\newtheorem{theorem}{Theorem}[section]
\newtheorem{lemma}[theorem]{Lemma}
\newtheorem{corollary}[theorem]{Corollary}
\newtheorem{question}[theorem]{Question}
\newtheorem{remark}[theorem]{Remark}
\newtheorem{proposition}[theorem]{Proposition}
\newtheorem{definition}[theorem]{Definition}
\newtheorem{example}[theorem]{Example}
\newtheorem{fact}[theorem]{Fact}
\numberwithin{equation}{section}
\newcommand{\CC}{C_k}
\newcommand{\NN}{\mathbb{N}}
\newcommand{\UU}{\mathcal{U}}
\newcommand{\SP}{\mathrm{sp}}
\newcommand{\VV}{\mathbb{V}}
\newcommand{\Nn}{\mathcal{N}}
\newcommand{\IR}{\mathbb{R}}
\newcommand{\II}{\mathbb{I}}
\renewcommand{\phi}{\varphi}
\newcommand{\supp}{\mathrm{supp}}
\newcommand{\conv}{\mathrm{conv}}
\title[Free topological vector spaces]{Free topological vector spaces}
\author[S. S.~Gabriyelyan]{Saak S. Gabriyelyan}
\address{Department of Mathematics, Ben-Gurion University of the Negev, Beer-Sheva, P.O. 653, Israel}
\email{saak@math.bgu.ac.il}
\author[S. A. Morris]{Sidney A. Morris}
\address{Faculty of Science and Technology, Federation University Australia, PO Box 663, Ballarat, Victoria, 3353,  Australia \& Department of Mathematics and Statistics,  La~Trobe University, Melbourne, Victoria, 3086, Australia}
\email{morris.sidney@gmail.com}
\subjclass[2000]{Primary 46A03; Secondary 54A25, 54D50}
\keywords{free topological vector space, free locally convex space}
\begin{document}

\begin{abstract}
We define and study the  free topological vector space $\VV(X)$ over a Tychonoff space $X$. We prove that $\VV(X)$ is a $k_\omega$-space if and only if    $X$ is a $k_\omega$-space. If $X$ is infinite, then $\VV(X)$ contains a closed  vector subspace which is topologically isomorphic to $\VV(\NN)$. It is proved that if $X$ is a $k$-space, then $\VV(X)$ is locally convex if and only if $X$ is discrete and countable. If $X$ is a metrizable space it is shown that: (1) $\VV(X)$ has countable tightness if and only if $X$ is separable, and (2) $\VV(X)$ is a $k$-space if and only if $X$ is locally compact and separable.  It is proved that $\VV(X)$ is a barrelled topological vector space if and only if $X$ is discrete.  This result is applied to free locally convex spaces $L(X)$ over a Tychonoff space $X$ by showing that: (1)  $L(X)$ is quasibarrelled if and only if $L(X)$ is barrelled if and only if $X$ is discrete, and (2) $L(X)$ is a Baire space if and only if  $X$ is finite.
\end{abstract}

%%%%%%%%%%%%%%%%%%%%%%%%%%%
%%%%%%%%%%%%%%%%%%%%%%%%%%%
%%%%%%%%%%%%%%%%%%%%%%%%%%%
%%%%%%%%%%%%%%%%%%%%%%%%%%%

\maketitle

%%%%%%%%%%%%%%%%%%%%%%%%%%%
%%%%%%%%%%%%%%%%%%%%%%%%%%%

\section{Introduction. }

%%%%%%%%%%%%%%%%%%%%%%%%%%%
%%%%%%%%%%%%%%%%%%%%%%%%%%%

Until recently almost all papers in topological vector spaces restricted themselves to locally convex spaces. However in recent years a number of questions about non-locally convex vector spaces have arisen.

All topological spaces are assumed here to be Tychonoff and all vector spaces are over the field of real numbers $\IR$.
The free  topological group $F(X)$, the free abelian topological group  $A(X)$ and the free locally convex space  $L(X)$ over a Tychonoff space $X$ were introduced by Markov \cite{Mar} and intensively studied over the last half-century, see for example \cite{ArT,Gra,Kakutani,MMO,Rai,Sipacheva}.
It has been known for half a century that the (Freyd) Adjoint Functor Theorem (\cite{MacLane} or Theorem A3.60 of \cite{HM}) implies the existence and uniqueness of  $F(X)$,  $A(X)$  and  $L(X)$.
This paper focuses on free topological vector spaces. One surprising fact is that  free topological vector spaces in some respect behave better than  free locally convex spaces.

%Recall that the {\it  free locally convex space}  $L(X)$ over a Tychonoff space $X$ is a pair consisting of a locally convex space (lcs, for short) $L(X)$ and  a continuous mapping $i: X\to L(X)$ such that every  continuous mapping $f$ from $X$ to a lcs $E$ gives rise to a unique continuous linear operator ${\bar f}: L(X) \to E$  with $f={\bar f} \circ i$. The free lcs $L(X)$  always exists and is  unique. The set $X$ forms a Hamel basis for $L(X)$, so $L(X)$ is the free (real) vector space $\VV_X$ over $X$ endowed with the strongest {\em locally convex} vector topology $\pmb{\nu}_X$ such that the mapping $i$ is continuous. Actually the mapping $i$ is a topological embedding  \cite{Flo1,Flo2,Rai,Usp}, and the identity map $id_X:X\to X$ extends to a canonical homomorphisms $id_{A(X)}:A(X)\to L(X)$, which is an embedding of topological groups \cite{Tkac, Usp2}, also  $i(X)$ and $A(X)$ are closed subsets of $L(X)$. It is interesting to note that Protasov \cite{Prot} (see also \cite{Gabr}) proved the following theorem: for an uncountable discrete space $D$, the topology $\pmb{\nu}_D$ of $L(D)$ is not the strongest {\em vector} (not necessarily locally convex) topology on $\VV_D$.

%Being motivated by the aforementioned results we define the free topological vector spaces.

%%%%%%%%%%%%%%%%%%%%%%%%%%%%%%%
%%%%%%%%%%%%%%%%%%%%%%%%%%%%%%%
%%%%%%%%%%%%%%%%%%%%%%%%%%%%%%%
%%%%%%%%%%%%%%%%%%%%%%%%%%%%%%%
%%%%%%%%%%%%%%%%%%%%%%%%%%%%%%%

\section{Basic properties of free topological vector spaces}

%%%%%%%%%%%%%%%%%%%%%%%%%%%%%%%
%%%%%%%%%%%%%%%%%%%%%%%%%%%%%%%
%%%%%%%%%%%%%%%%%%%%%%%%%%%%%%%
%%%%%%%%%%%%%%%%%%%%%%%%%%%%%%%
%%%%%%%%%%%%%%%%%%%%%%%%%%%%%%%

\begin{definition} \label{Def:FreeVS} {\em
The {\em free topological vector space} $\VV(X)$ over a Tychonoff space $X$ is a pair consisting of a topological vector space $\VV(X)$ and a continuous mapping $i=i_X: X\to \VV(X)$ such that every continuous mapping $f$ from $X$ to a topological vector space (tvs) $E$ gives rise to a unique continuous linear operator ${\bar f}: \VV(X) \to E$ with $f={\bar f} \circ i$.}
\end{definition}

In analogy with the Graev free abelian topological group  over a Tychonoff space $X$ with a distinguished point $p$, we can define the Graev free topological vector space $\VV_G(X,p)$ over $(X,p)$.
\begin{definition} \label{Def:FreeVS-Graev} {\em
The {\em Graev free topological vector space} $\VV_G(X,e)$ over a Tychonoff space $X$ with a distinguished point $e$ is a pair consisting of a topological vector space $\VV_G(X,e)$ and a continuous mapping $i=i_X: X\to \VV_G(X,e)$ such that $i(e)=0$ and every continuous mapping $f$ from $X$ to a topological vector space $E$ with $f(e)=0$ gives rise to a unique continuous linear operator ${\bar f}: \VV_G(X,e) \to E$ with $f={\bar f} \circ i$.}
\end{definition}

We shall use the notation: for a subset $A$ of a vector space $E$ and a natural number $n\in\NN$ we denote by $\SP_n(A)$ the following subset of $E$
\[
\SP_n(A):= \{ \lambda_1 x_1 +\cdots  + \lambda_n x_n : \; \lambda_i \in [-n,n], \;  x_i\in A, \; \forall i=1,\dots,n\},
\]
and set $\SP(A):=\bigcup_{n\in\NN} \SP_n(A)$, the span of $A$ in $E$.

As $X$ is a Tychonoff space, the mapping $i_X$ is an embedding. So we identify the space $X$ with $i(X)$ and regard $X$ as a subspace of $\VV(X)$.
\begin{theorem} \label{t:Free-exists}
Let $X$ be a  Tychonoff space and $e\in X$ a distinguished point. Then
\begin{enumerate}
\item[{\rm (i)}] $\VV(X)$ and  $\VV_G(X,e)$ exist (and are Hausdorff);
\item[{\rm (ii)}] $\SP(X)=\VV(X)$ and  $X$ is a vector space basis for $\VV(X)$;
\item[{\rm (iii)}] $\SP(X)=\VV_G(X,e)$ and  $X\setminus\{ e\}$ is a vector space basis for $\VV_G(X,e)$;
\item[{\rm (iv)}] $\VV(X)$  and  $\VV_G(X,e)$ are unique up to isomorphism of topological vector spaces;
\item[{\rm (v)}] $X$ is a closed subspace of $\VV(X)$  and  $\VV_G(X,e)$;
\item[{\rm (vi)}]  $\SP_n(X)$ is closed in $\VV(X)$  and  $\VV_G(X,e)$, for every $n\in\NN$;
\item[{\rm (vii)}] if $q:X\to Y$ is a quotient map of Tychonoff spaces $X$ and $Y$, then  $\VV(Y)$ is a quotient topological vector space of $\VV(X)$;
\item[{\rm (viii)}] if $Y$ is a Tychonoff space with a distinguished point $p$ and $X\wedge Y$ is the wedge sum of $(X,e)$ and $(Y,p)$, then  $\VV_G(X,e)\times \VV_G(Y,p) =\VV_G\big(X\wedge Y,(e,p)\big)$.
\end{enumerate}
\end{theorem}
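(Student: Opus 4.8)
The plan is to construct $\VV(X)$ explicitly so that (ii)--(iv) can be read off the construction, to reduce the genuinely topological content of (v)--(vi) to the free locally convex space $L(X)$, and to treat (vii)--(viii) as formal consequences of the universal property.

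For (i) I would take $F$ to be the abstract free real vector space with Hamel basis $X$ (finite formal combinations $\sum_j \lambda_j x_j$), with $i_X$ the inclusion of $X$ as basis vectors, and equip $F$ with the supremum $\tau^*$ of the family $\mathcal{T}$ of all vector topologies on $F$ for which $i_X$ is continuous. The routine checks are that $\mathcal{T}\neq\emptyset$ (the trivial topology), that the topological join of vector topologies is again a vector topology (take balanced neighbourhood bases at $0$ and halve), and that $\tau^*$ itself keeps $i_X$ continuous (preimages of subbasic neighbourhoods are finite intersections of $X$-neighbourhoods). Then $\tau^*=\max\mathcal{T}$ is the finest vector topology making $i_X$ continuous. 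The universal property follows because a continuous $f:X\to E$ into a tvs $E$ has a unique linear extension $\bar f:F\to E$, and the pullback vector topology $\sigma_f:=\bar f^{-1}(\tau_E)$ lies in $\mathcal{T}$ (since $\bar f\circ i_X=f$); hence $\sigma_f\le\tau^*$ and $\bar f$ is $\tau^*$-continuous, uniqueness being algebraic as $X$ spans $F$. For Hausdorffness, given $0\neq v=\sum_j\lambda_j x_j$ with distinct $x_j$, Tychonoff-ness yields a continuous $g:X\to\IR$ with $\sum_j\lambda_j g(x_j)\neq 0$, so $\bar g\in(F,\tau^*)'$ separates $v$ from $0$. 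Now (ii)--(iii) are immediate: $X$ (respectively $X\setminus\{e\}$, building $\VV_G(X,e)$ as the free vector space on $X\setminus\{e\}$ with $e\mapsto 0$) is a Hamel basis, and every element lies in some $\SP_n(X)$, so $\SP(X)=\VV(X)$; and (iv) is the standard uniqueness argument, since two pairs with the universal property induce mutually inverse extensions that fix the generating set.

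I expect (vi), and through it (v), to be the main obstacle, because $\tau^*$ has no transparent description of its convergent nets. My plan is to transfer closedness from $L(X)$. The identity on $F$ gives a continuous linear bijection $j:\VV(X)\to L(X)$ (apply the universal property of $\VV(X)$ to $i_X^{L}:X\to L(X)$), and since $j$ is the identity on the underlying vector space, $j^{-1}(\SP_n(X))=\SP_n(X)$ and $j^{-1}(X)=X$. Thus it suffices to know that $\SP_n(X)$ is closed in $L(X)$, which is the known behaviour of free locally convex spaces (finitely supported elements with bounded support and bounded coefficients form closed sets). Granting (vi), part (v) is quick: taking $f\equiv 1$, the functional $\bar f(\sum_j\lambda_j x_j)=\sum_j\lambda_j$ is continuous and $X=\SP_1(X)\cap\bar f^{-1}(\{1\})$, an intersection of closed sets. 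Should a self-contained proof be wanted, I would instead separate a point $w\notin\SP_n(X)$ from $\SP_n(X)$ via a continuous embedding of $X$ into a normed space together with the coefficient data, but the transfer through $j$ is cleaner.

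Parts (vii)--(viii) are then formal. For (vii), $q:X\to Y\hookrightarrow\VV(Y)$ is continuous and extends to a continuous linear surjection $\bar q:\VV(X)\to\VV(Y)$ (surjective as its image is a subspace containing the generating set $Y$); letting $\tau$ be the final vector topology on $\VV(Y)$ induced by $\bar q$ gives $\tau_{\VV(Y)}\subseteq\tau$, while the fact that $q$ is a quotient map makes $i_Y:Y\to(\VV(Y),\tau)$ continuous, so the universal property forces the identity $(\VV(Y),\tau_{\VV(Y)})\to(\VV(Y),\tau)$ to be continuous and hence $\tau=\tau_{\VV(Y)}$, i.e. $\bar q$ is a topological quotient map. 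For (viii), I would verify that $\VV_G(X,e)\times\VV_G(Y,p)$, with the natural map $x\mapsto(x,0)$, $y\mapsto(0,y)$, satisfies the universal property of $\VV_G(X\wedge Y,(e,p))$: a continuous base-point-preserving $f:X\wedge Y\to E$ restricts to such maps on $X$ and $Y$, extends to $\bar f_X,\bar f_Y$, and $(u,v)\mapsto\bar f_X(u)+\bar f_Y(v)$ is the required continuous linear extension, unique since $X\setminus\{e\}$ and $Y\setminus\{p\}$ together span the product. The conclusion, including that the product topology is exactly the free Graev topology on the wedge, then follows from the uniqueness in (iv).
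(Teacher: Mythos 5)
Your proposal is correct, but it reaches the theorem by a genuinely different route than the paper at three points. For (i)--(iv) the paper simply invokes the Adjoint Functor Theorem, whereas you build $\VV(X)$ concretely as the abstract free vector space on $X$ carrying the finest vector topology making $i_X$ continuous; your construction is longer but buys transparency, since (ii)--(iii) become definitional and the universal property follows from your pullback-topology observation, while the categorical argument leaves the identification of the underlying vector space implicit. For (v)--(vi) the paper argues directly inside the category of topological vector spaces: the canonical injection ${\bar \beta}\colon \VV(X)\to\VV(\beta X)$ is continuous and injective, $\beta X$ and $\SP_n(\beta X)$ are compact (hence closed) in the Hausdorff space $\VV(\beta X)$, and $X={\bar \beta}^{-1}(\beta X)$, $\SP_n(X)={\bar \beta}^{-1}\big(\SP_n(\beta X)\big)$. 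You instead factor through the continuous identity $j\colon \VV(X)\to L(X)$ and cite as known that $\SP_n(X)$ is closed in $L(X)$; this is the one place where you lean on an external fact rather than an argument, and while the fact is true and citable, its standard proof is exactly the same compactness argument run in $L(\beta X)$, so you could make your proof self-contained at no extra cost by running it in $\VV(\beta X)$ directly (the injection exists by the universal property you have already established). Your derivation of (v) from (vi) via the summing functional $\bar f$ with $f\equiv 1$ and $X=\SP_1(X)\cap \bar f^{-1}(\{1\})$ is a neat alternative to the paper's simultaneous treatment of both closedness claims. For (vii) the two proofs share the same idea---compare the free topology with the quotient topology using the universal property---and your packaging via the final vector topology, with the quotient-ness of $q$ used precisely to make $i_Y$ continuous, is if anything cleaner than the paper's explicit construction of the map $T\colon \VV(X)/H\to E$. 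For (viii) the paper argues categorically (the left adjoint preserves finite coproducts, and finite products coincide with coproducts in $\mathbf{TVS}$), while you verify the universal property of $\VV_G(X,e)\times\VV_G(Y,p)$ by hand; both are valid, and yours does not require knowing that finite products are coproducts in $\mathbf{TVS}$. One caveat you share with the paper: (v)--(vi) are only proved for $\VV(X)$, with the Graev case left implicit.
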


\begin{proof}
(i)-(iv) follow from the Adjoint Functor Theorem.

%(iii) Denote by $C_p(X)$ the space $C(X)$ of all continuous functions on $X$ endowed with the pointwise topology. If $X$ is a Tychonoff space, then the canonical mapping $t: X\to C_pC_p(X)$, $t(x)(f):=f(x)$ for $f\in C_p(X)$, is an embedding and $t(X)$ is closed by Proposition 0.5.9 of \cite{Arhangel}. So $i$ is an embedding and the image $i(X)$ of $X$ is closed in $\VV(X)$ by the definition of $\VV(X)$ and the Embedding Lemma, see Lemma 5 in Chapter 4 of \cite{Kelley}.

%(iv) It is clear that $sp(X)=\VV(X)$. Fix arbitrarily  distinct elements $x_1,\dots,x_n\in X$ and nonzero real numbers $\lambda_1,\dots, \lambda_n$. Let ${\bar t}: \VV(X)\to C_pC_p(X)$ be a unique continuous linear operator extending the canonical embedding $t$. Since $X$  is Tychonoff, the functions $\lambda_1{\bar t}(x_1),\dots, \lambda_n{\bar t}(x_n)$ are independent functions on $C_p(X)$. So $x_1,\dots,x_n$ are independent in $\VV(X)$. Thus $X$ is a basis in $\VV(X)$.

(v)-(vi) We consider only the case $\VV(X)$. Let $\beta X$ be the Stone-\v{C}ech compactification of $X$. Then, by the definition of free topological vector space, the natural map $\beta:X\to \beta X \subseteq \VV(\beta X)$ can be extended to a continuous injective linear operator ${\bar \beta}:\VV(X) \to \VV(\beta X)$. Since $\beta X$ and $\SP_n(\beta X)$ are compact subsets of $\VV(\beta X)$, $X={\bar \beta}^{-1}(\beta  X)$  and $\SP_n(X) ={\bar \beta}^{-1} \big( \SP_n(\beta X) \big)$ by the injectivity of ${\bar \beta}$, we obtain that $X$ and $\SP_n(X)$ are closed subsets of $\VV(X)$, for every $n\in\NN$.

(vii) Let ${\bar q}: \VV(X) \to \VV(Y)$ be a continuous linear operator extending $q$.  Set $H=\ker({\bar q})$ and let $j:\VV(X)\to \VV(X)/H$ be the quotient map. Denote by $f:\VV(X)/H \to \VV(Y)$ the induced continuous linear map. We have to show that the topology of $\VV(X)/H$ coincides with the topology of $\VV(Y)$.

Let $E$ be an arbitrary tvs and $t: Y\to E$ a continuous map. Then $q\circ t:X\to E$ is continuous. So there is a unique continuous extension $\overline{q\circ t}: \VV(X) \to E$. As $\VV(X)/H$ is algebraically $\VV(Y)$, we obtain the induced linear map $T: \VV(X)/H \to E$. Now if $U$ is open in $E$, then $V:=\overline{q\circ t}^{-1} (U)$ is open in $\VV(X)$ and hence $j(V)$ is open in $\VV(X)/H$. Since $T\big( j(V)\big)=\overline{q\circ t}(V)=U$, we obtain that $T$ is continuous. Finally the definition of free tvs implies that $\VV(X)/H$ is $\VV(Y)$.

(viii)  follows from  the Adjoint Functor Theorem since the left adjoint functor preserves (finite) coproducts.
\end{proof}
We shall  denote the topology of $\VV(X)$  by $\pmb{\mu}_X$. So $\VV(X) =(\VV_X, \pmb{\mu}_X)$, where $\VV_X$ is a vector space with a basis $X$:
\[
\VV_X :=\{ \lambda_1 x_1 +\cdots + \lambda_n x_n :\; n\in\NN, \lambda_i \in \IR, x_i\in X\}.
\]

If $v\in \VV(X)$ (or $v\in L(X)$) has a representation
\[
v=\lambda_1 x_1 +\cdots + \lambda_n x_n, \mbox{ where } \lambda_i \in \IR\setminus \{ 0\} \mbox{ and } x_i\in X \mbox{ are distinct},
\]
the set $\supp(v):=\{ x_1,\dots,x_n\}$ is called the {\em support} of the element $v$.

%\begin{proposition}\label{p:Free-Quot}
%Let $q:X\to Y$ be a quotient map. Then  $\VV(Y)$ is a quotient space of $\VV(X)$.
%\end{proposition}
%\begin{proof}
%Let ${\bar q}: \VV(X) \to \VV(Y)$ be a continuous linear operator extending $q$.  Set $H=\ker({\bar q})$ and let $j:\VV(X)\to \VV(X)/H$ be the quotient map. Denote by $f:\VV(X)/H \to \VV(Y)$ the induced map. Since $Y$ is a Hamel basis for $\VV(Y)$, the map ${\bar q}$ is surjective and, for every $x,z\in X$, we have ${\bar q}(x+H)={\bar q}(z+H)$ if and only if $q(x)=q(z)$. So we can define the map $s:Y\to \VV(X)/H$ by the formula (recall that $i_X:X\to\VV(X)$ is an embedding)
%\[
%\mbox{ if } y=q(x)\in Y, \mbox{ then } s(y)=j\big(i_X(x)\big).
%\]
%So $s\circ q=j\circ i_X$.

%We show that the map $s$ is continuous. Fix $y\in Y$ and a neighborhood $U+H$ of $s(y)$. Choose $x\in X$ such that $q(x)=y$ and a neighborhood $W$ of $x$ such that $j(i_X(W))\subseteq U+H$. Set $V:=q(W)$. Then $V$ is a neighborhood of $y$ such that $s(V)=s(q(W))=j(i_X(W))\subseteq U+H$. Thus the map $s$ is continuous.

%Finally, as $f$ is bijective, the definition of $\VV(Y)$ implies that $f$ is a linear topological isomorphism. Thus
%$\VV(Y)$ is a quotient space of $\VV(X)$.
%\end{proof}

%For a subspace $Z$ of a space $X$, let $\VV(Z,X)$, $L(Z,X)$ and $A(Z,X)$ be the subspaces of $\VV(X)$, $L(X)$ or $A(X)$  generated by $Z$, respectively.

For a subspace $Z$ of a space $X$, let $\VV(Z,X)$ be the vector subspace of $\VV(X)$ generated algebraically by $Z$.
\begin{lemma} \label{l:Free-subspace}
Let $Z$ be a closed subspace of a space $X$. Then $\VV(Z,X)$ is a closed subspace of $\VV(X)$.
\end{lemma}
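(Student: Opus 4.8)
The plan is to reduce the problem to taking the preimage of a one-dimensional subspace, by collapsing $Z$ inside the Stone--\v{C}ech compactification and invoking the universal property. First I would set $K := \mathrm{cl}_{\beta X}(Z)$, the closure of $Z$ in $\beta X$; since $\beta X$ is compact Hausdorff, $K$ is compact, and because $Z$ is closed in $X$ we have $X \cap K = \mathrm{cl}_X(Z) = Z$. Thus a point of $X$ lies in $K$ precisely when it lies in $Z$.

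Next I would collapse $K$ to a single point. Let $q : \beta X \to \beta X / K$ be the quotient map identifying $K$ to a point $*$ and injective on $\beta X \setminus K$. The space $\beta X / K$ is compact (a continuous image of a compact space) and Hausdorff: two distinct non-collapsed points are separated by saturated open sets obtained by deleting the closed set $K$ from ordinary separating neighbourhoods, while $*$ and any other point are separated using that the compact set $K$ and a point can be separated in the Hausdorff space $\beta X$. Hence $\beta X / K$ is Tychonoff, so $\VV(\beta X / K)$ is defined and Hausdorff by Theorem~\ref{t:Free-exists}(i). Composing the inclusion $X \hookrightarrow \beta X$ with $q$ gives a continuous map $p : X \to \beta X / K$ with $p^{-1}(*) = Z$ that is injective on $X \setminus Z$.

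Now I would push this up to the free spaces. By the universal property, $p$ extends to a continuous linear operator $\bar p : \VV(X) \to \VV(\beta X / K)$. The crucial claim is $\VV(Z,X) = \bar p^{\,-1}(\IR\cdot *)$. To see this, write an arbitrary $v \in \VV(X)$ in reduced form $v = \sum_{i=1}^n \lambda_i x_i$ with the $x_i \in X$ distinct and $\lambda_i \neq 0$; then $\bar p(v) = \big(\sum_{x_i \in Z}\lambda_i\big)\, * + \sum_{x_i \notin Z}\lambda_i\, p(x_i)$. Since $\beta X / K$ is a vector-space basis of $\VV(\beta X / K)$ by Theorem~\ref{t:Free-exists}(ii), the elements $*$ and $\{p(x_i) : x_i \notin Z\}$ are distinct basis vectors; hence $\bar p(v)$ is a multiple of $*$ if and only if no $x_i$ lies outside $Z$, i.e. if and only if $\supp(v)\subseteq Z$, which is exactly the condition $v \in \VV(Z,X)$.

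Finally, $\IR\cdot *$ is a one-dimensional subspace of the Hausdorff topological vector space $\VV(\beta X / K)$ and is therefore closed, so $\VV(Z,X) = \bar p^{\,-1}(\IR\cdot *)$ is closed by continuity of $\bar p$. I expect the one genuinely topological point, and the main obstacle, to be verifying that $\beta X / K$ is Hausdorff (so that $\VV$ applies to it); everything else is a formal consequence of the universal property together with the basis and support bookkeeping. An alternative to the last step, avoiding the finite-dimensional-subspace fact, is to post-compose with the canonical map $\VV(\beta X / K)\to \VV_G(\beta X/K,*)$ sending $*\mapsto 0$, whose composite with $\bar p$ is a continuous linear map into a Hausdorff space with kernel exactly $\VV(Z,X)$.
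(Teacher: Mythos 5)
Your proof is correct, but it takes a genuinely different route from the paper's. The paper argues pointwise by functional separation: given $h=\sum_{i=1}^n a_i x_i\notin \VV(Z,X)$ in reduced form, some $x_1\notin Z$, and since $X$ is Tychonoff there is a continuous $f:X\to\IR$ with $f(x_1)=1$ and $f\equiv 0$ on the closed set $Z\cup\{x_2,\dots,x_n\}$; lifting $f$ to a continuous linear functional $\bar f:\VV(X)\to\IR$ gives $\bar f\big(\VV(Z,X)\big)=\{0\}$ and $\bar f(h)=a_1\neq 0$, so $\bar f^{-1}(U)$, for $U$ a neighbourhood of $a_1$ missing $0$, is a neighbourhood of $h$ disjoint from $\VV(Z,X)$. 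That argument needs only the Tychonoff property and the universal property with target $\IR$ --- no compactification, no quotient space, no auxiliary tvs facts. You instead exhibit $\VV(Z,X)$ globally as the preimage of a single closed set under a single continuous linear map: collapse $K=\mathrm{cl}_{\beta X}(Z)$ to a point $*$, check that $\beta X/K$ is compact Hausdorff (hence Tychonoff), extend to $\bar p:\VV(X)\to\VV(\beta X/K)$, and use the basis property to see $\bar p^{-1}(\IR\cdot *)=\VV(Z,X)$, which is closed because finite-dimensional subspaces of a Hausdorff tvs are closed (or, in your alternative ending, because $\VV(Z,X)$ is the kernel of the composite into the Graev space $\VV_G(\beta X/K,*)$). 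The price is heavier machinery --- the Stone--\v{C}ech compactification, a Hausdorffness verification for the quotient, the cited finite-dimensionality fact --- plus one trivial caveat worth a sentence: when $Z=\emptyset$ there is no point to collapse, though then $\VV(Z,X)=\{0\}$ is obviously closed. What your approach buys is structure: $\VV(Z,X)$ is realized as a kernel of one continuous operator into a Hausdorff tvs, which is conceptually clean and recycles the same compactification technique the paper itself uses to prove Theorem \ref{t:Free-exists}(v)--(vi), whereas the paper's separation argument is shorter and more elementary, handling the points of the complement one at a time.
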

\begin{proof}
%As the identity map $\VV(X)\to L(X)$ is continuous, it is enough to show that $L(Z,X)$ is closed subspace of $L(X)$. We repeat the proof of Proposition 3.8 of \cite{STh}.
Assume that $h=\sum_{i=1}^n a_i x_i$ does not belong to $\VV(Z,X)$, where $a_i \not= 0$ and $x_i$ are distinct for all $i$. Then there is an index $i$, say $i=1$, such that $x_1 \not\in Z$. Since $X$ is Tychonoff, there is a function $f:X\to \IR$ such that $f(x_1)=1$ and $f\big( Z\cup \{ x_2,\dots,x_n\} \big) =0$. Lift this function to a linear mapping ${\bar f}: \VV(X) \to \IR$. Now ${\bar f}\big(\VV(Z,X)\big)=0$ and ${\bar f}(h)=a_1 f(x_1)=a_1 \not=0$. If $U$ is an open neighborhood of $a_1$ not containing zero, then ${\bar f}^{-1}(U)$ is an open neighborhood of $h$ which does not intersect $\VV(Z,X)$.
\end{proof}

\begin{proposition} \label{p:Free-retract}
If $X$ is a Tychonoff space and $Z$ is a retract (in particular, a clopen subset) of $X$, then $\VV(Z)$ embeds onto a closed vector subspace of $\VV(X)$.
\end{proposition}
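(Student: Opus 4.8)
The plan is to use the retraction $r\colon X\to Z$ together with the inclusion $\iota\colon Z\hookrightarrow X$ to produce continuous linear maps between the two free topological vector spaces in both directions, and then to read off from their composition that one of them is a topological embedding whose range is closed.

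First I would apply the universal property of Definition~\ref{Def:FreeVS} twice. The inclusion $\iota$ is continuous, so it lifts to a continuous linear operator $\bar\iota\colon\VV(Z)\to\VV(X)$; on the generators $\bar\iota$ is just the identity, so its image is exactly the algebraically generated subspace $\VV(Z,X)$. Likewise the retraction $r$ is continuous and lifts to a continuous linear operator $\bar r\colon\VV(X)\to\VV(Z)$.

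Next I would compute the composite $\bar r\circ\bar\iota\colon\VV(Z)\to\VV(Z)$. On the generating set $Z$ it acts by $z\mapsto r(\iota(z))=r(z)=z$, since $r$ is a retraction; thus $\bar r\circ\bar\iota$ and $\mathrm{id}_{\VV(Z)}$ are two continuous linear operators that both restrict to the inclusion $Z\to\VV(Z)$, so by the uniqueness clause of the universal property they coincide, i.e.\ $\bar r\circ\bar\iota=\mathrm{id}_{\VV(Z)}$. In particular $\bar\iota$ is injective, and the restriction of $\bar r$ to $\VV(Z,X)=\bar\iota(\VV(Z))$ is a continuous two-sided inverse of $\bar\iota$ on that image. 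Hence $\bar\iota\colon\VV(Z)\to\VV(Z,X)$ is a continuous linear bijection whose inverse is a restriction of the continuous map $\bar r$, that is, a topological isomorphism onto $\VV(Z,X)$.

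It then remains to check that $\VV(Z,X)$ is closed in $\VV(X)$. Since $X$ is Tychonoff, hence Hausdorff, the retract $Z$ coincides with the equalizer $\{x\in X:\,r(x)=x\}$ and is therefore closed in $X$, so Lemma~\ref{l:Free-subspace} gives that $\VV(Z,X)$ is closed, completing the argument. The parenthetical clopen case is subsumed because a nonempty clopen subset $Z$ is a retract via $r(x)=x$ for $x\in Z$ and $r(x)=z_0$ for $x\in X\setminus Z$, with $z_0\in Z$ fixed. The one point I would watch carefully is the step from ``continuous algebraic bijection'' to ``topological isomorphism'': this is precisely where the retraction is indispensable, since a continuous linear bijection of topological vector spaces need not be open, and it is the continuity of $\bar r$ that supplies the inverse at no extra cost.
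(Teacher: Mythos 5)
Your proof is correct and follows essentially the same route as the paper: lift the retraction to a continuous linear operator $\bar r\colon\VV(X)\to\VV(Z)$ that inverts the lifted inclusion on $\VV(Z,X)$, conclude that $\VV(Z)\to\VV(Z,X)$ is a topological isomorphism, and invoke Lemma~\ref{l:Free-subspace} for closedness. The only cosmetic difference is that the paper obtains $\bar r$ from Theorem~\ref{t:Free-exists}(vii) and phrases the conclusion as an equality of the subspace topology with $\pmb{\mu}_Z$, while you derive $\bar r\circ\bar\iota=\mathrm{id}_{\VV(Z)}$ directly from the uniqueness clause of the universal property (and, commendably, you make explicit both that a retract of a Hausdorff space is closed and that clopen sets are retracts, points the paper leaves tacit).
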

\begin{proof}
Let $p:X\to Z$ be a retraction. Then $\VV(Z)$ is a quotient topological vector space of $\VV(X)$ under an extension ${\bar p}$ of $p$ by Theorem \ref{t:Free-exists}(vii). As $p(z)=z$ we obtain that ${\bar p}$ is injective on $\VV(Z,X)$ and ${\bar p}(\VV(Z,X))=\VV(Z)$. So the topology $\tau$ of $\VV(Z,X)$ is finer than the topology $\pmb{\mu}_Z$ of $\VV(Z)$. Now the definition of $\pmb{\mu}_Z$ implies that $\tau = \pmb{\mu}_Z$. Thus $\VV(Z)$ embeds onto $\VV(Z,X)$. Finally $\VV(Z,X)$ is a closed vector subspace of $\VV(X)$ by Lemma \ref{l:Free-subspace}.
\end{proof}

\begin{corollary} \label{c:Free-retract}
If $X= Y \cup Z$ is a disjoint union of Tychonoff  spaces $Y$ and $Z$, then $\VV(X) = \VV(Y)\oplus \VV(Z)$.
\end{corollary}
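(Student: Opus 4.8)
The plan is to avoid arguing directly about the topology of $\VV(X)$ and instead verify that the topological direct sum $\VV(Y)\oplus\VV(Z)$, equipped with a suitable map from $X$, satisfies the universal property in Definition \ref{Def:FreeVS}; the identification $\VV(X)=\VV(Y)\oplus\VV(Z)$ will then follow from the uniqueness statement of Theorem \ref{t:Free-exists}(iv). Since $X$ is the disjoint (topological) union of $Y$ and $Z$, both summands are clopen in $X$, so I may define a map $i:X\to\VV(Y)\oplus\VV(Z)$ by $i(x)=(i_Y(x),0)$ for $x\in Y$ and $i(x)=(0,i_Z(x))$ for $x\in Z$, where $i_Y,i_Z$ are the canonical embeddings. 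Because $i$ is continuous on each of the two clopen pieces, it is continuous on $X$, and $\VV(Y)\oplus\VV(Z)$ is a topological vector space as a finite product of such spaces.

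First I would check the factorization property. Given a continuous map $f:X\to E$ into a tvs $E$, the restrictions $f|_Y$ and $f|_Z$ are continuous, so by the freeness of $\VV(Y)$ and $\VV(Z)$ they lift to unique continuous linear operators $g_Y:\VV(Y)\to E$ and $g_Z:\VV(Z)\to E$ with $g_Y\circ i_Y=f|_Y$ and $g_Z\circ i_Z=f|_Z$. Setting $\bar f(u,w):=g_Y(u)+g_Z(w)$ defines a continuous linear operator $\bar f:\VV(Y)\oplus\VV(Z)\to E$, and a direct check on the two pieces gives $\bar f\circ i=f$.

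Next I would verify uniqueness of the lift. The point here is that $i(X)$ contains $\{(y,0):y\in Y\}$ and $\{(0,z):z\in Z\}$, and by Theorem \ref{t:Free-exists}(ii) the sets $Y$ and $Z$ are vector-space bases of $\VV(Y)$ and $\VV(Z)$; hence $i(X)$ spans $\VV(Y)\oplus\VV(Z)$ algebraically. Any continuous linear operator agreeing with $\bar f$ on a spanning set must equal $\bar f$, so the lift is unique. Therefore the pair $\big(\VV(Y)\oplus\VV(Z),i\big)$ satisfies Definition \ref{Def:FreeVS}, and Theorem \ref{t:Free-exists}(iv) yields the topological isomorphism $\VV(X)=\VV(Y)\oplus\VV(Z)$.

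Regarding difficulties: the universal-property route makes the argument almost formal, so the only genuinely topological input is that the assembled map $i$ is continuous, which relies on $Y$ and $Z$ being clopen in $X$. An alternative would be to use Proposition \ref{p:Free-retract} to embed $\VV(Y)$ and $\VV(Z)$ as complementary closed subspaces $\VV(Y,X)$ and $\VV(Z,X)$ of $\VV(X)$ and then show the canonical projections are continuous; but checking that the product topology is recovered by hand in that approach is exactly the step the uniqueness theorem lets me bypass, so I would prefer the universal-property argument.
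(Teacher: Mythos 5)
Your proof is correct, but it takes a genuinely different route from the paper's. The paper offers no separate argument for this corollary: it is presented as an immediate consequence of Proposition \ref{p:Free-retract}, the point being that each of the clopen pieces $Y$ and $Z$ is a retract of $X$, so $\VV(Y)$ and $\VV(Z)$ embed onto the closed, algebraically complementary vector subspaces $\VV(Y,X)$ and $\VV(Z,X)$ of $\VV(X)$, after which the two topologies are compared; the same machinery is then pushed further in Proposition \ref{p:Free-coproduct} to identify $\VV\big(\bigcup_{i\in I}X_i\big)$ with the coproduct of the $\VV(X_i)$ for an arbitrary index set. You instead bypass Proposition \ref{p:Free-retract} entirely: you equip the topological product $\VV(Y)\oplus\VV(Z)$ with the assembled map $i$, verify the factorization and uniqueness clauses of Definition \ref{Def:FreeVS} directly, and invoke the uniqueness statement of Theorem \ref{t:Free-exists}(iv). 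Your verification is sound --- continuity of $i$ uses exactly the clopenness of $Y$ and $Z$, the lift $\bar f(u,w)=g_Y(u)+g_Z(w)$ is continuous and linear, and uniqueness needs only that $i(X)$ spans, which Theorem \ref{t:Free-exists}(ii) gives. The trade-off is roughly this: your universal-property argument is self-contained and almost formal, and it quietly exploits the fact that for \emph{finitely many} summands the product and the coproduct in $\mathbf{TVS}$ coincide, so no topology comparison is needed; the paper's route yields strictly more information (the copies of $\VV(Y)$ and $\VV(Z)$ are \emph{closed} subspaces of $\VV(X)$, via Lemma \ref{l:Free-subspace}) and is the version that scales to infinite disjoint sums, where the coproduct topology is no longer the product topology and the embedding result of Proposition \ref{p:Free-retract} becomes genuinely necessary, as in the proof of Proposition \ref{p:Free-coproduct}.
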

Below we generalize Corollary \ref{c:Free-retract}. First we recall some definitions.

For a non-empty family $\{ E_i \}_{i\in I}$  of vector spaces, the \emph{direct sum} of $E_i$ is denoted by
\[
\bigoplus_{i\in I} E_i :=\left\{ (x_i)_{i\in I} \in \prod_{i\in I} E_i : \; x_i = 0_i \mbox{ for all but a finite number of } i \right\},
\]
and we  denote by $j_k $ the natural embedding of $E_k$ into $\bigoplus_{i\in I} E_i$; that is,
\[
j_k (x)=(x_i)\in \bigoplus_{i\in I} E_i, \mbox{ where } x_i =x  \mbox{ if } i=k \mbox{ and } x_i =0_i  \mbox{ if } i\not= k.
\]
If $\{ E_i \}_{i\in I}$ is  a non-empty family  of topological vector spaces {\it the final vector space topology} $\mathcal{T}_f$ on $\bigoplus_{i\in I} E_i$  with respect to the family of canonical homomorphisms $j_k : E_k \to \bigoplus_{i\in I} E_i$  is the finest vector space topology on $\bigoplus_{i\in I} E_i$ such that all $j_k$ are continuous.

\begin{definition} \label{defCoproduct}{\em
Let $\mathcal{E}=\{ (E_i, \mathcal{T}_i) \}_{i\in I}$ be a non-empty  family of topological vector spaces. The topological vector space $(E,\mathcal{T})$ is the  \emph{coproduct} of the family $\mathcal{E}$ in the category  $\mathbf{TVS}$ of topological vector spaces and continuous linear operators if
\begin{enumerate}
\item[{\rm (i)}] for each $i\in I$ there is an embedding $j_i: E_i \to E$;
\item[{\rm (ii)}] for any tvs $V$ and each family $\{ p_i \}_{i\in I}$ of continuous linear mappings $p_i: E_i \to V$, there exists a unique continuous linear mapping $p:E\to V$ such that $p_i =p\circ j_i$ for every $i\in I$.
\end{enumerate}}
\end{definition}
The underlying vector space structure of the coproduct $(E,\mathcal{T})$  is the direct sum  $\bigoplus_{i\in I} E_i$. The  \emph{coproduct topology}  $\mathcal{T}$ on $E$ coincides with  the final  vector space  topology $\mathcal{T}_f$  with respect to the family of canonical homomorphisms $j_i : E_i\to E$. Note that a coproduct of a family of tvs is  unique up to topological linear isomorphism. If $I$ is countable, then the coproduct topology $\mathcal{T}$ on $E$ is the subspace topology on $E$ induced by the box topology on the product  $\prod_{i\in I} E_i$.

\begin{proposition} \label{p:Free-coproduct}
Let $X=\bigcup_{i\in I} X_i$ be a disjoint sum of a nonempty family $\{ X_i: i\in I\}$ of  Tychonoff  spaces. Then $\VV(X)$ is topologically isomorphic with the coproduct $(E,\mathcal{T})$ of the family $\{ \VV(X_i): i\in I\}$. If the set $I$ is countable, then the topology $\mathcal{T}$ is the subspace topology on the direct sum induced by the box topology on the product $\prod_{i\in I} \VV(X_i)$.
\end{proposition}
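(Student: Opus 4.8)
The plan is to verify that the coproduct $(E,\mathcal{T})$, equipped with a suitable structure map, satisfies the universal property of Definition \ref{Def:FreeVS}; uniqueness of the free topological vector space (Theorem \ref{t:Free-exists}(iv)) then identifies it with $\VV(X)$. Conceptually this is just the assertion that the left adjoint functor $X\mapsto\VV(X)$ preserves coproducts, exactly as in the finite case already recorded in Theorem \ref{t:Free-exists}(viii); the only real work is to spell this out for an arbitrary index set $I$, since here the coproduct topology $\mathcal{T}$ has been described concretely rather than invoked abstractly.

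First I would fix the structure map. For each $i$ let $i_{X_i}\colon X_i\to\VV(X_i)$ be the canonical map and $j_i\colon\VV(X_i)\to E$ the coproduct embedding of Definition \ref{defCoproduct}(i), and define $i_X\colon X\to E$ by $i_X|_{X_i}=j_i\circ i_{X_i}$. Because $X$ carries the disjoint-sum topology, a map out of $X$ is continuous precisely when its restriction to each $X_i$ is continuous, so $i_X$ is continuous. Algebraically both $\VV(X)$ and the underlying space $E=\bigoplus_{i\in I}\VV(X_i)$ have the disjoint union $X=\bigcup_{i\in I}X_i$ as a vector space basis (applying Theorem \ref{t:Free-exists}(ii) to each factor), so $i_X$ is injective and identifies the two underlying vector spaces; only the equality of topologies remains to be seen, and this is encoded in the universal property.

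Next I would check that property by chaining the two universal properties at hand. Given a tvs $V$ and a continuous $f\colon X\to V$, each restriction $f_i:=f|_{X_i}$ is continuous, so Definition \ref{Def:FreeVS} applied to $\VV(X_i)$ yields a unique continuous linear $\overline{f_i}\colon\VV(X_i)\to V$ with $f_i=\overline{f_i}\circ i_{X_i}$. The family $\{\overline{f_i}\}_{i\in I}$ then feeds into the coproduct universal property (Definition \ref{defCoproduct}(ii)), producing a unique continuous linear $\bar f\colon E\to V$ with $\overline{f_i}=\bar f\circ j_i$ for all $i$. Restricting to each $X_i$ gives $\bar f\circ i_X|_{X_i}=\bar f\circ j_i\circ i_{X_i}=\overline{f_i}\circ i_{X_i}=f_i$, hence $\bar f\circ i_X=f$; and uniqueness of $\bar f$ follows by running the two uniqueness clauses in reverse (any competitor $g$ restricts to maps $g\circ j_i$ that must equal $\overline{f_i}$, forcing $g=\bar f$). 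Thus $(E,\mathcal{T})$ together with $i_X$ satisfies Definition \ref{Def:FreeVS}, so $\VV(X)$ is topologically isomorphic to $(E,\mathcal{T})$. The final assertion for countable $I$ is then immediate from the description of $\mathcal{T}$ as the box-topology subspace topology recorded after Definition \ref{defCoproduct}.

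As for difficulty, there is no genuine analytic obstacle, since the statement is a formal consequence of the adjunction. The one point deserving care is the continuity of $i_X$ and of the assembled map $\bar f$: both rest on the defining properties of the disjoint-sum topology on the domain $X$ and of the final (coproduct) topology on $E$, and the argument would fail if $X$ did not carry the disjoint-sum topology. The verifications that $\bar f\circ i_X=f$ and that $\bar f$ is unique are pure diagram chases, which I would keep brief.
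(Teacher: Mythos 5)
Your proof is correct, but it runs along a different track than the paper's. The paper fixes the common underlying vector space $\VV_X=\bigoplus_{i\in I}\VV_{X_i}$ and proves the \emph{equality of two topologies} on it: the identity map $(E,\mathcal{T})\to\VV(X)$ is continuous because $\mathcal{T}$ is final with respect to the $j_i$ and each inclusion $p_i\colon\VV(X_i)\to\VV(X)$ is continuous (the paper cites Proposition \ref{p:Free-retract}, which gives the stronger fact that $p_i$ is an embedding), while the reverse continuity follows from the universal property of $\pmb{\mu}_X$ applied to the continuous map $X\to(E,\mathcal{T})$. You instead verify that $(E,\mathcal{T})$, with the structure map $i_X$ assembled from the $j_i\circ i_{X_i}$, itself satisfies Definition \ref{Def:FreeVS}, by chaining the factor-wise universal properties with the coproduct's universal property, and then invoke uniqueness (Theorem \ref{t:Free-exists}(iv)); your uniqueness chase for the extension $\bar f$ is also correct. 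What the two approaches buy is slightly different: yours is the pure ``left adjoints preserve coproducts'' argument (the same abstract reasoning the paper uses for the finite Graev case in Theorem \ref{t:Free-exists}(viii)), needs no appeal to Proposition \ref{p:Free-retract}, and never mentions the topologies at all; the paper's argument is more concrete in that the isomorphism it produces is visibly the identity on $\VV_X$, so one reads off directly that the canonical inclusions $\VV(X_i)\to\VV(X)$ are the coproduct injections and are embeddings. Both arguments handle arbitrary $I$, and both obtain the countable case from the box-topology description recorded after Definition \ref{defCoproduct}.
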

\begin{proof}
It is clear that the underlying vector spaces of $\VV(X)$ and $(E,\mathcal{T})$ is the direct sum  $\VV_X =\bigoplus_{i\in I} \VV_{X_i}$. Let $id_{\VV_X}: (E,\mathcal{T}) \to \VV(X)$ be the identity map and note that the inclusion $p_i: \VV(X_i) \to \VV(X)$ is an embedding by Proposition \ref{p:Free-retract}, for every $i\in I$. So, by the definition of coproduct topology, the map $id_{\VV_X}$ is continuous. Now the definition of the free  vector space  topology shows that $\pmb{\mu}_X =\mathcal{T}$.
\end{proof}

The next proposition shows that the Graev free topological vector space does not depend on the distinguished point. This we prove analogously to Theorem 2 of \cite{Gra}.
\begin{proposition} \label{p:Free-Graev-1}
Let $X$ be a Tychonoff space and $e,p \in X$ be distinct points. Then $\VV_G(X,e)$ and $\VV_G(X,p)$ are topologically isomorphic.
\end{proposition}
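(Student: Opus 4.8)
The plan is to build, via the defining universal property, two continuous linear operators between $\VV_G(X,e)$ and $\VV_G(X,p)$ and to show that they are mutually inverse. Write $i_e\colon X\to\VV_G(X,e)$ and $i_p\colon X\to\VV_G(X,p)$ for the two canonical continuous maps, so that $i_e(e)=0$ and $i_p(p)=0$. The essential device is to compose the canonical map into one space with a translation that sends the \emph{other} distinguished point to the origin; this is exactly what is needed in order to apply the universal property, since a map is liftable precisely when it carries the distinguished point to $0$.

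Concretely, I would first define $f\colon X\to\VV_G(X,p)$ by $f(x):=i_p(x)-i_p(e)$. This map is continuous, being the difference of the continuous map $i_p$ and a constant, and it satisfies $f(e)=i_p(e)-i_p(e)=0$. Hence, by Definition \ref{Def:FreeVS-Graev} applied to $\VV_G(X,e)$, there is a unique continuous linear operator $\bar f\colon\VV_G(X,e)\to\VV_G(X,p)$ with $\bar f\circ i_e=f$. Symmetrically, I would define $g\colon X\to\VV_G(X,e)$ by $g(x):=i_e(x)-i_e(p)$; then $g$ is continuous and $g(p)=0$, so it lifts to a unique continuous linear operator $\bar g\colon\VV_G(X,p)\to\VV_G(X,e)$ with $\bar g\circ i_p=g$.

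It then remains to verify that the two composites are identities. Using linearity of $\bar g$ together with the two lifting relations, I would compute, for $x\in X$,
\[
(\bar g\circ\bar f)(i_e(x))=\bar g\big(i_p(x)-i_p(e)\big)=g(x)-g(e)=\big(i_e(x)-i_e(p)\big)-\big(i_e(e)-i_e(p)\big)=i_e(x),
\]
where the final equality uses $i_e(e)=0$. Thus the continuous linear operator $\bar g\circ\bar f\colon\VV_G(X,e)\to\VV_G(X,e)$ and the identity operator both restrict to $i_e$ on $X$, so both are continuous linear lifts of the map $i_e$ (which sends $e$ to $0$); by the uniqueness clause of Definition \ref{Def:FreeVS-Graev} they coincide, giving $\bar g\circ\bar f=\mathrm{id}_{\VV_G(X,e)}$. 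The computation for $\bar f\circ\bar g=\mathrm{id}_{\VV_G(X,p)}$ is entirely symmetric, using $i_p(p)=0$. Therefore $\bar f$ is the desired topological linear isomorphism, with inverse $\bar g$.

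The argument is essentially formal, so I do not expect a serious obstacle. The only point requiring care is the choice of the translating constants $-i_p(e)$ and $-i_e(p)$: they are forced by the requirement that the distinguished point map to $0$ (so that the universal property becomes applicable) and, at the same time, they are exactly what makes the constants cancel when the two lifts are composed, as the displayed computation shows. Alternatively, one could bypass the uniqueness clause and conclude $\bar g\circ\bar f=\mathrm{id}$ purely algebraically, since by Theorem \ref{t:Free-exists}(iii) the set $X\setminus\{e\}$ is a vector space basis of $\VV_G(X,e)$ and two linear maps agreeing on a basis are equal.
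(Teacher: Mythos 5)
Your proposal is correct and follows essentially the same route as the paper: both construct the maps $x\mapsto i_p(x)-i_p(e)$ and $x\mapsto i_e(x)-i_e(p)$, lift them via the universal property, and verify by the same cancellation computation that the composites agree with the identity on $X$, concluding either by uniqueness of lifts or by the fact that $X$ generates the space. The only cosmetic difference is that you keep the canonical maps $i_e,i_p$ explicit where the paper identifies $X$ with its image.
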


\begin{proof}
Define $\phi:(X,e)\to \VV_G(X,p)$ setting $\phi(x):=x-e$, where ``$-$'' denotes difference in $\VV_G(X,p)$. Then $\phi$ is continuous and $\phi(e)=0$. So there is a continuous linear map ${\bar \phi}: \VV_G(X,e) \to \VV_G(X,p)$ which extends $\phi$. Analogously, we define  $\psi:(X,p)\to \VV_G(X,e)$ setting $\psi(x):=x-p$. Then $\psi$ is continuous and $\psi(p)=0$. So there is a continuous linear map ${\bar \psi}: \VV_G(X,p) \to \VV_G(X,e)$ which extends $\psi$. Now, for every $x\in (X,e)$, we have
\[
{\bar \psi} {\bar \phi} (x)={\bar \psi}(x-e)={\bar \psi}(x) -{\bar \psi}(e)= \psi(x)-\psi(e)=(x-p)-(e-p)=x-e=x,
\]
since $e$ is the identity in the space $\VV_G(X,e)$. Since $X$ generates $\VV_G(X,e)$, we obtain that ${\bar \psi} {\bar \phi}$ is the identity map of $\VV_G(X,e)$. Analogously, ${\bar \phi} {\bar \psi}$ is the identity map of $\VV_G(X,p)$. Thus $\VV_G(X,e)$ and $\VV_G(X,p)$ are topologically isomorphic.
\end{proof}
So we can write $\VV_G(X,e)=\VV_G(X)$.

Our next result is analagous to Theorem 3 of \cite{Morris}.

\begin{proposition} \label{p:Free-Graev-2}
Let $X$ be a Tychonoff space and $\{ e\}$ be a singleton. Then
\[
\VV(X)=\VV_G\big(X\sqcup \{ e\}\big)=\VV_G(X)\times \IR.
\]
\end{proposition}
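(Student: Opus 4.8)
The plan is to establish the two equalities separately: first $\VV(X)=\VV_G(X\sqcup\{e\})$ by comparing universal properties, and then $\VV_G(X\sqcup\{e\})=\VV_G(X)\times\IR$ by realizing $X\sqcup\{e\}$ as a wedge sum and invoking Theorem \ref{t:Free-exists}(viii). Throughout one may assume $X\neq\emptyset$, since otherwise the statement is trivial.

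For the first equality, I would use the canonical map $i:X\sqcup\{e\}\to\VV_G(X\sqcup\{e\})$ (with $i(e)=0$) and show that its restriction $i|_X:X\to\VV_G(X\sqcup\{e\})$ turns $\VV_G(X\sqcup\{e\})$ into the free topological vector space over $X$. Given a continuous $f:X\to E$ into a tvs $E$, I extend it to $\hat f:X\sqcup\{e\}\to E$ by setting $\hat f(e)=0$; since $e$ is isolated, $\hat f$ is continuous and satisfies $\hat f(e)=0$, so Definition \ref{Def:FreeVS-Graev} yields a unique continuous linear extension $\overline{\hat f}$ with $\overline{\hat f}\circ(i|_X)=f$. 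Uniqueness among continuous linear extensions of $f$ follows because $X=(X\sqcup\{e\})\setminus\{e\}$ is a vector space basis of $\VV_G(X\sqcup\{e\})$ by Theorem \ref{t:Free-exists}(iii), so such a map is determined by its values on $X$. Thus $(\VV_G(X\sqcup\{e\}),i|_X)$ satisfies Definition \ref{Def:FreeVS}, and uniqueness of free objects (Theorem \ref{t:Free-exists}(iv)) gives $\VV(X)=\VV_G(X\sqcup\{e\})$.

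For the second equality, I fix a point $p\in X$ and let $B=\{q,e\}$ be the two-point discrete space with distinguished point $q$. Gluing $q$ to $p$ exhibits $X\sqcup\{e\}$ (with $e$ an isolated point and $p$ the glued point) as the wedge sum $X\wedge B$ of $(X,p)$ and $(B,q)$. Theorem \ref{t:Free-exists}(viii) then gives $\VV_G\big(X\wedge B,(p,q)\big)=\VV_G(X,p)\times\VV_G(B,q)$. Since $B\setminus\{q\}=\{e\}$, the space $\VV_G(B,q)$ is a one-dimensional Hausdorff tvs (Theorem \ref{t:Free-exists}(i)), hence topologically isomorphic to $\IR$; and $\VV_G(X,p)=\VV_G(X)$ by Proposition \ref{p:Free-Graev-1}. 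Finally, Proposition \ref{p:Free-Graev-1} (independence of the distinguished point) lets me replace the base point $p$ of $X\wedge B=X\sqcup\{e\}$ by $e$, so that $\VV_G\big(X\wedge B,(p,q)\big)=\VV_G(X\sqcup\{e\})$. Combining these identifications yields $\VV_G(X\sqcup\{e\})=\VV_G(X)\times\IR$.

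The routine universal-property bookkeeping in the first equality is straightforward; I expect the point requiring the most care to be the second equality, specifically presenting $X\sqcup\{e\}$ correctly as a wedge sum so that Theorem \ref{t:Free-exists}(viii) applies, and then reconciling the two choices of distinguished point via Proposition \ref{p:Free-Graev-1}. The computation $\VV_G(\{q,e\},q)\cong\IR$ rests only on the standard fact that a finite-dimensional Hausdorff topological vector space over $\IR$ is topologically isomorphic to Euclidean space.
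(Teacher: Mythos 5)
Your proof is correct and follows essentially the same route as the paper's: the first equality is verified by checking that $\VV_G(X\sqcup\{e\},e)$ with the restricted canonical map satisfies the universal property of Definition \ref{Def:FreeVS}, and the second by writing $X\sqcup\{e\}$ as a wedge sum of $(X,p)$ with a two-point space, applying Theorem \ref{t:Free-exists}(viii), identifying the Graev free tvs on the two-point space with $\IR$, and using Proposition \ref{p:Free-Graev-1} to change the base point. Your version is slightly more explicit than the paper's (e.g.\ noting the continuity of the extension $\hat f$ via the isolatedness of $e$, and the uniqueness of $\overline{\hat f}$ via Theorem \ref{t:Free-exists}(iii)), but the underlying argument is the same.
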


\begin{proof}
Let $\phi: X\to E$ be a continuous map into a  topological vector space $E$. Define $i:X\to X\sqcup \{ e\}$ by $i(x):=x$ and $\psi: X\sqcup \{ e\} \to E$ by $\psi(x):=x$ and $\psi(e):=0$. Then we obtain the following commutative diagram
\[
\xymatrix{
X  \ar@{->}[d]_i \ar@{->}[r]^\phi & E \\
X \sqcup \{ e\} \ar@{->}[ru]^\psi \ar@{->}[r] & \VV_G\big( X\sqcup \{ e\}, e\big) \ar@{->}[u]^{{\bar \psi}} }
\]
where $\bar \psi$ is a linear continuous extension of $\psi$. Therefore the space $H:=\VV_G\big( X\sqcup \{ e\}, e\big)$ has the universal property, and hence $\VV(X)=H$ by the uniqueness of the free tvs over $X$. This proves the first equality.

Fix arbitrarily a point $p\in X$.
The second equality follows from the first equality, Theorem  \ref{t:Free-exists}, Proposition \ref{p:Free-Graev-1} and the following chain of equalities
\[
\begin{split}
\VV_G(X,p)\times\IR & = \VV_G(X,p)\times \VV_G(\{ e,p\}, p) =\VV_G \big( (X,p)\wedge (\{ e,p\}, p) \big) \\
& =\VV_G\big(X\sqcup \{ e\}, p\big)=\VV_G\big(X\sqcup \{ e\}, e\big)=\VV(X).
\end{split}
\]
\end{proof}

Graev \cite{Gra} used this to show that non-homeomorphic Tychonoff spaces $X$ and $Y$ may have isomorphic the free topological groups $A(X)$ and $A(Y)$. The same holds for free topological vector spaces.
\begin{example} {\em
Let $X=[0,1]$, $Y=[1,2]$, $e=1/2\in X$ and $p=1\in X\cap Y$. Then $(X,p)\wedge (Y,p)=([0,2],p)$ is an interval which is not homeomorphic to the space $Z:=(X,e)\wedge (Y,p)$. However, the Graev free topological vector spaces $\VV_G([0,2])$ and $\VV_G\big( Z\big)$ are topologically isomorphic by Theorem  \ref{t:Free-exists} and Proposition \ref{p:Free-Graev-1} since
\[
\VV_G([0,2])=\VV_G(X,p)\times \VV_G(Y,p)= \VV_G(X,e) \times \VV_G(Y,p)=\VV_G(Z).
\]   }
\end{example}

%%%%%%%%%%%%%%%%%%%%%%%%%%%%%%%%%%%%%%%
%%%%%%%%%%%%%%%%%%%%%%%%%%%%%%%%%%%%%%%
%%%%%%%%%%%%%%%%%%%%%%%%%%%%%%%%%%%%%%%
%%%%%%%%%%%%%%%%%%%%%%%%%%%%%%%%%%%%%%%
%%%%%%%%%%%%%%%%%%%%%%%%%%%%%%%%%%%%%%%

\section{Free topological vector spaces over $k_\omega$-spaces}

%%%%%%%%%%%%%%%%%%%%%%%%%%%%%%%%%%%%%%%
%%%%%%%%%%%%%%%%%%%%%%%%%%%%%%%%%%%%%%%
%%%%%%%%%%%%%%%%%%%%%%%%%%%%%%%%%%%%%%%
%%%%%%%%%%%%%%%%%%%%%%%%%%%%%%%%%%%%%%%
%%%%%%%%%%%%%%%%%%%%%%%%%%%%%%%%%%%%%%%

Let $\{ (X_n,\tau_n )\}_{n\in\NN}$ be a sequence of topological spaces such that $X_n \subseteq X_{n+1}$ and $\tau_{n+1} |_{X_n} = \tau_n$ for all $n\in\NN$.  The union $X=\cup_{n\in\NN} X_n$ with the weak topology $\tau$ (i.e., $U\in\tau$ if and only if $U\cap X_n \in\tau_n$ for every $n\in\omega$) is called the {\it inductive limit} of the sequence $\{ (X_n,\tau_n )\}_{n\in\NN}$ and it is denoted by $(X,\tau)= \underrightarrow{\lim} (X_n,\tau_n )$. Recall that a topological space is called a $k_\omega$-{\it space} if it is the inductive limit of an increasing sequence of its compact subsets. A topological group $(G,\tau)$ is called a $k_\omega$-{\it group} if its underlying topological space is a $k_\omega$-space.

\begin{theorem} \label{t:Free-k-space}
If $X$ is a $k_\omega$-space and $X=\cup_{n\in\NN} C_n$ is a $k_\omega$-decomposition of $X$, then $\VV(X)$ is a $k_\omega$-space and $\VV(X)= \cup_{n\in\NN} \SP_n(C_n)$ is a $k_\omega$-decomposition of $\VV(X)$.
\end{theorem}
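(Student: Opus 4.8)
The plan is to define the natural $k_\omega$-topology on the underlying vector space $\VV_X$ and to identify it with $\pmb{\mu}_X$ by means of the universal property. First I would record the elementary facts. The sets $\SP_n(C_n)$ increase with $n$ and cover $\VV_X$: writing $v=\lambda_1 y_1+\cdots+\lambda_m y_m$ with $y_j\in X$, one may choose $N$ with $\{y_1,\dots,y_m\}\subseteq C_N$, $m\le N$ and $|\lambda_j|\le N$, and then pad with zero coefficients to see $v\in\SP_N(C_N)$. Each $\SP_n(C_n)$ is the image of the compact set $([-n,n]\times C_n)^n$ under the continuous map $((\lambda_i,x_i))_i\mapsto\sum_i\lambda_i x_i$, hence is $\pmb{\mu}_X$-compact, and it is $\pmb{\mu}_X$-closed by Theorem \ref{t:Free-exists}(vi). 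Thus $\{(\SP_n(C_n),\pmb{\mu}_X|)\}_{n\in\NN}$ is an increasing sequence of compact Hausdorff spaces with closed inclusions, and I let $\tau$ be the inductive limit (weak) topology it determines on $\VV_X$. Then $(\VV_X,\tau)$ is a $k_\omega$-space whose compact pieces carry exactly the subspace topology $\pmb{\mu}_X|$, and since $\pmb{\mu}_X$ makes every inclusion $\SP_n(C_n)\hookrightarrow\VV_X$ continuous we have $\pmb{\mu}_X\subseteq\tau$; in particular $\tau$ is Hausdorff. It remains to prove $\tau=\pmb{\mu}_X$, and for this I would show that $(\VV_X,\tau)$, together with the inclusion $i\colon X\to(\VV_X,\tau)$, is a free topological vector space over $X$.

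Next I would check that $\tau$ is a vector topology. Here I rely on the standard behaviour of $k_\omega$-spaces: a product of $k_\omega$-spaces is again a $k_\omega$-space with the product of the two decompositions, and a map out of a $k_\omega$-space is continuous precisely when its restriction to each compact piece is continuous. For addition, the piece $\SP_n(C_n)\times\SP_n(C_n)$ is mapped into $\SP_{2n}(C_n)\subseteq\SP_{2n}(C_{2n})$, and the restriction is continuous because $\pmb{\mu}_X$ is already a vector topology and $\tau$ agrees with $\pmb{\mu}_X$ on these compacta; composing with the $\tau$-continuous inclusion of $\SP_{2n}(C_{2n})$ shows addition is $\tau$-continuous. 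Scalar multiplication is handled the same way, the piece $[-n,n]\times\SP_n(C_n)$ landing in $\SP_{n^2}(C_{n^2})$.

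Finally I would verify the universal property. The inclusion $i$ is $\tau$-continuous because its restriction to each compact $C_n$ is the $\pmb{\mu}_X$-continuous embedding $C_n\to\SP_n(C_n)$ and $X$ is a $k_\omega$-space. Given a continuous map $f\colon X\to E$ into a tvs $E$, let $\bar f\colon\VV_X\to E$ be its unique linear extension. To see $\bar f$ is $\tau$-continuous it suffices to check each restriction $\bar f|_{\SP_n(C_n)}$: the sum map $q_n\colon([-n,n]\times C_n)^n\to\SP_n(C_n)$ is a continuous surjection onto a compact Hausdorff space, hence a quotient map, and $\bar f\circ q_n\colon((\lambda_i,x_i))_i\mapsto\sum_i\lambda_i f(x_i)$ is continuous since $E$ is a tvs and $f$ is continuous on $C_n$; therefore $\bar f|_{\SP_n(C_n)}$ is continuous. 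Uniqueness of $\bar f$ is automatic as $X$ generates $\VV_X$. Thus $(\VV_X,\tau)$ has the universal property of $\VV(X)$, so by Theorem \ref{t:Free-exists}(iv) the identity map is a topological isomorphism and $\tau=\pmb{\mu}_X$. Consequently $\VV(X)=(\VV_X,\pmb{\mu}_X)$ is a $k_\omega$-space with $k_\omega$-decomposition $\{\SP_n(C_n)\}_{n\in\NN}$.

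The main obstacle is the middle step, namely showing that $\tau$ is a vector topology, since inductive-limit topologies need not respect algebraic operations in general. Everything hinges on the $k_\omega$-product lemma, which lets me reduce continuity of addition and scalar multiplication to their already-known continuity on the compact building blocks; the same reduction, via the quotient maps $q_n$, then drives the verification of the universal property.
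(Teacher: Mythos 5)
Your proof is correct and follows essentially the same route as the paper's: both form the inductive limit topology $\tau$ from the compact pieces $\SP_n(C_n)$, prove that $\tau$ is a vector topology by reducing continuity of the algebraic operations to the compact pieces via the $k_\omega$-product lemma, and then conclude $\tau=\pmb{\mu}_X$. The only difference is in presentation: where the paper appeals to the definition of $\pmb{\mu}_X$ as the finest vector topology inducing the given topology on $X$, you verify the universal property of $(\VV_X,\tau)$ explicitly via the quotient maps $q_n$, which is an equivalent (if slightly longer) way of making that final identification.
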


\begin{proof}%[Proof of Theorem \ref{t:Free-k-space}]
Let $(X,\tau)= \underrightarrow{\lim} (C_n,\tau_n )$, where $C_1\subseteq C_2\subseteq\dots$ are compact. For every $n\in\NN$ denote by $X_n$ the image of the mapping $T_n :[-n,n]^n\times C_n^n \to \VV(X)$ defined by
\[
T_n \big( (a_1,\dots,a_n)\times(x_1,\dots,x_n)\big) := a_1 x_1 +\cdots +a_n x_n.
\]
Since $\VV(X)$ is a tvs, $T_n$ is continuous and hence $X_n = \SP_n(C_n)$ is a compact subspace of $\VV(X)$. Clearly, $\VV(X)=\cup_{n\in\NN} X_n$.

Let $\tau_n$ be the topology of the compact space $X_n$.  Clearly, $\tau_{n+1}|_{X_n} = \tau_n$. So we can define the inductive limit $(\VV_X,\tau) =\underrightarrow{\lim} (X_n,\tau_n)$. Then $(\VV_X,\tau)$ is a $k_\omega$-space and every  compact subset of $(\VV_X,\tau)$ is contained in some $X_n$ by \cite[Lemma 9.3]{Ste}. It is clear that $\pmb{\mu}_X \leq \tau$. So to prove the proposition it is enough to show that $\tau$ is a vector space topology on $\VV_X$.

To  this end we must prove that the map
\[
T: \IR \times (\VV_X,\tau)\times (\VV_X,\tau) \to (\VV_X,\tau), \quad T(\lambda,x,y):= \lambda x +y,
\]
is continuous. Since $(\VV_X,\tau)$ is a $k_\omega$-space, the space $Z:=\IR \times (\VV_X,\tau)\times (\VV_X,\tau)$ is also a $k_\omega$-space. Thus, to show that $T$ is continuous we only have to show that $T$ is continuous on all compact subsets of $Z$.

Let $K$ be a compact subset of $Z$. Then $K\subseteq [-n,n]\times X_n\times X_n$ for some $n\in\NN$. Thus
\[
T(K)\subseteq T\big( [-n,n]\times X_n\times X_n\big) \subseteq X_{n^2 +n}.
\]
Noting that $K$ is compact and $\pmb{\mu}_X \leq \tau$, we see that $K$ has the same induced topology as a subset of $Z$ as it has as a subset of $\IR \times \VV(X)\times \VV(X)$. Since $\tau|_{X_{n^2 +n}} =\pmb{\mu}_X|{X_{n^2 +n}}$ and $\pmb{\mu}_X$ is a vector space topology, $T:K\to X_{n^2 +n}$ is continuous. So $T$ is continuous and hence $(\VV_X,\tau)$ is a topological vector space. Thus $\tau=\pmb{\mu}_X$ by the definition of $\pmb{\mu}_X$.
\end{proof}
\begin{remark} {\em
Banakh proved Theorem \ref{t:Free-k-space} for the special case of $X$ being a submetrizable $k_\omega$-space. The research  in \cite{Banakh-Survey} and the research  in this paper  were done independently.}
\end{remark}

 The (Weil) completeness is one of the most important properties of topological groups. Recall that any $k_\omega$ topological group is complete by \cite[Theorem 2]{HuntMorris}.
\begin{corollary} \label{c:Free-complete}
If $X$ is a $k_\omega$-space, then $\VV(X)$ is  complete.
\end{corollary}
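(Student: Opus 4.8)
The plan is to read the corollary off almost immediately from the $k_\omega$ structure of $\VV(X)$ established in Theorem \ref{t:Free-k-space}, combined with the quoted fact that $k_\omega$-groups are complete. The argument is genuinely short, so the real content lies in checking that the notion of completeness supplied by the cited theorem is the one we want. First I would apply Theorem \ref{t:Free-k-space}: since $X$ is a $k_\omega$-space, choosing any $k_\omega$-decomposition $X=\bigcup_{n\in\NN}C_n$ yields that $\VV(X)$ is a $k_\omega$-space, with $\VV(X)=\bigcup_{n\in\NN}\SP_n(C_n)$ as a $k_\omega$-decomposition.

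Next I would observe that $\VV(X)$, being a topological vector space, is in particular an abelian topological group under its addition operation, and the topology of this additive group is exactly the $k_\omega$-topology just produced. Hence $(\VV(X),+)$ is a $k_\omega$-group in the sense defined just before Theorem \ref{t:Free-k-space}. Then I would invoke \cite[Theorem 2]{HuntMorris}, which asserts that every $k_\omega$ topological group is (Weil) complete; applied to $(\VV(X),+)$ this gives completeness of $\VV(X)$ as a topological group.

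The only point requiring any care, and the closest thing to an obstacle here, is the final identification of completeness notions: one must confirm that the uniformity with respect to which \cite[Theorem 2]{HuntMorris} asserts completeness is the same additive uniformity that defines completeness of a tvs. Since both uniformities are generated by the entourages $\{(x,y): x-y\in U\}$ as $U$ ranges over the neighborhoods of $0$, and since an abelian group has a single (two-sided) uniformity, these coincide for $\VV(X)$. No estimates or further constructions are needed, and the corollary follows.
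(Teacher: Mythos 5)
Your proof is correct and follows exactly the paper's route: the paper also derives this corollary by combining Theorem \ref{t:Free-k-space} (that $\VV(X)$ is a $k_\omega$-space) with the cited result of Hunt--Morris that every $k_\omega$ topological group is complete. Your extra check that the additive tvs uniformity agrees with the group uniformity is a sensible clarification, but it is the same argument.
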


\begin{corollary} \label{c:Free-compact}
Let $X$ be a Tychonoff space. Then for every compact subset $K$ of $\VV(X)$ there is an $n\in\NN$ such that $K\subseteq \SP_n(X)$.
\end{corollary}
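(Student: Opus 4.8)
The plan is to reduce the general case to the $k_\omega$-case already settled in Theorem \ref{t:Free-k-space} by passing to the Stone-\v{C}ech compactification. As in the proof of Theorem \ref{t:Free-exists}(v)-(vi), the inclusion $\beta:X\to\beta X\subseteq\VV(\beta X)$ extends to a continuous injective linear operator ${\bar\beta}:\VV(X)\to\VV(\beta X)$. Since $\beta X$ is compact, it is trivially a $k_\omega$-space (take the constant decomposition $C_n=\beta X$), so Theorem \ref{t:Free-k-space} applies and yields that $\VV(\beta X)=\bigcup_{n\in\NN}\SP_n(\beta X)$ is a $k_\omega$-decomposition. In particular, by \cite[Lemma 9.3]{Ste}, every compact subset of $\VV(\beta X)$ is contained in some $\SP_n(\beta X)$.

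Next I would push $K$ forward. The set ${\bar\beta}(K)$ is a compact subset of $\VV(\beta X)$, so there is an $n\in\NN$ with ${\bar\beta}(K)\subseteq\SP_n(\beta X)$. As ${\bar\beta}$ is injective, $K\subseteq{\bar\beta}^{-1}\big(\SP_n(\beta X)\big)$, and it then suffices to control this preimage in terms of the sets $\SP_m(X)$, since the statement only asks for the existence of some $m$ with $K\subseteq\SP_m(X)$.

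The crux is this last step: showing ${\bar\beta}^{-1}\big(\SP_n(\beta X)\big)\subseteq\SP_{n^2}(X)$. Here I would use that $X$ is a vector space basis of $\VV(X)$, that $\beta X$ is a vector space basis of $\VV(\beta X)$, and that ${\bar\beta}$ is the identity on $X\subseteq\beta X$. Writing $v\in\VV(X)$ in reduced form $v=\sum_j\lambda_j x_j$ with distinct $x_j\in X$ and $\lambda_j\neq 0$, the element ${\bar\beta}(v)=\sum_j\lambda_j x_j$ is exactly the reduced expansion of ${\bar\beta}(v)$ over the basis $\beta X$, so $\supp\big({\bar\beta}(v)\big)=\supp(v)$. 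If ${\bar\beta}(v)\in\SP_n(\beta X)$, say ${\bar\beta}(v)=\sum_{i=1}^n\mu_i y_i$ with $\mu_i\in[-n,n]$ and $y_i\in\beta X$, then comparing with the unique reduced representation forces $\supp(v)\subseteq\{y_1,\dots,y_n\}$, so $v$ has at most $n$ terms, and each $\lambda_j$, being a sum of at most $n$ of the $\mu_i$, satisfies $|\lambda_j|\leq n^2$. Hence $v\in\SP_{n^2}(X)$, giving $K\subseteq\SP_{n^2}(X)$. The one point needing care — and the main obstacle if handled carelessly — is precisely this coefficient bound: auxiliary points $y_i\in\beta X\setminus X$ and the merging of repeated basis vectors can enlarge the coefficients, but only by a factor controlled by $n$, which is harmless since we only need some containing $\SP_m(X)$.
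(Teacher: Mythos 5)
Your proof is correct and takes essentially the same approach as the paper: extend to the injective continuous linear operator ${\bar \beta}:\VV(X)\to\VV(\beta X)$, apply Theorem \ref{t:Free-k-space} to the $k_\omega$-space $\beta X$ to trap ${\bar \beta}(K)$ in some $\SP_n(\beta X)$, and pull back via injectivity. The only (harmless) difference is the final bookkeeping: the paper records the sharper identity $\VV(X)\cap \SP_n(\beta X)=\SP_n(X)$ with the same $n$ --- since each point of $\beta X\setminus X$ must carry total coefficient zero, those terms can be discarded outright, leaving at most $n$ terms with coefficients in $[-n,n]$ --- whereas your merging estimate lands in $\SP_{n^2}(X)$, which suffices equally well for the statement.
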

\begin{proof}
Denote by $\beta X$ the Stone--\v{C}ech compactification of $X$, and let ${\bar \beta}:\VV(X)\to \VV(\beta X)$ be an extension of $\beta$. Consider the following sequence
\[
\xymatrix{
K  \ar@{->}[r]^{id_K} & \VV(X) \ar@{->}[r]^{\bar \beta} & \VV(\beta X). }
\]
Then, by Theorem \ref{t:Free-k-space}, there is an $n\in\NN$ such that ${\bar \beta}(K) \subseteq \SP_n(\beta X)$. Since ${\bar \beta}$ is injective we obtain $K\subseteq {\bar \beta}^{-1} \big( \VV(X)\cap \SP_n(\beta X)\big) = \SP_n(X)$.
\end{proof}

Theorem \ref{t:Free-k-space} is surprising since it contrasts with what is known about free locally convex spaces, see Fact \ref{f:Free-L}.
\begin{definition} {\em
The {\em  free locally convex space} $L(X)$ over a Tychonoff space $X$ is a pair consisting of a locally convex space $L(X)$ and  a continuous mapping $i: X\to L(X)$ such that every  continuous mapping $f$ from $X$ to a locally convex space  (lcs) $E$ gives rise to a unique continuous linear operator ${\bar f}: L(X) \to E$  with $f={\bar f} \circ i$. }
\end{definition}
We shall denote the topology of $L(X)$ by $\pmb{\nu}_X$, so $L(X)=(\VV_X,\pmb{\nu}_X)$.

\begin{fact}[\cite{Gabr}] \label{f:Free-L}
For a Tychonoff space $X$, the space $L(X)$ is a $k$-space if and only if $X$ is a countable discrete space.
\end{fact}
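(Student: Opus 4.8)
The plan is to prove both implications, handling the forward (harder) direction by establishing discreteness and countability separately. Throughout I will use the standard facts that a closed subspace of a $k$-space is again a $k$-space, and that $X$ sits inside $L(X)$ as a closed subspace (the free-lcs analogue of Theorem \ref{t:Free-exists}(v)), so that $X$ is itself a $k$-space whenever $L(X)$ is. For the implication $(\Leftarrow)$, if $X=\{x_n:n\in\NN\}$ is countable and discrete then, by the universal property, $L(X)$ is the locally convex direct sum $\bigoplus_{x\in X}\IR$; over a countable index set this is the space $\varphi$, the inductive limit of the compact cubes $\{v:\supp(v)\subseteq\{x_1,\dots,x_n\},\ |\lambda_i|\le n\}$, so $L(X)$ is a $k_\omega$-space and in particular a $k$-space. (Equivalently, here $L(X)=\VV(X)$ and one may quote Theorem \ref{t:Free-k-space}.)

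For $(\Rightarrow)$ assume $L(X)$ is a $k$-space. I would first show $X$ is discrete by proving that \emph{every compact subset of $X$ is finite}: since $X$ is then a $k$-space in which all compacta are finite, every subset of $X$ is closed, so $X$ is discrete. Suppose instead that some $K\subseteq X$ is infinite compact. As $K$ is closed in $X$, the free-lcs analogue of Lemma \ref{l:Free-subspace}, together with the fact that $L(K)$ embeds as a topological subspace of $L(X)$ when $K$ is compact, shows that $L(K)$ is a closed subspace of $L(X)$ and hence a $k$-space. Thus the whole question of discreteness reduces to the key lemma: \emph{for every infinite compact space $K$, the space $L(K)$ is not a $k$-space}. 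This is the main obstacle.

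To attack this lemma I would use the free-lcs analogue of Corollary \ref{c:Free-compact}: every compact (indeed bounded) subset of $L(K)$ is contained in $Q_N:=\overline{\conv}\big([-N,N]\cdot K\cup[-N,N]\cdot(-K)\big)$ for some $N\in\NN$, i.e. in the closure of $\{\sum_i c_i x_i:x_i\in K,\ \sum_i|c_i|\le N\}$; consequently a set is closed on every compactum iff it meets each $Q_N$ in a closed set. Fixing a non-isolated point $p\in K$, the goal is to build $A\subseteq L(K)$ whose members have coefficient-sums $\sum_i|c_i|\to\infty$ (so that $A\cap Q_N$ is closed, in fact of bounded mass, for each $N$) while $0\in\overline A\setminus A$, the clustering being produced from the non-isolation of $p$. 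The hard point is that $A$ \emph{cannot} be a sequence: using the Kantorovich-type seminorms attached to the continuous pseudometrics on $K$ which generate $\pmb{\nu}_K$, together with evaluation against the constant function $f\equiv 1$, one checks that every convergent sequence in $L(K)$ has bounded total mass $\sum_i|c_i|$ and so never leaves some $Q_N$. Hence the witness must be a non-first-countable ``fan'', and for compacta carrying no nontrivial convergent sequence (such as $K=\beta\NN$) the accumulation at $0$ has to be extracted from the whole neighbourhood filter of $p$. Executing this fan construction, and verifying closedness on each $Q_N$, is the technical heart of the proof.

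Once $X$ is known to be discrete we have $L(X)=\bigoplus_{x\in X}\IR$, and it remains to show that this is not a $k$-space when the index set $I:=X$ is uncountable, which forces $X$ to be countable. In a locally convex direct sum every bounded, hence every compact, set lies in a finite subsum $\IR^{F}$ ($F\subseteq I$ finite), so closedness on compacta is equivalent to closedness in each coordinate subspace $\IR^{F}$. Consider
\[
A:=\Big\{\tfrac{1}{n^2}\,(x_{i_1}+\cdots+x_{i_n}):\ n\in\NN,\ i_1<\cdots<i_n\in I\Big\}.
\]
For every finite $F\subseteq I$ the intersection $A\cap\IR^{F}$ is finite, hence closed, and $0\notin A$. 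On the other hand $0\in\overline A$: given a basic neighbourhood $V=\{\sum_i\lambda_i x_i:\sum_i|\lambda_i|/\eps_i<1\}$ of $0$ with all $\eps_i>0$, uncountability of $I$ forces $\{i\in I:\eps_i\ge 1/C\}$ to be infinite for some $C\in\NN$ (otherwise $I=\bigcup_{C}\{i:\eps_i\ge 1/C\}$ would be a countable union of finite sets), and choosing $n>C$ indices $i_1<\cdots<i_n$ from this set gives $\sum_k|\lambda_{i_k}|/\eps_{i_k}\le \tfrac{1}{n^2}\,nC=\tfrac{C}{n}<1$, so the corresponding point of $A$ lies in $V$. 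Thus $A$ is closed on every compactum but is not closed, so $\bigoplus_{x\in X}\IR$ is not a $k$-space. This contradiction shows $X$ is countable, and together with the discreteness established above completes the proof.
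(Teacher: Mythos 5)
A preliminary remark: the paper offers no proof of this statement at all --- it is imported as a Fact from \cite{Gabr} --- so your proposal has to stand entirely on its own. Two of its three pieces do stand. The backward implication is correct ($L(X)=\bigoplus_{x\in X}\IR=\phi$ for countable discrete $X$, a $k_\omega$-space). Also, your argument that an uncountable locally convex direct sum $\bigoplus_{x\in X}\IR$ is not a $k$-space is correct, and it is genuinely needed: one cannot simply quote Fact \ref{f:Free-tightness}, which concerns $\VV(X)$, because for uncountable discrete $X$ Protasov's theorem (cited before Theorem \ref{t:Free-lcs}) gives $\VV(X)\neq L(X)$. Your set $A$ meets every finite subsum in a finite set, compact subsets of a locally convex direct sum do lie in finite subsums, and the counting argument producing $0\in\overline{A}\setminus A$ from uncountability of the index set is sound.

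The genuine gap is in the discreteness step. You correctly reduce it to the lemma that $L(K)$ is not a $k$-space for any infinite compact $K$, but you never prove that lemma: you describe what a witness set should look like, observe (correctly, via boundedness of convergent sequences) that no sequence can do the job, and then state that ``executing this fan construction \dots\ is the technical heart of the proof.'' That technical heart is precisely the content of the theorem --- it is the main result of \cite{Gabr} --- so deferring it leaves the forward implication unproved. Moreover, your sketch steers into avoidable difficulty: the worry about compacta such as $\beta\NN$ with no nontrivial convergent sequences disappears if one first notes that every infinite compact Hausdorff space admits a continuous surjection onto an infinite compact metrizable space $M$ (a diagonal of countably many separating functions); such a surjection is a quotient map, the $L$-analogue of Theorem \ref{t:Free-exists}(vii) makes $L(M)$ a quotient of $L(K)$, and quotients of $k$-spaces are $k$-spaces, so one may assume $K$ is metrizable. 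At that point no fan is needed at all: $L(K)$ is then cosmic (the corollary to Proposition \ref{p:Free-cosmic}), so its compact subsets are metrizable, and a $k$-space whose compacta are metrizable is sequential; by Fact \ref{p:Free-Banakh} every convergent sequence together with its limit lies in some compact set $\SP_n(K)$, so sequential closedness is tested on the sets $\SP_n(K)$, whence $L(K)$ would carry the inductive limit topology of these compacta, i.e., would be a $k_\omega$-group; then $L(K)$ would be complete by \cite{HuntMorris}, contradicting Fact \ref{f:Free-Completness}(ii). Some such argument (or the explicit construction you postponed) is indispensable; as written, your proposal establishes only the easy parts of the statement.
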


Let us recall also the definition of free abelian topological groups.
\begin{definition} {\em
Let $X$ be a Tychonoff space. An abelian topological group $A(X)$ is called {\em  the free abelian topological  group} over  $X$ if $A(X)$ satisfies the following conditions:
\begin{enumerate}
\item[{\rm (i)}] there is a continuous mapping  $i: X\to A(X)$ such that $i(X)$ algebraically generates $A(X)$;
\item[{\rm (ii)}] if $f: X\to G$ is a continuous mapping to an abelian topological  group $G$, then there exists a continuous homomorphism ${\bar f}: A(X) \to G$  such that $f={\bar f} \circ i$.
\end{enumerate} }
\end{definition}
The fact that $\VV(X)$ is complete when $X$ is a $k_\omega$-space contradicts  the known result (Fact \ref{f:Free-Completness}) that the free locally convex space $L(X)$ is not complete if $X$ has any infinite compact subspace.
\begin{fact} \label{f:Free-Completness}
Let $X$ be a Tychonoff space. Then
\begin{enumerate}
\item[{\rm (i)}] {\em \cite{Tkac}} $A(X)$ is complete if and only if $X$ is Dieudonn\'{e} complete;
\item[{\rm (ii)}] {\em  \cite{Usp2}} $L(X)$ is complete if and only if $X$ is Dieudonn\'{e} complete and does not have infinite compact subsets.
\end{enumerate}
\end{fact}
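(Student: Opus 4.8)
The plan is to prove both characterizations through the interplay between the free construction and the Dieudonn\'{e} completion $\mu X$ of $X$, exploiting the fact that $\mu X$ is exactly the completion of $X$ with respect to its universal (fine) uniformity. The observation common to both parts is that the free topology induces on the generating set $X$ precisely this fine uniformity: every continuous pseudometric on $X$ arises from a continuous map of $X$ into a metric space, which lifts to a continuous homomorphism on $A(X)$, respectively a continuous operator on $L(X)$, so the subspace uniformity on $X$ is generated by all continuous pseudometrics. Since $X$ is closed in $A(X)$ and in $L(X)$ (as in Theorem \ref{t:Free-exists}(v) for the vector-space case), one direction is then immediate: if the ambient group or space is complete, its closed subspace $X$ is complete in the fine uniformity, i.e.\ $X$ is Dieudonn\'{e} complete. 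This disposes of the ``only if'' implication in (i) and of the Dieudonn\'{e}-completeness half of the ``only if'' implication in (ii).

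For the converse in part (i), I would show that the completion $\widehat{A(X)}$ is topologically the free abelian group $A(\mu X)$ over the Dieudonn\'{e} completion; granting this, $A(X)$ is complete iff $A(X) = A(\mu X)$ iff $X = \mu X$. The substance is proving $\widehat{A(X)} = A(\mu X)$: that the closure of $X$ in $\widehat{A(X)}$ is $\mu X$, via the fine-uniformity identification above, and that every element of the completion is a finite integer combination of points of $\mu X$. Here the rigidity of integer coefficients does the decisive work, since coefficients cannot decay, a Cauchy filter is forced to stabilise its supports in a uniformly bounded way and to make its coefficients eventually constant. Part (ii) instead proceeds through the duality $L(X)' = C(X)$: the completion $\widehat{L(X)}$ embeds into $C(X)^*$ and consists of the functionals representable by Radon measures of compact support, so completeness amounts to every such measure being a finite combination of point masses. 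This is exactly where the two phenomena separate. Because $L(X)$ allows \emph{real} coefficients, a convergent sequence $x_n \to x_0$ of distinct points in an infinite compact set $K \subseteq X$ lets one form partial sums $s_m = \sum_{n \le m} 2^{-n}(x_n - x_0)$ whose differences $s_m - s_k$ are $\pmb{\nu}_X$-small, being controlled by $\sum_{n > k} 2^{-n}$ uniformly on every pointwise-bounded equicontinuous subset of $C(X)$; the net is Cauchy but its limit is a compactly supported measure with infinite support, hence not in $\VV_X$. Thus an infinite compact subset obstructs completeness, while its absence forces every compactly supported measure to be finitely supported and, together with Dieudonn\'{e} completeness, yields completeness.

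The main obstacle in each part is the precise identification of the completion. In (i) it is the uniform boundedness of supports along a Cauchy filter, which is what ultimately pins $\widehat{A(X)}$ down to $A(\mu X)$ and makes the integer-coefficient rigidity pay off. In (ii) it is the identification of $\widehat{L(X)}$ with the space of compactly supported Radon measures, together with the verification that no finitely supported element can approximate one with infinite support in the $\pmb{\nu}_X$-topology. I expect the measure-theoretic step in (ii) to be the hardest ingredient, since it requires translating the locally convex topology into uniform convergence on equicontinuous subsets of $C(X)$ and then estimating the approximation of an infinitely supported measure with sufficient care.
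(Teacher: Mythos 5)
First, a point of reference: the paper does not prove this statement at all. It is labelled a \emph{Fact} and attributed to Tkachenko \cite{Tkac} for (i) and Uspenski\u{\i} \cite{Usp2} for (ii), so there is no internal argument to compare yours against; what you have written is an outline of the proofs in the cited literature, and in broad strokes it follows the same standard route (identify the completion of $A(X)$ with $A(\mu X)$ over the Dieudonn\'{e} completion $\mu X$, and locate the completion of $L(X)$ inside a space of compactly supported measures). The soft halves of your argument are essentially fine: the uniformity that $A(X)$ and $L(X)$ induce on $X$ is the universal one, and closedness of $X$ then gives the necessity of Dieudonn\'{e} completeness in both parts --- though note that a continuous map of $X$ into a bare metric space does not ``lift to a continuous homomorphism''; you must first pass to the quotient metric space $X_d$ and isometrically embed it into a Banach space (Kuratowski), and lift that map.

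The genuine gap is in your necessity argument for ``no infinite compact subsets'' in (ii). You start from ``a convergent sequence $x_n \to x_0$ of distinct points in an infinite compact set $K \subseteq X$''. Such a sequence need not exist: $\beta\NN$ is an infinite compact space containing no nontrivial convergent sequences whatsoever, and $X=\beta\NN$ is precisely a case the theorem must handle (by the Fact, $L(\beta\NN)$ is not complete). As written, your construction proves nothing for such $X$. The repair is to use instead a countably infinite \emph{discrete} subspace $\{x_n\}$ of $K$ (which always exists) and the partial sums $t_m=\sum_{n\le m}2^{-n}x_n$. Cauchyness can then no longer come from $f(x_n)-f(x_0)\to 0$; it comes from the fact that every continuous seminorm $p$ of $L(X)$ restricts to a continuous function on $X$ and hence is bounded on the compact set $K$, giving $p(t_m-t_k)\le 2^{-k}\sup_{x\in K}p(x)$ (equivalently: an equicontinuous pointwise bounded family in $C(X)$ is \emph{uniformly} bounded on $K$). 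Non-convergence to a finitely supported element survives the change: discreteness gives $x_j\notin\overline{\{x_n : n\ne j\}}$, so a continuous function equal to $1$ at $x_j$ and vanishing on $\overline{\{x_n : n\ne j\}}$ and on the rest of any candidate support extracts the nonzero coefficient $2^{-j}$, forcing infinite support.

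A secondary weakness: the hard direction of (i) (Dieudonn\'{e} complete $\Rightarrow$ $A(X)$ complete) is asserted rather than argued. The phrase ``coefficients cannot decay, so a Cauchy filter makes its coefficients eventually constant'' has no rigorous content as stated: the coefficient functional $v\mapsto d(x,v)$ is \emph{not} continuous on $A(X)$ at any non-isolated point $x$ (a net $x_\alpha\to x$ in $X$ converges in $A(X)$ while its $x$-coefficients stay $0$), so no continuity argument forces stabilization along a Cauchy filter. Establishing that Cauchy filters have uniformly bounded supports --- via Graev extensions of pseudometrics --- is exactly where the substance of Tkachenko's theorem lies, and your outline leaves that box closed; the analogous identification of $\widehat{L(X)}$ with compactly supported Radon measures in (ii) is likewise the hard core of Uspenski\u{\i}'s proof rather than a known starting point.
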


\begin{proposition} \label{p:Free-MMO-1}
Let $X=\bigcup_{n\in\NN} C_n$ be a $k_\omega$-decomposition of a $k_\omega$-space $X$ into compact sets and let $E$ be a topological vector space generated algebraically by $X$. Further, let $E$ have the property that a subset $A$ of $E$ is closed in $E$ if and only if $A\cap \SP_n(C_n)$ is compact for all $n\in\NN$. Then the topology of $E$ is the finest vector topology which induces the given topology on $X$, and so $E$ is the free topological vector space over $X$.
\end{proposition}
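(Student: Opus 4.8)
The plan is to prove directly that the topology $\tau_E$ of $E$ coincides with the free topology $\pmb{\mu}_X$ of $\VV(X)$; both assertions of the proposition then follow, since I will also check that $\pmb{\mu}_X$ is precisely the finest vector topology on $\VV_X$ inducing the original topology on $X$. I read the hypotheses as putting $X$ into $E$ as a topological subspace, so that each $C_n$, and hence each $\SP_n(C_n)$, is compact in $E$, and $\SP_1(C_1)\subseteq\SP_2(C_2)\subseteq\cdots$ is an increasing cover of $\VV_X$.

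First I would record the characterization of $\pmb{\mu}_X$. Since $i_X\colon X\to\VV(X)$ is an embedding, $\pmb{\mu}_X$ induces the original topology on $X$. If $\sigma$ is \emph{any} vector topology on $\VV_X$ whose restriction to $X$ is the original topology, then the inclusion $X\to(\VV_X,\sigma)$ is continuous, so by the universal property of $\VV(X)$ it extends to a continuous linear map $(\VV_X,\pmb{\mu}_X)\to(\VV_X,\sigma)$. This map fixes the basis $X$ pointwise, hence is the identity of $\VV_X$, giving $\sigma\leq\pmb{\mu}_X$. Thus $\pmb{\mu}_X$ is the finest vector topology inducing the original topology on $X$.

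Next, the inclusion $X\hookrightarrow E$ is continuous, so the same universal-property argument yields a continuous linear identity map $\mathrm{id}\colon\VV(X)\to E$, whence $\tau_E\leq\pmb{\mu}_X$. For the reverse inequality I invoke the $k_\omega$-structure of Theorem \ref{t:Free-k-space}: let $F$ be an arbitrary $\pmb{\mu}_X$-closed set. Each $\SP_n(C_n)$ is compact in $\VV(X)$, so $F\cap\SP_n(C_n)$ is a closed subset of a compact set and therefore compact in $\VV(X)$. Pushing it forward along the continuous map $\mathrm{id}\colon\VV(X)\to E$ shows that $F\cap\SP_n(C_n)$ is compact in $E$ for every $n$, and the defining closedness criterion for $E$ then forces $F$ to be $\tau_E$-closed. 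Hence every $\pmb{\mu}_X$-closed set is $\tau_E$-closed, i.e. $\pmb{\mu}_X\leq\tau_E$; combining the two inequalities gives $\tau_E=\pmb{\mu}_X$, so $E=\VV(X)$ and its topology is the finest vector topology inducing the original topology on $X$.

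The one genuinely load-bearing step is the inequality $\pmb{\mu}_X\leq\tau_E$, and the delicate point is the transfer of compactness: the criterion defining $E$ asserts compactness of $A\cap\SP_n(C_n)$ \emph{in} $E$, whereas Theorem \ref{t:Free-k-space} only delivers compactness in $\VV(X)$. These are reconciled exactly by the already-established continuity of $\mathrm{id}\colon\VV(X)\to E$ (continuous images of compacta are compact), which is why the easy inequality $\tau_E\leq\pmb{\mu}_X$ must be secured first. A secondary matter to keep honest is the reading of the hypotheses: the closedness criterion by itself does not force the $\SP_n(C_n)$ to be compact in $E$ (the naive attempts to deduce this are circular), so one really does need that $X$ embeds in $E$ with its $k_\omega$-topology, and it is precisely this that makes each $\SP_n(C_n)$ compact and lets the whole argument proceed.
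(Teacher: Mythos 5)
Your proof is correct and takes essentially the same approach as the paper: both arguments rest on the fact, extracted from Theorem \ref{t:Free-k-space}, that $\pmb{\mu}_X$ is the finest vector topology inducing the given topology on $X$ and that its closed sets are exactly those with compact intersections with every $\SP_n(C_n)$, which is then matched against the hypothesized closedness criterion for $E$. The paper's version is terser---it concludes by observing that the two topologies agree on each compact $\SP_n(C_n)$, so the two closedness criteria coincide---while you spell the same content out as the pair of inequalities $\tau_E\leq\pmb{\mu}_X$ (universal property) and $\pmb{\mu}_X\leq\tau_E$ (compactness transfer plus the criterion).
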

\begin{proof}
Let $\tau$ be the given topology on $E$ and $\tau' \supseteq \tau$ the finest vector topology inducing the given topology on $X$. By the proof of Theorem \ref{t:Free-k-space}, $A\subseteq E$ is closed in $\tau'$ if and only if  each $A\cap \SP_n(C_n)$ is compact. But $\tau$ and $\tau'$ induce the same topology on $X$, hence on $C_n$ and hence also on $\SP_n(C_n)$. Thus $\tau' =\tau$ as desired.
\end{proof}

\begin{proposition} \label{p:Free-MMO-2}
Let $X=\cup_{n\in\NN} C_n$ be a $k_\omega$-space and let $Y$ be a subset of $\VV(X)$  such that $Y$ is a free vector space basis for the subspace, $\SP(Y)$, that it generates. Assume that $K_1,K_2,\dots$ is a sequence of compact subsets of $Y$ such that $Y=\cup_{n\in\NN} K_n$ is  a $k_\omega$-decomposition of $Y$ inducing the same topology on $Y$ that $Y$ inherits as a subset of $\VV(X)$. If for every $n\in\NN$ there is a natural number $m$ such that $\SP(Y)\cap \SP_n(C_n) \subseteq \SP_m(K_m)$, then $\SP(Y)$ is $\VV(Y)$, and both $\SP(Y)$ and $Y$ are closed subsets of $\VV(X)$.
\end{proposition}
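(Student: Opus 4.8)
The plan is to compare the two $k_\omega$-structures in play: the decomposition $\VV(X)=\bigcup_{n}\SP_n(C_n)$ furnished by Theorem \ref{t:Free-k-space}, and the decomposition $\VV(Y)=\bigcup_{n}\SP_n(K_n)$ that the sets $K_n$ impose on the free space over $Y$. Write $\sigma$ for the subspace topology that $\SP(Y)$ inherits from $\VV(X)$. Since $\SP(Y)$ is a vector subspace of the tvs $\VV(X)$, the pair $(\SP(Y),\sigma)$ is a tvs generated algebraically by the free basis $Y$, and by hypothesis $\sigma$ induces on $Y$ precisely the $k_\omega$-topology of $\bigcup_n K_n$. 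I would prove the closedness of $\SP(Y)$ first, since it feeds the identification $\sigma=\pmb{\mu}_Y$.

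First I would show $\SP(Y)$ is closed in $\VV(X)$. By Theorem \ref{t:Free-k-space}, $\VV(X)=\bigcup_n\SP_n(C_n)$ is a $k_\omega$-decomposition into compacta, so a set is closed in $\VV(X)$ iff it meets each $\SP_n(C_n)$ in a compact set. Fix $n$ and choose $m$ with $\SP(Y)\cap\SP_n(C_n)\subseteq\SP_m(K_m)$ as in the hypothesis. Combining this with the trivial containment $\SP_m(K_m)\subseteq\SP(Y)$ yields the set identity
\[
\SP(Y)\cap\SP_n(C_n)=\SP_n(C_n)\cap\SP_m(K_m).
\]
The right-hand side is the intersection of the closed set $\SP_n(C_n)$ (Theorem \ref{t:Free-exists}(vi)) with the compactum $\SP_m(K_m)$ (the continuous image of $[-m,m]^m\times K_m^m$, where $K_m$ is compact in $\VV(X)$ because $\bigcup_n K_n$ induces the subspace topology on $Y$), hence compact. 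Thus every $\SP(Y)\cap\SP_n(C_n)$ is compact and $\SP(Y)$ is closed in $\VV(X)$.

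Next I would identify $\sigma$ with $\pmb{\mu}_Y$ by applying Proposition \ref{p:Free-MMO-1} to the $k_\omega$-space $Y=\bigcup_n K_n$ and the tvs $E=(\SP(Y),\sigma)$. The only thing to verify is that $A\subseteq\SP(Y)$ is $\sigma$-closed iff $A\cap\SP_n(K_n)$ is compact for every $n$. Because $\SP(Y)$ is now known to be closed in $\VV(X)$, $\sigma$-closedness of $A$ coincides with closedness of $A$ in $\VV(X)$, i.e.\ with compactness of each $A\cap\SP_n(C_n)$. If every $A\cap\SP_n(K_n)$ is compact, then for fixed $n$ the identity above together with $A\subseteq\SP(Y)$ gives $A\cap\SP_n(C_n)=(A\cap\SP_m(K_m))\cap\SP_n(C_n)$, a closed subset of the compact set $A\cap\SP_m(K_m)$, hence compact; conversely, if $A$ is $\sigma$-closed then $A\cap\SP_n(K_n)$ is the intersection of a $\VV(X)$-closed set with the compactum $\SP_n(K_n)$. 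Proposition \ref{p:Free-MMO-1} then forces $\sigma$ to be the free vector topology over $Y$, i.e.\ $(\SP(Y),\sigma)=\VV(Y)$. Finally, $Y$ is closed in $\VV(Y)=\SP(Y)$ by Theorem \ref{t:Free-exists}(v), and $\SP(Y)$ is closed in $\VV(X)$ by the first step, so $Y$ is closed in $\VV(X)$ as well.

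I expect the crux to be the middle step: the entire argument rests on converting ``closedness at level $C_n$'' into ``closedness at level $K_m$,'' a conversion made possible only by the displayed set identity, which is exactly where the hypothesis $\SP(Y)\cap\SP_n(C_n)\subseteq\SP_m(K_m)$ (alongside $\SP_m(K_m)\subseteq\SP(Y)$) enters. A subsidiary point needing care is that $K_m$ be genuinely compact as a subset of $\VV(X)$, so that $\SP_m(K_m)$ is compact there; this is precisely what the assumption that $\bigcup_n K_n$ induces the inherited topology on $Y$ guarantees.
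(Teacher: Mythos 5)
Your proof is correct and follows essentially the same route as the paper's: first closedness of $\SP(Y)$ via the identity $\SP(Y)\cap\SP_n(C_n)=\SP_n(C_n)\cap\SP_m(K_m)$, then an application of Proposition \ref{p:Free-MMO-1} to $\SP(Y)$ with its inherited topology, using that same identity to pass between the $\SP_n(C_n)$-level and the $\SP_m(K_m)$-level. The only cosmetic difference is at the end, where the paper proves closedness of $Y$ ``analogously'' by the direct compact-intersection argument, while you deduce it from Theorem \ref{t:Free-exists}(v) together with the closedness of $\SP(Y)$ once $\SP(Y)=\VV(Y)$ is established; both are valid.
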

\begin{proof}
It follows from the proof of Theorem \ref{t:Free-k-space} that, to prove $\SP(Y)$ is closed in $\VV(X)$, we only have to show that $\SP(Y)\cap \SP_n(C_n)$ is compact for each $n\in\NN$. Now
\[
\SP(Y)\cap \SP_n(C_n)=\SP(Y)\cap \SP_n(C_n)\cap \SP_m(K_m)=\SP_n(C_n)\cap \SP_m(K_m),
\]
and hence is compact. Thus $\SP(Y)$ is closed in $\VV(X)$. Analogously, $Y$ is closed in $\VV(X)$.

Using Proposition \ref{p:Free-MMO-1}, to prove that $\SP(Y)$ is the free topological vector space on $Y$, it suffices to show that a subset $A$ of $\SP(Y)$ is closed if $A\cap \SP_n(K_n)$ is compact for all $n\in\NN$. Consider $A\cap \SP_n(C_n)$, for any $n$. Then there is an $m\in\NN$ such that $\SP(Y)\cap \SP_n(C_n) \subseteq \SP_m(K_m)$ and hence
\[
A\cap \SP_n(C_n) =A\cap \SP_n(C_n) \cap \SP(Y)=A\cap \SP_n(C_n)\cap \SP_m(K_m)= \big(A\cap \SP_m(K_m)\big) \cap \SP_n(C_n).
\]
Since both $A\cap \SP_m(K_m)$ and $\SP_n(C_n)$ are compact,  $A\cap \SP_n(C_n)$ is compact in $\VV(X)$. Thus $A$ is a closed subset of $\VV(X)$ and the proof is complete.
\end{proof}

Since a closed subspace of a $k_\omega$-space is also a $k_\omega$-space, Lemma \ref{l:Free-subspace} and Proposition \ref{p:Free-MMO-2} implies
\begin{corollary} \label{c:Free-1}
If $Y$ is a closed subspace of a $k_\omega$-space $X$, then the closed subspace $\VV(Y,X)$ of $\VV(X)$ is $\VV(Y)$.
\end{corollary}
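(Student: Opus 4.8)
The plan is to apply Proposition \ref{p:Free-MMO-2} to the subset $Y \subseteq X \subseteq \VV(X)$, taking as the compact decomposition of $Y$ the sets $K_n := Y \cap C_n$. Since $Y$ is closed in the $k_\omega$-space $X$, it is itself a $k_\omega$-space, and the sets $K_n$ are compact (being closed subsets of the compact sets $C_n$), increasing (as $C_n \subseteq C_{n+1}$), and form a $k_\omega$-decomposition $Y = \bigcup_{n\in\NN} K_n$ inducing on $Y$ exactly the topology it inherits as a subspace of $X$, hence of $\VV(X)$. This verifies the topological hypotheses on $Y$ demanded by Proposition \ref{p:Free-MMO-2}.

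Next I would check the algebraic hypothesis. By Theorem \ref{t:Free-exists}(ii), $X$ is a vector space basis of $\VV(X)$, so any subset of $X$ --- in particular $Y$ --- is linearly independent and is therefore a free vector space basis of the subspace $\SP(Y) = \VV(Y,X)$ that it generates. Moreover, Lemma \ref{l:Free-subspace} already guarantees that $\VV(Y,X)$ is a closed subspace of $\VV(X)$.

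The heart of the argument, and the step I expect to require the most care, is the containment hypothesis: for each $n\in\NN$ one must produce an $m$ with $\SP(Y) \cap \SP_n(C_n) \subseteq \SP_m(K_m)$. Let $v \in \SP(Y) \cap \SP_n(C_n)$. Writing $v = \lambda_1 x_1 + \cdots + \lambda_n x_n$ with $\lambda_i \in [-n,n]$ and $x_i \in C_n$, and collecting equal basis vectors, uniqueness of the representation in the basis $X$ forces $\supp(v) \subseteq \{ x_1,\dots,x_n\} \subseteq C_n$; on the other hand $v \in \VV(Y,X)$ forces $\supp(v) \subseteq Y$. Hence $\supp(v) \subseteq Y \cap C_n = K_n$, so $v$ has at most $n$ nonzero coefficients, each being a sum of at most $n$ of the $\lambda_i$ and thus bounded in absolute value by $n^2$. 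Taking $m := n^2$ and using $K_n \subseteq K_{n^2}$ gives $v \in \SP_{n^2}(K_{n^2}) = \SP_m(K_m)$, as required.

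With all hypotheses of Proposition \ref{p:Free-MMO-2} verified, I conclude that $\SP(Y) = \VV(Y,X)$ is the free topological vector space $\VV(Y)$; that is, the subspace topology it inherits from $\VV(X)$ coincides with the free vector topology $\pmb{\mu}_Y$. Combined with closedness (from Lemma \ref{l:Free-subspace}, or from Proposition \ref{p:Free-MMO-2} itself), this is exactly the assertion of the corollary. The only genuine obstacle is the support-and-coefficient bookkeeping in the containment step; everything else is a routine transfer of the $k_\omega$-structure of $X$ to its closed subspace $Y$.
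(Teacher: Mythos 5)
Your proof is correct and follows exactly the route the paper takes: the paper's proof of Corollary \ref{c:Free-1} is the one-line observation that a closed subspace of a $k_\omega$-space is a $k_\omega$-space, so Lemma \ref{l:Free-subspace} and Proposition \ref{p:Free-MMO-2} apply. Your write-up merely makes explicit the details the paper leaves to the reader, namely the choice $K_n = Y\cap C_n$ and the verification of the containment $\SP(Y)\cap\SP_n(C_n)\subseteq\SP_m(K_m)$ with $m=n^2$, all of which are correct.
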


%Let $Z$ be a closed subspace of a space $X$. Example 3.6 of \cite{STh} shows that the topology of $A(Z)$ can be strictly finer than the topology of $A(Z,X)$. So also the topology of $\VV(Z)$ and $L(Z)$ can be strictly finer than the topology of $\VV(Z,X)$ or $L(Z,X)$, respectively. However, if $Z$ is a compact subspace of $X$ this is not the case as the following proposition shows.
\begin{proposition} \label{p:Free-Subspace}
If $K$ is a compact subspace of a Tychonoff space $X$, then $\VV(K,X)$ is $\VV(K)$.
\end{proposition}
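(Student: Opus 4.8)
The plan is to realize $\VV(K,X)$ as a topological vector space squeezed between two copies of the free space $\VV(K)$ and then to use the compactness of the Stone--\v{C}ech compactification to force its subspace topology to be the free one.

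First I would record the trivial half. The inclusion $\iota:K\hookrightarrow X$ is continuous, so by Definition \ref{Def:FreeVS} it extends to a continuous linear operator ${\bar\iota}:\VV(K)\to\VV(X)$ whose image is exactly $\VV(K,X)$; it is injective because $K$ is the basis of $\VV(K)$ and, by Theorem \ref{t:Free-exists}(ii), $K\subseteq X$ is part of the basis $X$ of $\VV(X)$, hence linearly independent there. Viewing ${\bar\iota}$ as a map onto $\VV(K,X)$ equipped with the subspace topology $\tau$ inherited from $\VV(X)$, it is a continuous linear bijection; equivalently $\tau$ is coarser than the free topology $\pmb{\mu}_K$. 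The entire content of the proposition is the reverse statement, namely that ${\bar\iota}^{-1}$ is continuous.

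The key idea is to pass to $\beta X$. Since $\beta X$ is compact it is a $k_\omega$-space, and since $K$ is compact it is closed in the Hausdorff space $\beta X$. Thus Corollary \ref{c:Free-1} applies and yields that the canonical map ${\bar\jmath}:\VV(K)\to\VV(K,\beta X)$ extending the inclusion $K\hookrightarrow\beta X$ is a topological isomorphism onto the closed subspace $\VV(K,\beta X)$ of $\VV(\beta X)$. I would then invoke the continuous injective linear operator ${\bar\beta}:\VV(X)\to\VV(\beta X)$ from the proof of Theorem \ref{t:Free-exists}(v); it extends the natural map $X\to\beta X$ and so acts as the identity on $K$. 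Consequently ${\bar\beta}$ carries $\VV(K,X)$ bijectively onto $\VV(K,\beta X)$, and since ${\bar\jmath}$ and ${\bar\beta}\circ{\bar\iota}$ are continuous linear maps agreeing on the generating set $K$ one gets the factorization ${\bar\jmath}={\bar\beta}\circ{\bar\iota}$.

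Finally I would read off continuity of ${\bar\iota}^{-1}$ from this factorization. Restricting ${\bar\beta}$ gives a continuous map ${\bar\beta}|_{\VV(K,X)}:\VV(K,X)\to\VV(K,\beta X)$, and composing it with the continuous inverse ${\bar\jmath}^{-1}$ supplied by Corollary \ref{c:Free-1} gives ${\bar\jmath}^{-1}\circ{\bar\beta}|_{\VV(K,X)}={\bar\iota}^{-1}$, which is therefore continuous. Hence ${\bar\iota}$ is a homeomorphism onto $\VV(K,X)$, i.e. $\VV(K,X)$ is $\VV(K)$. The main obstacle is exactly this reverse-inequality step: a priori $\tau$ could be strictly coarser than $\pmb{\mu}_K$, and the only available leverage for excluding this is that $\beta X$ is a $k_\omega$-space, where Theorem \ref{t:Free-k-space} (via Corollary \ref{c:Free-1}) pins the topology down, together with the injectivity of ${\bar\beta}$, which transports that fine topology faithfully back to $\VV(K,X)$.
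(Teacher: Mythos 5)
Your proof is correct and follows essentially the same route as the paper: both pass to the Stone--\v{C}ech compactification, apply Corollary \ref{c:Free-1} to the compact (hence closed) subspace $K$ of the $k_\omega$-space $\beta X$, and deduce from the factorization ${\bar\jmath}={\bar\beta}\circ{\bar\iota}$ that the subspace topology on $\VV(K,X)$ coincides with the free topology $\pmb{\mu}_K$. You merely make explicit the continuity of ${\bar\iota}^{-1}$ via ${\bar\jmath}^{-1}\circ{\bar\beta}|_{\VV(K,X)}$, which the paper leaves implicit in its chain $\VV(K)\to\VV(K,X)\to\VV(K)$.
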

\begin{proof}
%First we prove the proposition for the case when also $X$ is compact. In this case the proof of Theorem \ref{t:Free-k-space} shows that $\VV(X)$ and $\VV(K)$ are the inductive limits of its compact subspaces $X_n$ and $K_n$ respectively. Clearly, $K_n$ is a compact subspace of $X_n$, and hence the restrictions of $\pmb{\mu}_X$ and $\pmb{\mu}_K$ onto $K_n$ coincide for every $n\in\NN$. Thus $\pmb{\mu}_X|_{\VV_K} = \pmb{\mu}_K$. Therefore the canonical inclusion $\VV(K)$ into $\VV(X)$ is an embedding onto the closed subspace $\VV(K,X)$ of $\VV(X)$, see Lemma \ref{l:Free-subspace}.
%Let now the space $X$ be arbitrary.
Denote by $\beta X$ the Stone--\v{C}ech compactification of $X$. So we obtain the following commutative diagram
\[
\xymatrix{
K  \ar@{->}[d] \ar@{->}[r]^i & X \ar@{->}[d]\ar@{->}[r]^\beta & \beta X \ar@{->}[d]\\
\VV(K) \ar@{->}[r]^{{\bar i}} & \VV(X) \ar@{->}[r]^{{\bar \beta}} & \VV(\beta X) } .
\]
Then $\VV(K,\beta X)=\VV(K)$ by Corollary \ref{c:Free-1}. So we obtain
\[
\VV(K) \stackrel{\bar i}{\longrightarrow}  {\bar i}\big( \VV(K)\big)=\VV(K,X) \stackrel{\bar \beta}{\longrightarrow}  \VV(K) ,
\]
and so $\VV(K)=\VV(K,X)$.
%Since $\VV(K)$ is a subspace of $\VV(\beta X)$ by the first step, we obtain that $\VV(K)$ is also a subspace of $\VV(X)$. The image of $\VV(K)$ is closed in $\VV(X)$ by Lemma \ref{l:Free-subspace}.
\end{proof}

\begin{proposition} \label{p:Free-Metrizable}
If $X$ is a $k_\omega$-space, then every metrizable (in particular, Banach) vector subspace $E$ of $\VV(X)$ is finite-dimensional.
\end{proposition}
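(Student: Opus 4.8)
The plan is to replace $E$ by its closure and then reduce to the classical Riesz theorem that a locally compact Hausdorff topological vector space is finite-dimensional. Let $\bar E$ denote the closure of $E$ in $\VV(X)$; since the closure of a vector subspace is again a vector subspace, $\bar E$ is a topological vector space. By Theorem \ref{t:Free-k-space} the space $\VV(X)$ is a $k_\omega$-space, and by Corollary \ref{c:Free-complete} it is complete. Hence $\bar E$, being closed in $\VV(X)$, is complete, and since a closed subspace of a $k_\omega$-space is again a $k_\omega$-space, $\bar E$ is itself a $k_\omega$-space, say $\bar E = \underrightarrow{\lim}\,(\bar E\cap X_n)$ with $X_n=\SP_n(C_n)$; in particular every compact subset of $\bar E$ lies in a single $\bar E\cap X_n$.

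The first real point to check is that $\bar E$ is metrizable. As $E$ is dense in the complete space $\bar E$, the space $\bar E$ is a realization of the completion of the metrizable topological vector space $E$. By the Birkhoff--Kakutani theorem $E$ has a countable base of neighbourhoods of $0$; their closures in $\bar E$ form a countable base of neighbourhoods of $0$ in $\bar E$, which together with Hausdorffness gives that $\bar E$ is again metrizable. Thus $\bar E$ is a metrizable, in particular first-countable, $k_\omega$-space.

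Next I would prove the general fact that every first-countable $k_\omega$-space $Z=\underrightarrow{\lim}\,Z_n$ is locally compact. Fixing $z\in Z$ and a decreasing neighbourhood base $\{W_m\}$ at $z$, suppose $z$ had no compact neighbourhood. Then each $W_m$ fails to be contained in $Z_m$, for otherwise $\overline{W_m}\subseteq Z_m$ would be a compact neighbourhood of $z$; so one may choose $p_m\in W_m\setminus Z_m$. Since $p_m\to z$, the set $\{p_m : m\in\NN\}\cup\{z\}$ is compact and therefore lies in some single $Z_k$, contradicting $p_m\notin Z_m\supseteq Z_k$ for all $m\ge k$. Applying this to $\bar E$ shows that $\bar E$ is locally compact.

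Finally, $\bar E$ is a locally compact Hausdorff topological vector space, hence finite-dimensional by the Riesz theorem (which requires neither completeness of $\bar E$ here beyond what we have, nor local convexity). Consequently $E\subseteq \bar E$ is finite-dimensional. I expect the main obstacle to be the metrizability step: one must ensure that passing to the closure preserves both the $k_\omega$ property \emph{and} metrizability, and this is precisely where completeness of $\VV(X)$ (Corollary \ref{c:Free-complete}) is indispensable. Working with $E$ directly would fail, because $E$ need not be closed in $\VV(X)$, so a relatively compact neighbourhood inside $E$ need not have compact closure, and the local-compactness argument could not be run.
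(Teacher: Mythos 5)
Your proof is correct and follows essentially the same route as the paper's: pass to the closure $\bar E$, observe it is a metrizable closed (hence $k_\omega$) subspace of the $k_\omega$-space $\VV(X)$, deduce local compactness, and invoke Riesz's theorem that a locally compact Hausdorff topological vector space is finite-dimensional --- the paper merely cites Engelking 3.4.E and K\"othe \S 15.7 for the two facts you prove by hand. One correction to your closing remark: completeness of $\VV(X)$ is not actually indispensable (or needed at all) --- your own Birkhoff--Kakutani argument for the metrizability of $\bar E$ uses only the density of $E$ in $\bar E$, not completeness, and accordingly the paper's proof never invokes Corollary \ref{c:Free-complete}.
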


\begin{proof}
Let $\bar{E}$ be the closure of $E$ in $\VV(X)$. Then $\bar{E}$ is a metrizable closed subspace of $\VV(X)$. As $\VV(X)$ is a $k_\omega$-space by Theorem \ref{t:Free-k-space}, $\bar{E}$ is also a $k_\omega$-space. Therefore $\bar{E}$ is a locally compact by \cite[3.4.E]{Eng}, and hence it is finite-dimensional, see \cite[\S 15.7]{Kothe}.  Thus $E$ is finite-dimensional as well.
\end{proof}

On the other hand, we now see that every infinite-dimensional space $\VV(X)$ contains the space $\phi =\VV(\NN)$. But $\phi$ is the inductive limit of $\IR^n$, and so $\phi$ is a locally convex space and therefore $\phi=L(\NN)$.

For every $n\in \NN$, set
\begin{equation} \label{equation-1}
T_n :=1+\cdots +n \quad \mbox{ and } \quad S_n :=T_1+\cdots +T_n.
\end{equation}
\begin{theorem} \label{t:Free-V(N)}
If $X$ is an infinite Tychonoff space, then $\VV(X)$ contains a closed vector subspace which is topologically isomorphic to $\phi= \VV(\NN)=L(\NN)$.
\end{theorem}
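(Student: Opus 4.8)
The plan is to realise $\phi=\VV(\NN)$ as the closed linear span of an explicit countable, relatively discrete family of vectors in $\VV(X)$, and to establish the required topology and closedness by transporting the problem into the $k_\omega$-space $\VV(\beta X)$, where Proposition \ref{p:Free-MMO-2} is available. Since $X$ is infinite I would first fix a sequence $x_1,x_2,\dots$ of distinct points of $X$ and group them into consecutive finite blocks $B_n:=\{x_{T_{n-1}+1},\dots,x_{T_n}\}$ of size $n$ (with $T_0:=0$), and set
\[
v_n:=\sum_{x\in B_n} x \in \VV(X)\subseteq \VV(\beta X).
\]
Because the blocks $B_n$ are pairwise disjoint, the family $Y:=\{v_n:n\in\NN\}$ is linearly independent, so $Y$ is a free vector-space basis of $\SP(Y)$. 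The crucial design feature is that the block sizes are strictly increasing.

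Next I would work inside $\VV(\beta X)$, where $\beta X$ is the Stone--\v{C}ech compactification; as $\beta X$ is compact it is a $k_\omega$-space with the constant decomposition $\beta X=\bigcup_n C_n$, $C_n:=\beta X$, and Theorem \ref{t:Free-k-space} gives $\VV(\beta X)=\bigcup_n \SP_n(\beta X)$. Since $v_n$ has reduced support of size exactly $n$, one has $v_n\in \SP_n(\beta X)\setminus \SP_{n-1}(\beta X)$; hence $Y\cap \SP_N(\beta X)$ is finite for every $N$, so $Y$ is closed and discrete in $\VV(\beta X)$ and $Y=\bigcup_m K_m$ with $K_m:=\{v_1,\dots,v_m\}$ is a $k_\omega$-decomposition inducing the inherited topology. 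The remaining hypothesis of Proposition \ref{p:Free-MMO-2} is the containment $\SP(Y)\cap \SP_N(\beta X)\subseteq \SP_m(K_m)$ for a suitable $m=m(N)$. This is the heart of the argument: if $w=\sum_i c_i v_i$ lies in $\SP_N(\beta X)$, then $w$ admits a representation with at most $N$ summands, so its reduced support $\bigcup_{c_i\neq 0} B_i$ has at most $N$ elements; since $\abs{B_i}=i$, this forces $c_i=0$ for all $i>N$, and counting the available summands bounds each $\abs{c_i}$ by $N^2$. Thus $w\in \SP_m(K_m)$ with $m:=\max(N,N^2)$. Proposition \ref{p:Free-MMO-2} then yields that $\SP(Y)$ is closed in $\VV(\beta X)$ and that $\SP(Y)=\VV(Y)$; as $Y$ is a countable discrete space, $\VV(Y)=\VV(\NN)=\phi$.

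Finally I would transfer this back to $\VV(X)$ through the canonical injective continuous operator $\bar\beta:\VV(X)\to \VV(\beta X)$ extending $X\hookrightarrow\beta X$. Since the $v_n$ lie in $\VV(X)$ and $\bar\beta(v_n)=v_n$, injectivity gives $\bar\beta^{-1}(\SP(Y))=\SP(Y)$, so closedness of $\SP(Y)$ in $\VV(\beta X)$ descends, by continuity of $\bar\beta$, to closedness in $\VV(X)$. The subspace topology $\sigma$ inherited from $\VV(X)$ is a vector topology making $\NN\to Y$ continuous, hence is dominated by the free topology $\phi$; and it dominates the topology induced from $\VV(\beta X)$, which is already $\phi$. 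Therefore $\sigma=\phi$, and $\SP(Y)$ is a closed copy of $\phi=\VV(\NN)=L(\NN)$ in $\VV(X)$.

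I expect the main obstacle to be precisely the containment estimate of the third paragraph: the generators must be chosen so that no low-level element of $\SP_N(\beta X)$ can covertly involve a high-index generator. Using blocks of strictly increasing size is exactly what makes this rigid, since the size of the reduced support is an invariant that small real coefficients cannot lower; this is presumably the purpose of the bookkeeping sequences $T_n$ and $S_n$ fixed before the theorem. The passage to $\beta X$ is likewise essential: it supplies a compact, hence $k_\omega$, ambient space to which Proposition \ref{p:Free-MMO-2} applies, and so lets one avoid any case analysis according to whether $X$ is pseudocompact or contains nontrivial convergent sequences.
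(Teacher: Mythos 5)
Your proof is correct and follows essentially the same route as the paper's: generators with pairwise disjoint supports of strictly increasing size, the key containment $\SP(Y)\cap\SP_N(\beta X)\subseteq \SP_{m}(K_{m})$, the $k_\omega$-machinery of Proposition \ref{p:Free-MMO-2} inside $\VV(\beta X)$, and the pullback along the injective continuous extension $\VV(X)\to\VV(\beta X)$ together with the maximality of the free topology over a discrete space. The only (harmless) deviations are that you drop the paper's anchor terms $n z_1$ in the generators, obtaining closedness and discreteness of $Y$ instead from the finiteness of $Y\cap\SP_N(\beta X)$ in the $k_\omega$-decomposition, and that your coefficient bound is $N^2$ rather than $N$, which still suffices since Proposition \ref{p:Free-MMO-2} only requires the existence of some $m(N)$.
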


\begin{proof}
First we assume that $X$ is an infinite compact space. Take arbitrarily a sequence $\{ z_n\}_{n\in\NN}$ of distinct elements of $X$. For every $n\in \NN$ and $S_n$ defined in (\ref{equation-1}), set
\[
\begin{split}
y_1 & := z_1, \quad y_2 := 2z_1 + z_2 + z_3, \\
y_n & := nz_1 + z_{S_{n-1}+1} + z_{S_{n-1}+2} +\cdots + z_{S_{n}}, \quad n>2,
\end{split}
\]
where the $S_n$ are as in (\ref{equation-1}) and ``$+$'' denotes the vector space addition in $\VV(X)$. Since the sequence $\{ nz_1\}_{n\in\NN}$ is discrete and closed in $\IR z_1 \subset\VV(X)$, the sequence $Y:= \{ y_n\}_{n\in\NN}$ is discrete and closed in $\VV(X)$. So $\SP(Y)=\VV(Y,X)$ is a closed vector subspace in $\VV(X)$ by Lemma \ref{l:Free-subspace}. Let us show that $\SP(Y)$ is topologically isomorphic to $\phi$. For every $n\in\NN$, set $K_n :=\{ y_1,\dots,y_n\}$.

 We claim that $\SP(Y) \cap \SP_n(X) \subseteq \SP_n(K_n)$. Indeed, fix $t\in \SP(Y) \cap \SP_n(X)$.  So there are distinct $x_1,\dots,x_n\in X$, $i_1<\dots<i_m$, nonzero real numbers $a_1,\dots,a_m$ and nonzero numbers $\lambda_1,\dots, \lambda_n \in [-n,n]$ such that
\[
\begin{split}
t & =a_1 y_{i_1} +\cdots + a_m y_{i_m} \\
& =\big(a_1  n_{i_1} +\cdots + a_m n_{i_m}\big)z_1 + \sum_{l=1}^m \sum_{j=1}^{i_l} a_l z_{S_{i_l -1}+j} \\
& = \lambda_1 x_1 +\cdots +\lambda_n x_n.
\end{split}
\]
But since all the elements $z_{S_{i_l -1}+j}$ are distinct elements of the canonical basis of $\VV(X)$, this equality implies that $i_m\leq n$ and $\{ a_1,\dots,a_m\} \subseteq \{ \lambda_1,\dots, \lambda_n\}$. Thus $t\in \SP_n(K_n)$.

The topology $\tau$ of $\SP(Y)$ induced from $\VV(X)$ is defined by the sequence $\{ \SP(Y) \cap \SP_n(X)\}_{n\in\NN}$ of compact sets by Theorem \ref{t:Free-k-space} and closedness of $\SP(Y)$. Note that for every $n\in \NN$ there is an  $m\in\NN$ such that $\SP_n(K_n) \subseteq \SP(Y) \cap \SP_m(X)$. This inclusion and the claim imply that $\tau$ coincides with  the free topology $\pmb{\mu}_Y$ on $\SP(Y)$ defined by the sequence $\{ \SP_n(K_n)\}_{n\in\NN}$.  Thus  $\SP(Y)$  is topologically isomorphic to $\phi$.

Now let the space $X$ be arbitrary and let $\beta X$ be the Stone-\v{C}ech compactification of $X$. So there is a continuous linear monomorphism ${\bar i}: \VV(X) \to \VV(\beta X)$. As $X$ is infinite, $\beta X$ contains a sequence $\{ z_n\}_{n\in\NN}$ of distinct elements of $X$. So $\VV(\beta X)$ contains a closed and discrete subset $Y$ such that $\SP(Y)$ is a closed vector subspace of $\VV(\beta X)$ which is topologically isomorphic to $\phi$ by the first step. Clearly, ${\bar i}^{-1}(Y)$ is a closed and discrete subset of $\VV(X)$ and $E:= {\bar i}^{-1}\big(\SP(Y)\big)$ is a closed vector subspace of $\VV(X)$. So the topology $\tau'$ on $E$ induced from $\VV(X)$ is finer than the free topology $\pmb{\mu}_Y$ on $E$. Therefore $\tau' =\pmb{\mu}_Y$. Thus $\phi$ is  topologically isomorphic to the closed vector subspace $E$ of $\VV(X)$.
\end{proof}
Theorem \ref{t:Free-V(N)} is of interest also because such a result does not hold for free locally convex spaces. Indeed,  Theorem 4.3 of \cite{LMP} states that if the free locally convex space $L(X)$ over a Tychonoff space $X$ embeds into $L(\II)$, where $\II=[0,1]$, then $X$ is a metrizable countable-dimensional compactum. In particular, the space $\phi=L(\NN)$ does not embed into $L(\II)$.

\begin{theorem} \label{t:Free-V[0,1]}
Let $\{ K_n\}_{n\in\NN}$ be a sequence of disjoint compact subsets of $\IR$. Then $\VV\big(\bigsqcup_{n\in\NN} K_n\big)$ embeds onto a closed vector subspace of $\VV(\II)$.
\end{theorem}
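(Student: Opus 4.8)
The plan is to realize $\VV\big(\bigsqcup_n K_n\big)$ as the span $\SP(Y)$ of a carefully chosen basis $Y\subseteq\VV(\II)$ and to apply Proposition \ref{p:Free-MMO-2} with $X=\II$ (so $C_n=\II$ and $\SP_n(C_n)=\SP_n(\II)$), exactly in the spirit of the proof of Theorem \ref{t:Free-V(N)}. Each $K_n\subseteq\IR$ is a compact metrizable space and therefore embeds affinely into any subinterval of $\II$. The obstacle is twofold: first, $\bigsqcup_n K_n$ carries the disjoint-union topology, whereas any copy of $\bigcup_n K_n$ lying inside $\II$ would let the pieces accumulate, so the disjoint-union topology must be manufactured at the level of $\VV(\II)$ using the linear structure; second, and more seriously, the pieces must be kept apart \emph{robustly} under linear combinations, so that a high-index piece cannot hide inside an element of $\SP_n(\II)$.

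To this end I would fix, for every $n\in\NN$ and every $r\in\{0,1,\dots,n\}$, an affine homeomorphic embedding $\varphi_n^{(r)}:K_n\to\II$, chosen so that the entire family of images $\{\varphi_n^{(r)}(K_n):n\in\NN,\ 0\le r\le n\}$ consists of pairwise disjoint compact subintervals of $\II$, with the intervals used by distinct pieces confined to pairwise disjoint closed intervals $J_n$ converging to a point outside $\bigcup_n J_n$; this is possible since only countably many disjoint copies are required. I then define $\Phi:\bigsqcup_n K_n\to\VV(\II)$ by
\[
\Phi|_{K_n}(x):=\varphi_n^{(0)}(x)+\varphi_n^{(1)}(x)+\cdots+\varphi_n^{(n)}(x),
\]
so that $\Phi(x)$ is a sum of $n+1$ distinct basis points of $\VV(\II)$, and set $Y:=\Phi\big(\bigsqcup_n K_n\big)$ with decomposition $D_m:=\Phi\big(\bigcup_{j\le m}K_j\big)$ (the sets $K_m$ of Proposition \ref{p:Free-MMO-2} being our $D_m$). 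Using $n+1$ independent copies rather than a single index-marking point is exactly what makes every element of the $n$-th piece carry support of size a multiple of $n+1$ that survives cancellation.

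The first two hypotheses of Proposition \ref{p:Free-MMO-2} are then routine. Reading off the coefficient of the copy $\varphi_n^{(0)}(x)$ shows simultaneously that the elements $\Phi(x)$ are linearly independent (so $Y$ is a free basis of $\SP(Y)$) and that each $\Phi|_{K_n}$ is injective; being a continuous bijection of a compact space, $\Phi|_{K_n}$ is a homeomorphism onto its image. To see that each $\Phi(K_n)$ is clopen in $Y$, lift continuous functions $g_n:\II\to\IR$ equal to $1$ on the part of $\II$ used by the $n$-th piece and $0$ on the parts used by the others; then $\bar g_n\equiv n+1$ on $\Phi(K_n)$ and $\equiv 0$ on every other piece, so the pieces are separated. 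Hence the subspace topology on $Y$ is the disjoint-union topology and $Y=\bigcup_m D_m$ is a $k_\omega$-decomposition inducing it, with $\Phi$ a homeomorphism of $\bigsqcup_n K_n$ onto $Y$.

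The third hypothesis is the heart of the matter. Given $t\in\SP(Y)\cap\SP_n(\II)$, as an element of $\SP_n(\II)$ it has at most $n$ basis points and all coefficients in $[-n,n]$. Writing $t=\sum_k c_k\Phi(w_k)$ uniquely in the basis $Y$, with $w_k\in K_{n_k}$ distinct and $c_k\neq0$, and expanding into the $\II$-basis, the supports of the various $\Phi(w_k)$ are pairwise disjoint and $\Phi(w_k)$ contributes exactly $n_k+1$ basis points, each with coefficient $c_k$; since no cancellation can occur, $\supp(t)$ has exactly $\sum_k(n_k+1)$ elements. Thus $\sum_k(n_k+1)\le n$, which at once bounds the number of terms by $n$, each index $n_k$ by $n-1$, and (reading the coefficient of $\varphi_{n_k}^{(0)}(w_k)$) each $|c_k|$ by $n$, so $t\in\SP_n(D_n)$ and $m=n$ works. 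Proposition \ref{p:Free-MMO-2} then gives that $\SP(Y)=\VV(Y)$ is a closed vector subspace of $\VV(\II)$, and since $\Phi$ is a homeomorphism, $\VV(Y)$ is topologically isomorphic to $\VV\big(\bigsqcup_n K_n\big)$, as required. The crux, and the reason a single marking point as in Theorem \ref{t:Free-V(N)} does not suffice, is precisely the cancellation among several elements of one high-index piece; the $n+1$ disjoint copies force the support to grow with the index in a cancellation-proof way, yielding the uniform containment $\SP(Y)\cap\SP_n(\II)\subseteq\SP_n(D_n)$.
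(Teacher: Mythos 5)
Your proof is correct and follows essentially the same route as the paper's: both realize $\VV\big(\bigsqcup_{n\in\NN} K_n\big)$ as $\SP(Y)$ for a free basis $Y\subseteq\VV(\II)$ obtained by sending each point of the $n$-th piece to a sum of (growing with $n$) many disjoint homeomorphic copies of that piece placed in disjoint subintervals of $\II$, and both conclude via Proposition \ref{p:Free-MMO-2} using the same disjoint-support counting to verify $\SP(Y)\cap\SP_n(\II)\subseteq\SP_m(D_m)$. The remaining differences are implementation details: the paper uses $[S_n/2]$ copies instead of your $n+1$, first normalizes each $K_n$ to an interval $I_n$ via Corollary \ref{c:Free-1}, and identifies the subspace topology on $Y$ through a closed-map argument rather than your separating lifted functionals $\bar g_n$, while both arguments share the same harmless imprecision that a reduced element of $\SP_n(\II)$ may have coefficients up to $n^2$ (so one should take $m=n^2$ rather than $m=n$, which Proposition \ref{p:Free-MMO-2} permits).
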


\begin{proof}
Take two sequences $\{ a_n\}_{n\in\NN},\{b_n\}_{n\in\NN} \subset \II$ such that $a_1=0$ and $a_n<b_n<a_{n+1} <1$ for every $n\in\NN$. For every $n\in\NN$, set $I_n :=[a_n,b_n]$ and $X_n :=\bigcup_{i\leq n} I_i$. Since each $K_n$ is homeomorphic to a closed subset of $I_n$, by Corollary \ref{c:Free-1}, we can assume that $K_n =I_n$ for every $n\in\NN$. Set $X:= \bigsqcup_{n\in\NN} I_n$.

For every $n\in\NN$ and $1\leq i\leq A_n :=[S_n/2]$, the integer part of $S_n/2$ as defined in (\ref{equation-1}), we define the closed interval
\[
I_{i,n} := \left[ a_n + \frac{b_n -a_n}{S_n} (2i-1), a_n + \frac{b_n -a_n}{S_n} 2i\right],
\]
and define the homeomorphism $g_{i,n}: I_n \to I_{i,n}$ by
\[
g_{i,n}(x):=  \frac{1}{S_n} x + a_n \left( 1- \frac{1}{S_n}\right) +  \frac{b_n -a_n}{S_n}(2i-1).
\]
For every $n\in\NN$, define the map $h_n: I_n \to \VV(\II)$ by
\[
h_n(x) := g_{1,n}(x) + g_{2,n}(x)+\cdots + g_{A_n,n}(x),
\]
where ``$+$'' denotes the vector space addition in $\VV(\II)$. Now we define the map $\chi: X\to \VV(\II)$ setting
\[
\chi(x):= h_n(x), \quad \mbox{ if } x\in I_n.
\]
Clearly, $\chi$ is continuous and one-to-one. Set $Y:=\chi(X)$ and $Y_n:= \chi(X_n)$ for every $n\in\NN$.

We claim that $\SP(Y) \cap \SP_n(\II) \subseteq \SP_n(Y_n)$ for every $n\in\NN$. Indeed, fix $t\in \SP(Y) \cap \SP_n(\II)$.  So there are distinct $x_1,\dots,x_n\in \II$, distinct $y_1,\dots,y_m \in Y$, nonzero real numbers $a_1,\dots,a_m$ and nonzero numbers $\lambda_1,\dots, \lambda_n \in [-n,n]$ such that
\begin{equation} \label{equ:Free-1}
t  =a_1 y_{1} +\cdots + a_m y_{m} =\lambda_1 x_1 +\cdots +\lambda_n x_n.
\end{equation}
For every $1\leq i\leq m$ take $x_i\in X$ and $n_i\in \NN$ such that
\[
y_i=\chi(x_i) \mbox{ and } x_i\in I_{n_i}, \mbox{ so } y_i=\sum_{j\leq A_{n_i}} g_{j,n_i}(x_i).
\]
Taking into account that all $g_{j,n_i}(x_i)$ are distinct elements of the basis $\II$ of $\VV(\II)$, equality (\ref{equ:Free-1}) implies that $m\leq n$ and $\{ a_1,\dots,a_m\} \subseteq \{ \lambda_1,\dots, \lambda_n\}$. Thus $t\in \SP_n(Y_n)$.

For every $n\in\NN$, take the maximal $l(n)\in \NN$ such that $S_{l(n)} \leq n$ (note that $S_1=1$). Then the same argument as in the claim shows that $Y\cap \SP_n(\II)=Y_{S_{l(n)}}$, for every $n\in\NN$. Fix a closed subset $F$ of $X$. Then for every $n\in\NN$ we have
\[
\chi(F) \cap \SP_n(\II)=\chi(F)\cap Y\cap \SP_n(\II)=\chi(F) \cap Y_{S_{l(n)}}=\chi(F)\cap \chi\big( X_{S_{l(n)}}\big)=\chi\big( F\cap X_{S_{l(n)}}\big)
\]
is a closed subset of $\SP_n(\II)$. Since $\VV(\II)=\bigcup_n \SP_n(\II)$ is a $k_\omega$-space by Theorem \ref{t:Free-k-space}, we obtain that $\chi(F) $ is closed in $\VV(\II)$. Therefore $\chi$ is a closed map. Thus $\chi$ is a homeomorphism of $X$ onto $Y$.
Finally, Proposition \ref{p:Free-MMO-2} implies that $\SP(Y)$ is a closed subspace of $\VV(\II)$ and is topologically isomorphic to $\VV(X)$.
\end{proof}

%%%%%%%%%%%%%%%%%%%%%%%%%%%%%%%%%%%%%%%%
%%%%%%%%%%%%%%%%%%%%%%%%%%%%%%%%%%%%%%%%
%%%%%%%%%%%%%%%%%%%%%%%%%%%%%%%%%%%%%%%%
%%%%%%%%%%%%%%%%%%%%%%%%%%%%%%%%%%%%%%%%
%%%%%%%%%%%%%%%%%%%%%%%%%%%%%%%%%%%%%%%%
%%%%%%%%%%%%%%%%%%%%%%%%%%%%%%%%%%%%%%%%

\section{Topological properties of free topological vector spaces}

%%%%%%%%%%%%%%%%%%%%%%%%%%%%%%%%%%%%%%%%
%%%%%%%%%%%%%%%%%%%%%%%%%%%%%%%%%%%%%%%%
%%%%%%%%%%%%%%%%%%%%%%%%%%%%%%%%%%%%%%%%
%%%%%%%%%%%%%%%%%%%%%%%%%%%%%%%%%%%%%%%%
%%%%%%%%%%%%%%%%%%%%%%%%%%%%%%%%%%%%%%%%

%\begin{question}
%What is an intrinsic description of the topology $\VV(X)$ when $X$ is not a $k_\omega$-space?
%\end{question}

For  a subset $A$ of a tvs $E$, we denote the convex hull of $A$ by  $\conv(A)$, so
\[
\conv(A) :=\left\{ \lambda_1 a_1 +\cdots+ \lambda_n a_n : \; \lambda_1,\dots,\lambda_n \geq 0, \, \sum_{i=1}^n \lambda_i =1, \, a_1,\dots,a_n\in A, \, n\in\NN\right\}.
\]
Denote by $\mathbf{LCS}$ the category of all locally convex spaces and continuous linear operators. Let $(E,\tau)$ be a topological vector space and let $\Nn(E)$ be a base of neighborhoods at zero of $E$. Then the family $\widehat{\Nn}(E):=\big\{ \conv(U) : U\in \Nn(E)\big\}$ forms a locally convex vector topology $\widehat{\tau}$ on $E$. So $\widehat{\tau}$ is the strongest locally convex vector topology on $E$ which is coarser than the origin topology $\tau$. The lcs $(E,\widehat{\tau})$ is called the {\em locally convex modification} of $E$. Let $E$ and $H$ be topological vector spaces and $T:E\to H$ a continuous linear operator. Define the functor $\mathcal{L}:\mathbf{TVS}\to\mathbf{LCS}$ by the assignment
\[
(E,\tau) \to \mathcal{L}(E):=(E,\widehat{\tau}), \quad \mathcal{L}(T):= T.
\]

For a Tychonoff space $X$, the underlying group of $A(X)$ we denote by $A_a(X)$, and  the underlying vector spaces of $L(X)$ and $\VV(X)$ are denoted by $L_a(X)$ and $\VV_a(X)$, respectively. Below we obtain some relations between  $\VV(X)$, $L(X)$ and $A(X)$.
\begin{proposition} \label{p:Free-tvs-lcs}
Let $X$ be a Tychonoff space. Then
\begin{enumerate}
\item[{\rm (i)}] $\mathcal{L}\big(\VV(X)\big)=L(X)$; %if $\Nn(\pmb{\mu}_X)$ is a base of neighborhoods at zero of $\VV(X)$, then the set $\widehat{\Nn}_X :=\big\{ \conv(U) : U\in \Nn(\pmb{\mu}_X) \big\}$ is a base at zero for $\pmb{\nu}_X$;
\item[{\rm (ii)}] the identity map $id_X:X\to X$ extends to a canonical homomorphisms $id_{A(X)}:A(X)\to \VV(X)$, which is an embedding of topological groups;
\item[{\rm (iii)}] $id_{A(X)}\big(A(X)\big)$ is closed in $\VV(X)$.
\end{enumerate}
\end{proposition}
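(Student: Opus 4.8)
I would prove the three parts in order, since (i) feeds directly into (ii). For part (i) the argument should be purely formal. I would first note that $\mathcal{L}(\VV(X))$ has underlying vector space $\VV_a(X)=L_a(X)$, and that $X\hookrightarrow\VV(X)\to\mathcal{L}(\VV(X))$ (the last map being the identity) is continuous because $\widehat{\pmb{\mu}_X}\le\pmb{\mu}_X$. Then I would verify the universal property of the free locally convex space: for continuous $f\colon X\to F$ with $F$ an lcs, the universal property of $\VV(X)$ yields a continuous linear $\bar f\colon\VV(X)\to F$; since $F$ is locally convex, the preimage of any convex zero-neighbourhood is a convex $\pmb{\mu}_X$-neighbourhood and hence a $\widehat{\pmb{\mu}_X}$-neighbourhood, so $\bar f$ remains continuous on $\mathcal{L}(\VV(X))$. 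Uniqueness of $L(X)$ then gives $\mathcal{L}(\VV(X))=L(X)$. (Categorically this is just the statement that $\mathcal{L}$ is left adjoint to $\mathbf{LCS}\hookrightarrow\mathbf{TVS}$, so $\mathcal{L}\circ\VV$ is the left adjoint defining $L$.)

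\emph{Part (ii).} The continuous map $X\hookrightarrow\VV(X)$ into the abelian topological group underlying $\VV(X)$ extends, by the universal property of $A(X)$, to a continuous homomorphism $id_{A(X)}\colon A(X)\to\VV(X)$; algebraically this is the inclusion of the free abelian group $A_a(X)=\{\sum_i n_i x_i:\ n_i\in\mathbb{Z},\ x_i\in X\}$ into $\VV_a(X)$, which is injective because $X$ is a basis. The content is that $id_{A(X)}$ is a homeomorphism onto its image. I would compare three topologies on the set $A_a(X)$: the free-group topology $\tau_A$, and the subspace topologies $\sigma_{\VV}$ and $\sigma_L$ inherited from $\VV(X)$ and from $L(X)$. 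Continuity of $id_{A(X)}$ gives $\sigma_{\VV}\le\tau_A$; part (i) gives $\pmb{\nu}_X=\widehat{\pmb{\mu}_X}\le\pmb{\mu}_X$ and hence $\sigma_L\le\sigma_{\VV}$; and the classical theorem (Uspenskij) that $A(X)$ embeds as a topological subgroup of $L(X)$ gives $\tau_A=\sigma_L$. Chaining, $\sigma_L\le\sigma_{\VV}\le\tau_A=\sigma_L$ forces $\tau_A=\sigma_{\VV}$, which is exactly the embedding claim.

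\emph{Part (iii).} Here I would reduce to the compact case and then use the $k_\omega$ machinery. With $\beta X$ the Stone--\v{C}ech compactification and $\bar\beta\colon\VV(X)\to\VV(\beta X)$ the canonical continuous monomorphism, the fact that $X\hookrightarrow\beta X$ sends distinct basis points to distinct basis points shows that $\sum_i\lambda_i x_i\in A_a(X)$ iff $\bar\beta(\sum_i\lambda_i x_i)\in A_a(\beta X)$, i.e. $A_a(X)=\bar\beta^{-1}(A_a(\beta X))$. By continuity of $\bar\beta$ it therefore suffices to show $A_a(\beta X)$ is closed in $\VV(\beta X)$. Now $\beta X$ is compact, hence a $k_\omega$-space, so by Theorem \ref{t:Free-k-space} the space $\VV(\beta X)=\bigcup_n\SP_n(\beta X)$ carries the weak topology of this sequence of compacta; thus $A_a(\beta X)$ is closed iff each $A_a(\beta X)\cap\SP_n(\beta X)$ is closed in $\SP_n(\beta X)$. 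Since $\beta X$ is a basis, that intersection is precisely the set of integer combinations $\sum_{i=1}^{k}m_i x_i$ with $k\le n$, distinct $x_i\in\beta X$ and $m_i\in\mathbb{Z}\cap[-n,n]$; for each fixed coefficient pattern this is a continuous image of a power of the compactum $\beta X$, so the intersection is a finite union of compacta and hence compact. Pulling back along $\bar\beta$ then yields (iii).

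The main obstacle is the reverse topology inequality in (ii): continuity only delivers $\sigma_{\VV}\le\tau_A$, and showing that the free-group topology is no finer than the subspace topology is the genuinely hard point. My plan offloads this onto the known embedding $A(X)\hookrightarrow L(X)$ together with (i); a self-contained argument would instead have to match each continuous pseudometric defining $\tau_A$ with a suitable $\pmb{\mu}_X$-neighbourhood, which is the delicate step. By comparison (i) is formal, and (iii)---once reduced to $\beta X$---is a routine compactness count.
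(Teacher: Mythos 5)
Your parts (i) and (ii) are essentially the paper's own argument: (i) is the formal verification that $\widehat{\pmb{\mu}_X}$ satisfies the universal property of $L(X)$ (the paper simply says this "follows from the definitions"), and (ii) is the same three-topology sandwich $\sigma_L\le\sigma_{\VV}\le\tau_A=\sigma_L$; the paper also quotes the classical theorem that $\tau_{A(X)}=\pmb{\nu}_X|_{A_a(X)}$, citing Tkachenko rather than Uspenskij (both proved it independently, so the attribution difference is immaterial). Part (iii) is where you genuinely diverge. The paper again invokes Tkachenko: $A_a(X)$ is closed in the coarser topology $\pmb{\nu}_X$, hence closed in the finer topology $\pmb{\mu}_X$ --- one line. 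You instead give a self-contained proof relying only on the paper's Theorem \ref{t:Free-k-space}: pull back along $\bar\beta$, then check that the integer lattice meets each $\SP_n(\beta X)$ in a compact set. This buys independence from the closedness half of Tkachenko's theorem at the cost of length; both routes are valid. Two small repairs are needed in your compactness count. First, an element of $\SP_n(\beta X)$ may be a combination of $n$ \emph{non-distinct} points of $\beta X$, so after reduction its integer coefficients are only bounded by $\sum_j|m_j|\le n^2$, not by $n$ as you claim. Second, for a fixed coefficient pattern $(m_1,\dots,m_k)$ the set of combinations with \emph{distinct} $x_i$ is not a continuous image of a power of $\beta X$ (the distinct tuples form a non-closed, hence non-compact, subset of $(\beta X)^k$); you should instead take the image $K_p$ of the full power $(\beta X)^k$ under $(x_1,\dots,x_k)\mapsto\sum_i m_ix_i$ --- coincidences of points only merge integer coefficients into integer coefficients, so $K_p\subseteq A_a(\beta X)$ still holds --- and then observe that $A_a(\beta X)\cap\SP_n(\beta X)=\bigl(\bigcup_{p}K_p\bigr)\cap\SP_n(\beta X)$, where $p$ runs over the finitely many patterns with $k\le n$ and $\sum_j|m_j|\le n^2$, an intersection of two compacta. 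With these adjustments your proof is complete.
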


\begin{proof}
(i) follows from the definitions of free lcs and free tvs.

(ii)  As $\pmb{\nu}_X \leq \pmb{\mu}_X $, we obtain $\pmb{\nu}_X |_{A_a(X)} \leq \pmb{\mu}_X|_{A_a(X)}$. On the other hand, by the definition of $A(X)$, there is a continuous homomorphism from $A(X)$ into  $\VV(X)$, and since the topology $\tau_{A(X)}$ of $A(X)$ is $\pmb{\nu}_X |_{A_a(X)}$ by \cite{Tkac}, we obtain that $\pmb{\nu}_X |_{A_a(X)} \geq \pmb{\mu}_X|_{A_a(X)}$. Therefore $\pmb{\nu}_X |_{A_a(X)} = \pmb{\mu}_X|_{A_a(X)}=\tau_{A(X)}$. Thus $id_{A(X)}$ is an embedding.

(iii) By \cite{Tkac},  $A(X)$ is closed in the topology $\pmb{\nu}_X$. Thus $A(X)$ is also closed in $\pmb{\mu}_X$.
\end{proof}

Proposition \ref{p:Free-tvs-lcs} allows us  to reduce easily the study of some topological properties of $L(X)$ and $\VV(X)$ to the study of the corresponding properties for $A(X)$. We demonstrate below such a reduction.

It is known that $A(X)$ is Lindel\"{o}f if and only if $X^n$ is Lindel\"{o}f for every $n\in\NN$, see Corollary 7.1.18 in \cite{ArT}. An analogous result holds for $L(X)$ and $\VV(X)$.
\begin{proposition} \label{p:Free-Lindelof}
Let $X$ be a Tychonoff space. Then
\begin{enumerate}
\item[{\rm (i)}] $L(X)$ is Lindel\"{o}f if and only if $X^n$ is Lindel\"{o}f for every $n\in\NN$;
\item[{\rm (ii)}] $\VV(X)$ is Lindel\"{o}f if and only if $X^n$ is Lindel\"{o}f for every $n\in\NN$.
\end{enumerate}
\end{proposition}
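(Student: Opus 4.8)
The plan is to prove both statements simultaneously, reducing the ``only if'' direction to the already known theorem for $A(X)$ and establishing the ``if'' direction directly from the decomposition $\VV(X)=\bigcup_{n\in\NN}\SP_n(X)$. Throughout I use standard permanence properties of the Lindel\"of property: a closed subspace of a Lindel\"of space is Lindel\"of, a continuous image of a Lindel\"of space is Lindel\"of, the product of a compact space and a Lindel\"of space is Lindel\"of (via the tube lemma), and a countable union of Lindel\"of subspaces is again Lindel\"of.

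For the forward implication in both (i) and (ii), I would invoke Proposition \ref{p:Free-tvs-lcs}. By part (ii) the free abelian topological group $A(X)$ sits as a topological subgroup carrying its intrinsic topology $\tau_{A(X)}$ inside both $L(X)$ and $\VV(X)$, and by part (iii) (together with its proof via \cite{Tkac}) its image is closed in $\VV(X)=(\VV_X,\pmb{\mu}_X)$ and likewise in $L(X)=(\VV_X,\pmb{\nu}_X)$. Hence if either $L(X)$ or $\VV(X)$ is Lindel\"of, then $A(X)$, being a closed subspace, is Lindel\"of as well; the known characterisation (Corollary 7.1.18 in \cite{ArT}) then yields that $X^n$ is Lindel\"of for every $n\in\NN$.

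For the backward implication, assume $X^n$ is Lindel\"of for every $n\in\NN$. Fix $n$ and consider the continuous map $T_n:[-n,n]^n\times X^n\to\VV(X)$ given by $T_n\big((a_1,\dots,a_n)\times(x_1,\dots,x_n)\big)=a_1x_1+\cdots+a_nx_n$, whose image is exactly $\SP_n(X)$; continuity holds because $\VV(X)$ is a topological vector space. Since $[-n,n]^n$ is compact and $X^n$ is Lindel\"of, the product $[-n,n]^n\times X^n$ is Lindel\"of, so its continuous image $\SP_n(X)$ is a Lindel\"of subspace of $\VV(X)$. As $\VV(X)=\SP(X)=\bigcup_{n\in\NN}\SP_n(X)$ by Theorem \ref{t:Free-exists}(ii), the space $\VV(X)$ is a countable union of Lindel\"of subspaces and is therefore Lindel\"of, which proves (ii). Finally, since $\pmb{\nu}_X\le\pmb{\mu}_X$, the identity map $id:\VV(X)\to L(X)$ is continuous and onto, so $L(X)$ is a continuous image of the Lindel\"of space $\VV(X)$ and hence Lindel\"of, which proves (i).

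I do not expect a genuinely hard step: the content here is organisational rather than technical. The only points needing care are verifying that $A(X)$ really embeds as a closed subspace with its own topology in both free spaces---which is precisely what Proposition \ref{p:Free-tvs-lcs} supplies---and recording that $[-n,n]^n\times X^n$ is Lindel\"of because one factor is compact. The decomposition $\VV(X)=\bigcup_n\SP_n(X)$ then does the rest, and the inequality $\pmb{\nu}_X\le\pmb{\mu}_X$ transfers the conclusion from $\VV(X)$ to $L(X)$.
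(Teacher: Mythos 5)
Your proof is correct and follows essentially the same route as the paper: the ``only if'' direction goes through the closed copy of $A(X)$ (Proposition \ref{p:Free-tvs-lcs}) and Corollary 7.1.18 of \cite{ArT}, and the ``if'' direction rests on the continuous maps $T_n$ defined on $[-n,n]^n\times X^n$ together with the fact that a product of a compact and a Lindel\"of space is Lindel\"of. The only cosmetic differences are that the paper packages the sets $\SP_n(X)$ as a single continuous image of the Lindel\"of disjoint sum $\bigsqcup_{n\in\NN}\big([-n,n]^n\times X^n\big)$ rather than as a countable union of Lindel\"of subspaces, and proves (i) ``in an analogous manner'' where you instead transfer it from (ii) via the continuous identity map $\VV(X)\to L(X)$.
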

\begin{proof}
We prove only (ii) as (i) is proved in an analgous manner. Assume that $\VV(X)$ is a Lindel\"{o}f space.  By Proposition \ref{p:Free-tvs-lcs}, $A(X)$ is a closed subspace of $L(X)$. Hence $A(X)$ is also Lindel\"{o}f. Therefore $X^n$ is Lindel\"{o}f for every $n\in\NN$ by Corollary 7.1.18 of \cite{ArT}.

Conversely, let $X^n$ be a Lindel\"{o}f space for every $n\in\NN$. Then the disjoint sum
\[
Y:= \bigsqcup_{n\in \NN} Y_n, \mbox{ where } Y_n:= [-n,n]^n \times X^n,
\]
is also  a Lindel\"{o}f space, see Corollary 3.8.10 in \cite{Eng}. Consider the map $T:Y\to \VV(X)$ defined by
\[
T(y):= T_n(y) \mbox{ if } y=(a_1,\dots,a_n)\times(x_1,\dots,x_n) \in Y_n \mbox{ and } T_n \big( y\big) := a_1 x_1 +\cdots +a_n x_n.
\]
Clearly, the map $T$ is continuous. Thus $\VV(X)$ is a Lindel\"{o}f space.
\end{proof}

Bel'nov \cite{Belnov} proved that if a topological group $G$ is algebraically generated by a Lindel\"{o}f subspace, then $G$ is topologically isomorphic to a subgroup of the product of some family of second-countable groups. As $\VV(X)$ and $L(X)$ are algebraically generated by the continuous image of $[-1,1]\times X$ and a product of a compact space and a Lindel\"{o}f space is also Lindel\"{o}f, we obtain

\begin{proposition} \label{p:Free-Belnov}
Let $X$ be a Lindel\"{o}f space. Then $\VV(X)$ and $L(X)$ are  topologically isomorphic to a subgroup of the product of some family of second-countable groups. \end{proposition}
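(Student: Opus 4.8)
The plan is to obtain the result as a direct application of Bel'nov's theorem, so the whole proof reduces to exhibiting, inside $\VV(X)$ (and inside $L(X)$), a Lindel\"of subspace that generates the space as an abelian topological group under addition. Both $\VV(X)$ and $L(X)$ are topological groups for their additive structure, so once such a generating subspace is produced, Bel'nov's result applies verbatim and yields the embedding into a product of second-countable groups.

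First I would consider the scalar multiplication map $\sigma\colon [-1,1]\times X \to \VV(X)$, $\sigma(\lambda,x):=\lambda x$. Since $\VV(X)$ is a topological vector space, $\sigma$ is continuous, and I set $S:=\sigma\big([-1,1]\times X\big)=\{\lambda x:\lambda\in[-1,1],\,x\in X\}$. As $[-1,1]$ is compact and $X$ is Lindel\"of, the product $[-1,1]\times X$ is Lindel\"of by Corollary 3.8.10 of \cite{Eng}, and hence its continuous image $S$ is Lindel\"of.

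Next I would verify that $S$ algebraically generates $\VV(X)$ as an abelian group. By Theorem \ref{t:Free-exists}(ii) we have $\SP(X)=\VV(X)$, so every element of $\VV(X)$ is a finite sum $\lambda_1 x_1+\cdots+\lambda_n x_n$ with $\lambda_i\in\IR$ and $x_i\in X$; it therefore suffices to show each single term $\lambda x$ lies in the subgroup generated by $S$. When $\abs{\lambda}\le 1$ this is immediate, and otherwise I pick $k\in\NN$ with $\abs{\lambda/k}\le 1$, so that $(\lambda/k)x\in S$ and $\lambda x=(\lambda/k)x+\cdots+(\lambda/k)x$ ($k$ summands) belongs to the group generated by $S$. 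Thus $S$ generates $\VV(X)$, and Bel'nov's theorem gives the conclusion for $\VV(X)$. The argument for $L(X)$ is identical: $L(X)$ shares the underlying vector space $\VV_X$ with $\SP(X)=L(X)$, its scalar multiplication is again continuous, and the same map $\sigma$ delivers a Lindel\"of generating set.

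I do not anticipate a genuine obstacle, since the statement is essentially a corollary of Bel'nov's theorem. The only two points requiring care are the verification that the ``compact-dilate'' image $S$ generates the full additive group (the coefficients of absolute value exceeding $1$ being handled by the subdivision $\lambda x = k\cdot(\lambda/k)x$), and the harmless observation that $\VV(X)$ and $L(X)$ are being viewed merely as topological groups when Bel'nov's result is invoked.
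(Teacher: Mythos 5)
Your proof is correct and follows exactly the paper's own route: the authors likewise apply Bel'nov's theorem to the Lindel\"of set $\sigma\big([-1,1]\times X\big)$, which algebraically generates $\VV(X)$ and $L(X)$, using that a product of a compact space and a Lindel\"of space is Lindel\"of. The only difference is that you spell out the generation step (the subdivision $\lambda x = k\cdot(\lambda/k)x$), which the paper leaves implicit.
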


Recall that a subset $A$ of a topological space $X$ is called {\em functionally bounded} if every continuous real-valued function $f\in C(X)$ is bounded on $A$. We shall use the following result, see Lemma  10.11.3 in \cite{Banakh-Survey}.

\begin{fact}[\cite{Banakh-Survey}] \label{p:Free-Banakh}
Let $X$ be a Tychonoff space and $A$ be a functionally bounded subset of $L(X)$. Then the set $\bigcup_{v\in A} \supp(v)$ is functionally bounded in $X$ and there is an $n\in\NN$ such that $A\subseteq \SP_n(X)$.
\end{fact}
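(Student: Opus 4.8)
The plan is to prove both assertions by contraposition: if a conclusion fails, I manufacture a \emph{continuous} real-valued function on $L(X)$ that is unbounded on $A$, contradicting functional boundedness. The decisive point, which shapes the whole approach, is that \emph{functional} boundedness is strictly stronger than topological boundedness and must be exploited in full — the functions I build are nonlinear, since continuous linear functionals and continuous seminorms do not suffice. Indeed, writing $x^{(k)}_i:=i/k\in\II$, the elements $u_k:=\sum_{i=1}^{k}\tfrac1k\,x^{(k)}_i$ form a topologically bounded subset of $L(\II)$ (the Kantorovich--Rubinstein seminorms of continuous pseudometrics, which generate the topology of $L(\II)$, are bounded on $\{u_k\}$) that lies in no $\SP_n(\II)$, because $u_k$ has $k$ distinct support points. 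Thus the conclusion genuinely fails for merely bounded sets.

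For the support assertion, suppose some $f\in C(X)$, which may be taken nonnegative, is unbounded on $B:=\bigcup_{v\in A}\supp(v)$. I choose $v_n\in A$ and support points $z_n\in\supp(v_n)$ with $f(z_n)\to\infty$ and $f(z_{n+1})>f(z_n)+1$; continuity of $f$ then forces $\{z_n\}$ to be closed and discrete. The open shells $W_n:=f^{-1}\big(\,(f(z_n)-\tfrac13,\ f(z_n)+\tfrac13)\,\big)$ are pairwise disjoint and locally finite. Since $X$ is Tychonoff, for each $n$ I separate $z_n$ from the disjoint closed set $(X\setminus W_n)\cup\big(\supp(v_n)\setminus\{z_n\}\big)$, getting $h_n\colon X\to[0,1]$ continuous with $h_n(z_n)=1$, supported in $W_n$, vanishing at every other support point of $v_n$. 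Let $\lambda_n\neq0$ be the coefficient of $z_n$ in $v_n$, set $w_n:=n/|\lambda_n|$, and define $G\colon X\to\ell^1(\NN)$ by $G(x):=\big(w_n h_n(x)\big)_{n}$. Disjointness and local finiteness of the supports make $G$ continuous (locally it has at most one nonzero coordinate), so by the universal property of $L(X)$ it extends to a continuous linear $\bar G\colon L(X)\to\ell^1(\NN)$ and $F:=\|\cdot\|_{1}\circ\bar G$ is continuous on $L(X)$. The $n$-th coordinate of $\bar G(v_n)$ is $w_n\sum_x\lambda^{(n)}_x h_n(x)=w_n\lambda_n$, whence $F(v_n)\ge|w_n\lambda_n|=n\to\infty$, contradicting functional boundedness of $A$. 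Hence $B$ is functionally bounded; the Tychonoff separation is exactly what kills the cancellation that would otherwise swamp the large coordinate.

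For the containment $A\subseteq\SP_n(X)$ I first reduce to a compact base space. The continuous monomorphism $\bar\beta\colon L(X)\to L(\beta X)$ sends $A$ to a functionally bounded subset of $L(\beta X)$, $\beta X$ is compact, and $\bar\beta^{-1}\SP_n(\beta X)=\SP_n(X)$, so it suffices to prove the statement when $X$ is compact. Assume then $A\not\subseteq\SP_n(X)$ for every $n$ and pick $v_n\in A\setminus\SP_n(X)$. Since the $\SP_m(X)$ are compact and increasing, $\{v_n\}$ meets each $\SP_m(X)$ in a finite set, so $\{v_n\}$ is closed and discrete in the $k_\omega$-space $\VV(X)=\bigcup_m\SP_m(X)$ of Theorem~\ref{t:Free-k-space}; as $k_\omega$-spaces are normal, there is a $\pmb{\mu}_X$-continuous $G\colon\VV(X)\to\RR$ with $G(v_n)=n$. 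Moreover $\pmb{\nu}_X$ and $\pmb{\mu}_X$ agree on every $\SP_m(X)$, because a continuous bijection of a compact space onto a Hausdorff space is a homeomorphism. The remaining task is to replace this $\pmb{\mu}_X$-continuous witness by a $\pmb{\nu}_X$-continuous (that is, locally convex) function still unbounded on $\{v_n\}$.

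That replacement is the main obstacle, and it is where non-local-convexity bites: $L(X)$ is not a $k$-space (Fact~\ref{f:Free-L}), so a closed discrete set need not be $C$-embedded and one cannot invoke normality as for $\VV(X)$. The phenomenon to capture is that such a sequence can converge, in the completion of $L(X)$, to a point $\xi$ outside $L(X)$ (the example $\{u_k\}$ above already illustrates this), and the function needed is a locally convex analogue of $v\mapsto1/\operatorname{dist}(v,\xi)$ that blows up as $v$ approaches the missing limit. I would build it by a multiscale partition-of-unity argument on the compact base: using compactness together with the Tychonoff separation of the previous step at finer and finer scales, I assemble a convex continuous gauge whose value along $\{v_n\}$ grows with the number of support points, and, by a parallel estimate, with the sizes of the coefficients. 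Bounding both lengths and coefficients this way yields $A\subseteq\SP_n(X)$ in the compact case, and pulling back through $\bar\beta$ gives $A\subseteq\SP_n(X)$ in general.
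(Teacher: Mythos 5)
The paper itself gives no proof of this statement --- it is quoted as a Fact from Lemma 10.11.3 of \cite{Banakh-Survey} --- so your attempt must stand on its own. Its first half does: your proof that $B=\bigcup_{v\in A}\supp(v)$ is functionally bounded is complete and correct. The shells $W_n$ are indeed pairwise disjoint and locally finite (because $f(z_n)\to\infty$ with gaps), the Tychonoff separation gives the $h_n$, the map $G(x)=(w_nh_n(x))_n$ into $\ell^1$ is continuous since near any point it has only finitely many nonzero coordinates, and $F=\|\bar G(\cdot)\|_1$ is a $\pmb{\nu}_X$-continuous function with $F(v_n)\ge n$, contradicting functional boundedness. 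The reduction of the second assertion to compact $X$ is also sound: $\bar\beta$ maps functionally bounded sets to functionally bounded sets, and $\bar\beta^{-1}\big(\SP_n(\beta X)\big)\cap L(X)=\SP_n(X)$ because points of $\beta X\setminus X$ occurring in a length-$n$ representation of $\bar\beta(v)$ must cancel among themselves.

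The second half, however, has a genuine gap, and it sits exactly at the hard core of the statement. Having chosen $v_n\in A\setminus\SP_n(X)$, what you need is a \emph{$\pmb{\nu}_X$-continuous} real function unbounded on $\{v_n\}$. The detour through $\VV(X)$ produces only a $\pmb{\mu}_X$-continuous witness; since $\pmb{\mu}_X$ is finer than $\pmb{\nu}_X$, functional boundedness in $L(X)$ says nothing about such a function, so (as you yourself concede) this step is a dead end rather than a partial result. The promised ``replacement'' --- a ``multiscale partition-of-unity argument'' assembling a ``convex continuous gauge'' whose value grows with the number of support points and with the coefficients --- is a plan, not a proof: no function is defined, no $\pmb{\nu}_X$-continuity is verified, and no lower estimate along $\{v_n\}$ is established. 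Note also that the coefficient-size failure mode is actually the easy one: functional boundedness implies topological boundedness in the lcs $L(X)$, so the norms $\|v_n\|$ are bounded by the Banach--Steinhaus argument of Lemma \ref{l:Free-bounded-L(X)}; the irreducible difficulty is a sequence with bounded norms but $|\supp(v_n)|\to\infty$, supported in a functionally bounded set. Your own (correct) Riemann-sum example $\{u_k\}$ shows how delicate this is: that set is topologically bounded in $L(\II)$, so \emph{no} continuous seminorm, nor any positively homogeneous continuous function, can serve as the witness; the construction must be genuinely non-homogeneous and must exploit functional (not topological) boundedness. That construction is precisely the content of Banakh's lemma, and it is missing from your proposal.
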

An analogous result holds also for $\VV(X)$.

\begin{proposition} \label{p:Free-V(X)-bounded}
Let $X$ be a Tychonoff space and $A$ be a functionally bounded subset of $\VV(X)$. Then the set $\bigcup_{v\in A} \supp(v)$ is functionally bounded in $X$ and there is $n\in\NN$ such that $A\subseteq \SP_n(X)$.
\end{proposition}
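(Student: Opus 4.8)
The plan is to reduce the statement to its locally convex analogue, Fact \ref{p:Free-Banakh}, by comparing the free vector topology $\pmb{\mu}_X$ with the free locally convex topology $\pmb{\nu}_X$ on the common underlying vector space $\VV_X$. The crucial observation is that $L(X)$ is the locally convex modification of $\VV(X)$ by Proposition \ref{p:Free-tvs-lcs}(i), so that $\pmb{\nu}_X \leq \pmb{\mu}_X$ and hence the identity map $id:\VV(X)\to L(X)$ is continuous.

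From this I would deduce that every continuous real-valued function on $L(X)$ is, after composition with $id$, continuous on $\VV(X)$; that is, $C(L(X)) \subseteq C(\VV(X))$. Consequently, if $A$ is functionally bounded in $\VV(X)$ — meaning each $f \in C(\VV(X))$ is bounded on $A$ — then in particular each $g \in C(L(X))$ is bounded on $A$, so $A$ is functionally bounded in $L(X)$ as well. The only place requiring genuine care is getting the direction of this transfer right: one must check that boundedness with respect to the finer topology $\pmb{\mu}_X$ forces boundedness with respect to the coarser topology $\pmb{\nu}_X$, which is exactly the content of the inclusion $C(L(X))\subseteq C(\VV(X))$ coming from $\pmb{\nu}_X \leq \pmb{\mu}_X$.

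Having established that $A$ is functionally bounded in $L(X)$, I would simply invoke Fact \ref{p:Free-Banakh}. Since $\VV(X)$ and $L(X)$ share the same underlying vector space $\VV_X$, the support $\supp(v)$ of each $v\in A$ and the sets $\SP_n(X)$ are computed identically in both spaces. Therefore Fact \ref{p:Free-Banakh} yields directly that $\bigcup_{v\in A}\supp(v)$ is functionally bounded in $X$ and that $A\subseteq \SP_n(X)$ for some $n\in\NN$, which is precisely the assertion. I do not expect any substantive obstacle here, as no new boundedness argument is needed beyond what Fact \ref{p:Free-Banakh} already supplies; the content of the proof lies entirely in the correct exploitation of the comparison $\pmb{\nu}_X \leq \pmb{\mu}_X$.
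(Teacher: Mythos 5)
Your proposal is correct and is essentially the paper's own proof: the paper likewise notes that Proposition \ref{p:Free-tvs-lcs} makes the identity map $id:\VV(X)\to L(X)$ continuous, concludes that $A$ is functionally bounded in $L(X)$, and then applies Fact \ref{p:Free-Banakh}. Your write-up merely makes explicit the transfer of functional boundedness from the finer topology $\pmb{\mu}_X$ to the coarser $\pmb{\nu}_X$, which the paper leaves implicit.
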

\begin{proof}
Proposition \ref{p:Free-tvs-lcs} implies that the identity map $id: \VV(X)\to L(X)$ is continuous. So $A$ is also a functionally bounded subset of $L(X)$ and Fact  \ref{p:Free-Banakh} applies.
\end{proof}
Two Tychonoff spaces $X$ and $Y$ are called {\em $\VV$-equivalent} if the free topological vector spaces $\VV(X)$ and $\VV(Y)$ are isomorphic as topological vector spaces. Analogously, $X$ and $Y$ are said to be {\em $L$-equivalent} if the free locally convex spaces $L(X)$ and $L(Y)$ are isomorphic as topological vector spaces. A topological property $\mathcal{P}$ is called {\em $\VV$-invariant} ({\em $L$-invariant}) if every space $Y$ which is  $\VV$-equivalent (respectively, $L$-equivalent) to a Tychonoff space $X$ with $\mathcal{P}$ also has the property $\mathcal{P}$.

A subspace $Y$ of $\VV(X)$ ($L(X)$) is called a {\em topological basis} of $\VV(X)$ (respectively, $L(X)$) if $Y$ is a vector basis of $\VV(X)$ ($L(X)$) and the maximal vector space topology (maximal locally convex space topology) on the abstract vector space $\VV_X$ which induces on $Y$ its original topology coincides with the topology of $\VV(X)$ (respectively, $L(X)$).
The next theorem has a similar proof to the proof of Theorem 7.10.10 of \cite{ArT}.

\begin{theorem} \label{t:Free-pseudocompactness}
Pseudocompactness is a $\VV$-invariant property and an $L$-invariant property.
\end{theorem}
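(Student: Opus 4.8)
The plan is to characterise pseudocompactness of a Tychonoff space by an intrinsic functional-boundedness property of its free topological vector space, and then transport this property across a $\VV$-equivalence with the help of Proposition~\ref{p:Free-V(X)-bounded}. The basic observation I would start from is that $X$ is pseudocompact exactly when $X$, regarded as the canonical basis sitting inside $\VV(X)$, is a functionally bounded subset of $\VV(X)$: for the implication I actually need, if $X$ is pseudocompact and $g\colon\VV(X)\to\IR$ is continuous, then $g|_X\in C(X)$ is bounded, so $X$ is functionally bounded in $\VV(X)$.

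Next I would fix a topological vector space isomorphism $\Phi\colon\VV(X)\to\VV(Y)$ with $X$ pseudocompact and aim to prove $Y$ pseudocompact. Since homeomorphisms preserve functional boundedness, $\Phi(X)$ is functionally bounded in $\VV(Y)$. Applying Proposition~\ref{p:Free-V(X)-bounded} to $\Phi(X)$ then shows that
\[
S:=\bigcup_{v\in\Phi(X)}\supp(v)
\]
is functionally bounded in $Y$. The crux is to show $S=Y$. Because $X$ is a vector space basis of $\VV(X)$ and $\Phi$ is a linear isomorphism, $\Phi(X)$ spans $\VV(Y)$, i.e. $\SP(\Phi(X))=\VV(Y)$; on the other hand each $v\in\Phi(X)$ is a linear combination of the basis vectors in $\supp(v)\subseteq S$, so $\SP(\Phi(X))$ lies in the subspace generated by $S$. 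Hence $S$ generates all of $\VV(Y)$, and since $Y$ is a basis with $S\subseteq Y$, linear independence forces $S=Y$. Thus $Y=S$ is functionally bounded in $Y$, which is precisely pseudocompactness of $Y$. Running the same argument for $\Phi^{-1}$ yields the converse, giving $\VV$-invariance.

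For $L$-invariance I would repeat the argument verbatim with $L(X)$ replacing $\VV(X)$ and Fact~\ref{p:Free-Banakh} replacing Proposition~\ref{p:Free-V(X)-bounded}; the supporting facts — that $X$ is a spanning basis of $L(X)$, that pseudocompactness makes $X$ functionally bounded in $L(X)$, and that functional boundedness is a topological invariant — all remain valid.

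The step I expect to be the real obstacle is the transport itself: a $\VV$-equivalence need not carry the basis $X$ onto the basis $Y$, so one cannot simply say ``$X$ functionally bounded $\Rightarrow$ $Y$ functionally bounded''. What rescues the argument is the support-spanning computation, and the one point that must be handled with care there is the combinatorial implication $\SP(S)=\SP(Y)\ \Rightarrow\ S=Y$ for $S\subseteq Y$, which relies essentially on $Y$ being a genuine free vector space basis of $\VV(Y)$ (Theorem~\ref{t:Free-exists}(ii)). Everything else reduces to direct invocations of Proposition~\ref{p:Free-V(X)-bounded} (respectively Fact~\ref{p:Free-Banakh}) and the elementary fact that restrictions of continuous functions detect pseudocompactness.
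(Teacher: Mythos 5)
Your proof is correct, and it takes a genuinely different route from the paper's. The paper argues by contradiction, in the style of Theorem 7.10.10 of \cite{ArT}: it regards the pseudocompact space as a topological basis $Y$ of $\VV(X)$, supposes the other space $X$ is not pseudocompact, fixes a discrete family $\{U_n\}$ of nonempty open sets and points $x_n\in U_n$, picks $y_n\in Y$ with $x_n\in\supp(y_n)$, and then constructs by induction a continuous function $f$ on $X$ whose linear extension satisfies $\widetilde{f}(y_n)\geq n$, contradicting pseudocompactness of $Y$. You instead reduce everything to the support lemmas: pseudocompactness of $X$ means exactly that the basis $X$ is functionally bounded in $\VV(X)$; the isomorphism carries this to $\Phi(X)\subseteq\VV(Y)$; Proposition \ref{p:Free-V(X)-bounded} (resp.\ Fact \ref{p:Free-Banakh} for the $L$-case) makes $S=\bigcup_{v\in\Phi(X)}\supp(v)$ functionally bounded in $Y$; and your independence argument correctly forces $S=Y$, since any $y\in Y\setminus S$ would lie in $\SP(S)$, contradicting the linear independence of the basis $Y$ guaranteed by Theorem \ref{t:Free-exists}(ii). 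There is no circularity here: Fact \ref{p:Free-Banakh} is quoted from Banakh's survey and Proposition \ref{p:Free-V(X)-bounded} is deduced from it before this theorem, independently of pseudocompactness. Your route buys brevity and uniformity---the $\VV$- and $L$-cases become literally the same argument---and it is precisely the strategy the paper itself adopts one theorem later, when Theorem \ref{t:Free-pseudocompactness} is combined with Proposition \ref{p:Free-V(X)-bounded} to show that compactness is $\VV$-invariant. What the paper's longer construction buys is self-containedness (it does not rest on Banakh's Lemma 10.11.3, whose proof runs through the embedding of $L(\beta X)$ into $C_k(C_k(\beta X))$ and a Banach--Steinhaus argument) together with an explicit continuous function witnessing the failure of pseudocompactness.
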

\begin{proof}
We prove the theorem only for the case $\VV(X)$. Let $X$ and $Y$ be $\VV$-equivalent  spaces. The theorem is clear if $X$ or $Y$ is finite, so we assume that $X$ and $Y$ are infinite. Assume that $Y$ is pseudocompact. So we can assume that $Y$ is a topological basis of the space $\VV(X)$. Note that $Y$ is a closed subspace of $\VV(X)$ since $Y$ is closed in $\VV(Y)$. Suppose for a contradiction that $X$ is not pseudocompact. Then $X$ contains a discrete family $\UU=\{ U_n : n\geq 0\}$ of non-empty open sets. For every $n\in \NN$, choose a point $x_n\in U_n$.

Define the function $d(x,v): X\times \VV(X) \to \IR$ as follows: if $v=\lambda_1 x_1 +\cdots + \lambda_k x_k \in\VV(X)$, where all $x_i$ are distinct and $\lambda_i \not= 0$ for every $i=1,\dots,k$, then $d(x,v)=\lambda_i$ if $x=x_i$ for some $1\leq i\leq k$, and $d(x,v)=0$ otherwise.

%%%%%%%%%%%%%%%%%%%%%

Since $Y$ is a vector basis of $\VV(X)$, for every $n\in\NN$ there exists $y_n\in Y$ such that $d(x_n,y_n)\not= 0$. So, for every $n\in\NN$, we have
\begin{equation} \label{equ:Free-ps-0}
y_n = \lambda_{0,n} x_n +\sum_{j=1}^{k_n} \lambda_{j,n} t_{j,n}, \mbox{ where } \lambda_{j,n}\not= 0 \mbox{ for every } 0\leq j\leq k_n,
\end{equation}
and all letters $t_{1,n},\dots,t_{k_n,n}\in X\setminus\{ x_n\}$ are distinct (possibly, $k_n =0$). Passing to a subsequence of $\{ y_n: n\in\NN\}$ if it is needed, we can assume that $d(x_j,y_n)= 0$ whenever $n<j$.

By induction on $n\geq 0$, we define continuous real-valued functions $f_n$ on $X$ as follows.  Set $f_0\equiv 0$. Assume that we have defined $f_0,\dots,f_{n-1}$. Put $g_n :=\sum_{i=0}^{n-1} f_i$. Then there exists a continuous real-valued  function $f_n$ on $X$ such that  $f_{n}(X\setminus U_{n})=\{ 0\}$ and $f_{n}(t_{j,i})=0$ at each point $t_{j,i}$ with $i\leq n$ that belongs to $U_n$ and also satisfies
\begin{equation} \label{equ:Free-ps-1}
f_{n} (x_{n})= \frac{n}{\lambda_{0,n}} + \frac{1}{\lambda_{0,n}} \sum_{j\in A_n} \left| \lambda_{j,n} g_n \big(t_{j,n}\big) \right|,
\end{equation}
where $A_n$ is the set of all $1\leq j\leq k_n$ such that $t_{j,n} \in U_0\cup\cdots\cup U_{n-1}$.

Since the family $\UU$ is discrete, the function $f:=\sum_{n\in\NN} f_n$ is continuous on $X$. Moreover,
\begin{equation} \label{equ:Free-ps-3}
f=f_n \mbox{ on } U_n, \mbox{ and } f=g_n  \mbox{ on } U_0\cup\cdots\cup U_{n-1}.
\end{equation}
In addition, the definition of $f$ implies that for all $n\in\NN$ and all $1\leq j\leq k_n$,
\begin{equation} \label{equ:Free-ps-2}
f(t_{j,n})=0 \quad \mbox{ whenever } j\not\in A_n.
\end{equation}
Extend $f$ to a continuous functional $\widetilde{f}: \VV(X)\to\IR$. From (\ref{equ:Free-ps-0})  it follows that
\[
\begin{split}
\widetilde{f}(y_n) & = \lambda_{0,n} f(x_n) + \sum_{j=1}^{k_n} \lambda_{j,n} f(t_{j,n})  \quad \mbox{ (by (\ref{equ:Free-ps-2}))} \\
 & =\lambda_{0,n} f(x_n) + \sum_{j\in A_n} \lambda_{j,n} f(t_{j,n}) \quad \mbox{ (by (\ref{equ:Free-ps-1}) and (\ref{equ:Free-ps-3}))} \\
 & = n  + \sum_{j\in A_n} \left| \lambda_{j,n} g_n \big(t_{j,n}\big) \right| + \sum_{j\in A_n} \lambda_{j,n} g_n \big(t_{j,n}\big) \geq n
\end{split}
\]
for every $n\in\NN$. As $\{ y_n: n\in\NN\} \subset Y$, we conclude that $Y$ is not pseudocompact. This contradiction completes the proof.
\end{proof}

Graev proved in \cite{Gra} that compactness is an $A$-invariant property. Below we prove an analogous result with a similar proof.
\begin{theorem} \label{t:Free-compactness-V-L}
Let $X$ and $Y$ be any $\VV$-equivalent spaces or $L$-equivalent spaces. Then
\begin{enumerate}
\item[{\rm (i)}] if $X$ is compact, then so is $Y$;
\item[{\rm (ii)}]  if $X$ is compact and metrizable, then so is $Y$.
\end{enumerate}
\end{theorem}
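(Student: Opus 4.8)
The plan is to handle both the $\VV$-equivalent and $L$-equivalent cases in parallel, since the only ingredients that differ are Proposition \ref{p:Free-V(X)-bounded} (for $\VV$) versus Fact \ref{p:Free-Banakh} (for $L$). I would first dispose of the trivial case in which $X$ or $Y$ is finite, so assume both are infinite. Fix a topological isomorphism $\Phi\colon\VV(Y)\to\VV(X)$ (respectively $L(Y)\to L(X)$) and set $Y':=\Phi(Y)$. Since $\Phi$ is a homeomorphism and $Y$ is closed in $\VV(Y)$ by Theorem \ref{t:Free-exists}(v) (and $X$ is closed in $L(X)$ in the locally convex case), the image $Y'$ is a closed subset of $\VV(X)$ that is homeomorphic to $Y$. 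Thus it suffices to prove that $Y'$ is compact, respectively compact metrizable.

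For part (i), the key observation is that compactness of $X$ forces $Y'$ into a single ``layer'' $\SP_n(X)$. First, $X$ compact implies $X$ pseudocompact, so by Theorem \ref{t:Free-pseudocompactness} the space $Y$, and hence $Y'$, is pseudocompact. I would then upgrade this to functional boundedness in the ambient space: for any continuous $f\colon\VV(X)\to\IR$ the restriction $f|_{Y'}$ is continuous on the pseudocompact space $Y'$, hence bounded, so $Y'$ is a functionally bounded subset of $\VV(X)$. Now Proposition \ref{p:Free-V(X)-bounded} (or Fact \ref{p:Free-Banakh} in the $L$-case) yields an $n\in\NN$ with $Y'\subseteq\SP_n(X)$. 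Since $X$ is compact, $\SP_n(X)=T_n\big([-n,n]^n\times X^n\big)$ is a continuous image of a compact set and is closed by Theorem \ref{t:Free-exists}(vi), hence compact; and $Y'=Y'\cap\SP_n(X)$ is closed in this compact set, so $Y'$ is compact. Therefore $Y$ is compact.

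For part (ii), I would reuse the containment $Y'\subseteq\SP_n(X)$ produced above and only need to add metrizability. When $X$ is compact metrizable, the cube $[-n,n]^n\times X^n$ is compact metrizable, and its continuous image $\SP_n(X)$ inside the Hausdorff space $\VV(X)$ (resp.\ $L(X)$) is again metrizable, since a Hausdorff continuous image of a compact metrizable space is metrizable. As $Y'$ is a subspace of the metrizable space $\SP_n(X)$, it is metrizable, and it is compact by part (i); hence $Y$ is compact metrizable.

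The main obstacle, and the real content of the argument, is the passage from ``$Y$ pseudocompact'' to ``$Y'$ contained in some $\SP_n(X)$'': this is exactly where the support-boundedness results Proposition \ref{p:Free-V(X)-bounded} and Fact \ref{p:Free-Banakh} do the work, converting an intrinsic covering property of $Y$ into a concrete bound on representation length inside $\VV(X)$. Once $Y'$ is trapped in a single compact (and, for (ii), metrizable) layer $\SP_n(X)$, closedness of $Y'$ immediately promotes it from functionally bounded to compact. The only points requiring care are verifying that $Y'$ is genuinely closed in the ambient free space (so that boundedness upgrades to compactness) and, in the metrizable case, invoking the metrizability of a Hausdorff continuous image of a compact metric space.
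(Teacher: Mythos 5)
Your proof is correct and follows essentially the same route as the paper's: use the invariance of pseudocompactness (Theorem \ref{t:Free-pseudocompactness}) together with the support-boundedness results (Proposition \ref{p:Free-V(X)-bounded}, resp.\ Fact \ref{p:Free-Banakh}) to trap the closed homeomorphic copy of $Y$ inside a single compact layer $\SP_n(X)$, and then, for part (ii), observe that $\SP_n(X)$ is metrizable as a Hausdorff continuous image of the compact metrizable space $[-n,n]^n\times X^n$. The only differences are presentational: you carry the $\VV$- and $L$-cases in parallel and make explicit the step from pseudocompactness of $Y$ to functional boundedness in the ambient space, both of which the paper leaves implicit.
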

\begin{proof}
We prove the theorem only for $\VV$-equivalent spaces.

(i) Since $Y$ is closed in $\VV(Y)$ it is also closed in $\VV(X)$. Theorem \ref{t:Free-pseudocompactness} implies that $Y$ is pseudocompact. So, fo some $n\in\NN$,  $Y$ is a closed subset of the compact set $\SP_n(X)$ by Proposition \ref{p:Free-V(X)-bounded}. Thus $Y$ is compact.

(ii) By the proof of (i), $Y$ is a closed subset of $\SP_n(X)$ for some $n\in\NN$. Since $\SP_n(X)$ is a continuous image of the compact metrizable space $[-n,n]^n \times X^n$ we obtain that $\SP_n(X)$ is a compact metrizable space. Thus $Y$ is  compact and metrizable.
\end{proof}

\begin{proposition} \label{p:Free-normal}
If $X$ is a Tychonoff non-normal space, then $L(X)$ and $\VV(X)$ are also not normal spaces.
\end{proposition}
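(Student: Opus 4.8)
The plan is to reduce everything to the elementary fact that a closed subspace of a normal space is normal; contrapositively, a space containing a non-normal closed subspace cannot itself be normal. Since $i_X$ is a topological embedding into both $\VV(X)$ and $L(X)$, the copy of $X$ living inside each of these spaces is homeomorphic to $X$ and therefore non-normal. Hence it suffices to check that this copy is \emph{closed} in each of $\VV(X)$ and $L(X)$, after which the conclusion is immediate.

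For $\VV(X)$ there is nothing further to do: Theorem \ref{t:Free-exists}(v) states precisely that $X$ is a closed subspace of $\VV(X)$. Were $\VV(X)$ normal, its closed subspace $X$ would be normal, contrary to hypothesis; thus $\VV(X)$ is not normal.

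For $L(X)$ I would repeat, in the locally convex category, the Stone--\v{C}ech argument used to prove Theorem \ref{t:Free-exists}(v). Let $\beta X$ be the Stone--\v{C}ech compactification of $X$ and let $\bar\beta\colon L(X)\to L(\beta X)$ be the continuous injective linear extension of the composite $X\hookrightarrow\beta X\subseteq L(\beta X)$. As $\beta X$ is compact and $L(\beta X)$ is Hausdorff, $\beta X$ is closed in $L(\beta X)$; injectivity of $\bar\beta$ together with the fact that $i_X$ is an embedding yields $X=\bar\beta^{-1}(\beta X)$, so continuity of $\bar\beta$ forces $X$ to be closed in $L(X)$. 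The same closed-subspace argument then shows $L(X)$ is not normal. Alternatively, one may invoke that $X$ is closed in $A(X)$ and that $A(X)$ is closed in $L(X)$, as recorded in the proof of Proposition \ref{p:Free-tvs-lcs}.

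The only step needing any attention---and thus the main, though minor, obstacle---is the closedness of $X$ in $L(X)$, since the excerpt records this explicitly only for $\VV(X)$; once it is in hand, both halves of the statement are one-line consequences of the hereditariness of normality to closed subspaces.
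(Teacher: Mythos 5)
Your proposal is correct and follows essentially the same argument as the paper: $X$ is closed in both $\VV(X)$ and $L(X)$, and normality is inherited by closed subspaces. The only difference is that you explicitly verify the closedness of $X$ in $L(X)$ via the Stone--\v{C}ech extension argument, whereas the paper simply invokes this as a known fact; your verification is sound.
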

\begin{proof}
The proposition follows from the fact that $X$ is a closed subspace of $L(X)$ and $\VV(X)$ and the fact that a closed subspace of a normal space is also normal.
\end{proof}
\begin{remark} {\em
We note that even if $X$ is a normal space, $\VV(X)$ and $L(X)$ need not be normal spaces. This follows from the following three facts: (1) $A(X)$ is a closed subgroup of  $\VV(X)$ and $L(X)$, (2) $A(X)$ contains a closed homeomorphic copy of $X^n$ for every $n\in\NN$, see Corollary 7.1.16 of \cite{ArT}, and (3) the square of a normal space can be not normal, see Example 2.3.12 of \cite{Eng}.}
\end{remark}

For a tvs $E$, we denote by $E'$ the topological dual space of $E$.
\begin{proposition} \label{p:Free-compatible}
For every Tychonoff space $X$, the topologies $\pmb{\nu}_X$ and $\pmb{\mu}_X$ are compatible, i.e. $\VV(X)$ and $L(X)$ have the same continuous functionals.
\end{proposition}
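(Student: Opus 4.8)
The plan is to exploit the fact that $\pmb{\mu}_X$ and $\pmb{\nu}_X$ are two topological vector space topologies on \emph{the same} underlying vector space $\VV_X$, whose dual spaces can both be read off from the universal properties of $\VV(X)$ and $L(X)$. Since a linear functional on $\VV_X$ is completely determined by its values on the basis $X$, the restriction map $g \mapsto g|_X$ embeds the algebraic dual of $\VV_X$ into $\IR^X$, and I intend to show that under this embedding the images of $\VV(X)'$ and of $L(X)'$ coincide, both being exactly $C(X)$.

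First I would record the trivial inclusion. By Proposition \ref{p:Free-tvs-lcs}(i) we have $\mathcal{L}(\VV(X)) = L(X)$, so $\pmb{\nu}_X$ is the locally convex modification of $\pmb{\mu}_X$ and hence $\pmb{\nu}_X \le \pmb{\mu}_X$. Consequently every $\pmb{\nu}_X$-continuous functional is $\pmb{\mu}_X$-continuous, that is, $L(X)' \subseteq \VV(X)'$.

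The substance is the reverse inclusion, and here the two universal properties do all the work. A linear functional $g$ on $\VV_X$ is $\pmb{\mu}_X$-continuous if and only if its restriction $g|_X$ lies in $C(X)$: the ``only if'' direction is immediate because $X$ is a (closed) subspace of $\VV(X)$ by Theorem \ref{t:Free-exists}(v), while the ``if'' direction is exactly Definition \ref{Def:FreeVS} applied to the tvs $E = \IR$, which produces a continuous linear extension $\overline{g|_X}$ that must equal $g$ by uniqueness together with the fact that $X$ generates $\VV_X$. The identical argument, now invoking the universal property of $L(X)$ with the locally convex space $E = \IR$, shows that $g$ is $\pmb{\nu}_X$-continuous if and only if $g|_X \in C(X)$.

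Combining these two characterizations finishes the proof: both $\VV(X)'$ and $L(X)'$ consist precisely of those linear functionals whose restriction to the basis $X$ is continuous, so they are equal as subsets of the algebraic dual of $\VV_X$. The one point that deserves care---and which I regard as the crux rather than an obstacle---is that $\IR$ is simultaneously a topological vector space and a locally convex space, so that both universal properties apply to the \emph{same} target $\IR$ and parametrize the two dual spaces by the identical object $C(X)$; the coincidence of topologies on the one-dimensional target is what forces the duals to agree.
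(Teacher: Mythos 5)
Your proof is correct, but it runs along a genuinely different track from the paper's. The paper's argument never restricts functionals to $X$: it forms the weak topology $\tau$ on $\VV_X$ generated by $\VV(X)'$, observes that $\tau$ is a locally convex vector topology under which the inclusion of $X$ remains continuous, and concludes from the defining maximality of the free locally convex topology that $\tau\leq \pmb{\nu}_X$; hence every $\pmb{\mu}_X$-continuous functional, being $\tau$-continuous, is $\pmb{\nu}_X$-continuous, while the reverse inclusion is immediate from $\pmb{\nu}_X\leq\pmb{\mu}_X$. You instead argue functional-by-functional: applying each universal property to the target $\IR$ (which is simultaneously a tvs and an lcs), you identify both $\VV(X)'$ and $L(X)'$, via the restriction embedding into $\IR^X$, with exactly $C(X)$. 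Your route is more elementary --- it needs no weak topology and no verification that a family of functionals generates a locally convex vector topology --- and it yields a sharper byproduct, namely the explicit identification $\VV(X)'=L(X)'=C(X)$, the classical description of the dual of a free locally convex space. The paper's route is slightly more economical, packaging all functionals into a single topology comparison. One cosmetic remark: your appeal to ``uniqueness'' of the extension $\overline{g|_X}$ is not what forces $\overline{g|_X}=g$; what does the work (as you also note) is that two linear functionals agreeing on the spanning set $X$ must coincide.
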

\begin{proof}
The topology $\tau$ on $\VV_X$ defined by $\VV(X)'$ is locally convex. So, by the definition of $\pmb{\nu}_X$, we have $\tau\leq \pmb{\nu}_X$. Thus any $\chi\in \VV(X)'$ is also continuous in $\pmb{\nu}_X$, and hence $\VV(X)' \subseteq L(X)'$. The converse assertion is trivial.
\end{proof}

Using Fact \ref{f:Free-Completness} we obtain
\begin{proposition}
If $X$ is a Tychonoff space such that $\VV(X)$ is complete, then $X$  is Dieudonn\'{e} complete.
\end{proposition}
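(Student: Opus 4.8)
The plan is to reduce the completeness of the topological vector space $\VV(X)$ to the completeness of the free abelian topological group $A(X)$, and then to invoke Fact \ref{f:Free-Completness}(i). First I would recall from Proposition \ref{p:Free-tvs-lcs}(ii) that the canonical homomorphism $id_{A(X)}:A(X)\to\VV(X)$ is an embedding of topological groups, and from Proposition \ref{p:Free-tvs-lcs}(iii) that its image $id_{A(X)}\big(A(X)\big)$ is closed in $\VV(X)$. Hence $A(X)$ is topologically isomorphic to a closed (additive) subgroup of $\VV(X)$.

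Next I would appeal to the standard fact that a closed subgroup of a Weil-complete topological group is itself complete: any Cauchy filter on the subgroup is also Cauchy in the ambient group, so it converges there by completeness, and its limit lies in the subgroup precisely because the subgroup is closed. Applying this with ambient group $\VV(X)$, which is complete by hypothesis, yields that $A(X)$ is complete.

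Finally, Fact \ref{f:Free-Completness}(i) asserts that $A(X)$ is complete if and only if $X$ is Dieudonn\'e complete; combining this with the previous step gives that $X$ is Dieudonn\'e complete, as desired. I do not expect any genuine difficulty here: the argument is a direct chaining of the embedding result in Proposition \ref{p:Free-tvs-lcs} with Fact \ref{f:Free-Completness}. The only point meriting a word of care is that ``complete'' for the topological vector space $\VV(X)$ is understood as Weil completeness of its underlying additive topological group, which is exactly the notion of completeness entering Fact \ref{f:Free-Completness} and for which the closed-subgroup inheritance argument applies verbatim.
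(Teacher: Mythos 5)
Your proof is correct and follows essentially the same route as the paper: the paper likewise observes that $A(X)$ sits as a closed subgroup of $\VV(X)$ (hence is complete whenever $\VV(X)$ is) and then applies Fact \ref{f:Free-Completness}(i). Your citation of Proposition \ref{p:Free-tvs-lcs}(ii)--(iii) for the closed embedding, together with the explicit Cauchy-filter justification, is if anything a more precise version of the paper's one-line argument.
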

\begin{proof}
Note that $A(X)$  is a closed subspace of $\VV(X)$ by Theorem \ref{t:Free-exists}. So, if $\VV(X)$ is complete, then $A(X)$ is also complete. Thus $X$ is Dieudonn\'{e} complete by Fact \ref{f:Free-Completness}(i).
\end{proof}
We do not know whether the converse is true

\begin{question} \label{q:Free-Comleteness}
Let $X$ be a Dieudonn\'{e} complete  Tychonoff  space. Is $\VV(X)$ complete?
\end{question}
Note that Protasov \cite{Prot} proved the completeness of $\VV(\kappa)$ for any cardinal $\kappa$.

A natural question arises: {\em For which Tychonoff spaces is $X$ the space $\VV(X)$ locally convex}? Note that $\VV(X)$ is locally convex if and only if $\VV(X)=L(X)$. Below we obtain a  complete answer to this question in the case that  $X$ is a $k$-space. We start with a necessary condition for the equality $\pmb{\mu}_X =\pmb{\nu}_X$.

\begin{proposition} \label{p:Free-Compact}
If $\VV(X)$ is locally convex, then $X$ does not contain infinite compact subsets.
\end{proposition}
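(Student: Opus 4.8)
The plan is to argue by contradiction: assume $\VV(X)$ is locally convex yet $X$ contains an infinite compact subset $K$, and derive a clash between local convexity and completeness. The guiding idea is that local convexity passes to vector subspaces, while compactness of $K$ forces the free space $\VV(K)$ to be complete; these two properties cannot coexist for an infinite compact $K$, by the completeness criterion for free locally convex spaces recorded in Fact \ref{f:Free-Completness}(ii).

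First I would note that, since $X$ is Tychonoff and hence Hausdorff, the compact set $K$ is closed in $X$; thus by Lemma \ref{l:Free-subspace} the algebraically generated subspace $\VV(K,X)$ is a closed vector subspace of $\VV(X)$. More importantly, Proposition \ref{p:Free-Subspace} tells us that this subspace carries exactly the free topology, i.e.\ $\VV(K,X)$ is topologically isomorphic to $\VV(K)$. This is the step that lets me reduce the whole question to the compact space $K$.

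Next, under the assumption that $\VV(X)$ is locally convex, every vector subspace of $\VV(X)$ is locally convex as well; in particular $\VV(K,X)$, and therefore the isomorphic copy $\VV(K)$, is locally convex. Recalling the observation (made just before the statement) that $\VV(K)$ is locally convex if and only if $\VV(K)=L(K)$, I obtain $\VV(K)=L(K)$. On the other hand, $K$ is compact and hence trivially a $k_\omega$-space, so Theorem \ref{t:Free-k-space} together with Corollary \ref{c:Free-complete} shows that $\VV(K)$ is complete. Consequently $L(K)$ is complete.

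The contradiction then follows immediately from Fact \ref{f:Free-Completness}(ii): the completeness of $L(K)$ requires that $K$ have no infinite compact subsets, yet $K$ is itself an infinite compact set and so contains one (namely $K$). Hence no infinite compact subset of $X$ can exist. I expect no serious obstacle here; the only points needing a word of care are the transfer of local convexity across the isomorphism of Proposition \ref{p:Free-Subspace} and the routine fact that local convexity is hereditary for subspaces, after which the completeness machinery already developed closes the argument.
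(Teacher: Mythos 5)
Your proof is correct and follows essentially the same route as the paper's own argument: reduce to the compact subset $K$ via Proposition \ref{p:Free-Subspace}, use heredity of local convexity to get $\VV(K)=L(K)$, deduce completeness of $L(K)$ from the $k_\omega$ machinery (Theorem \ref{t:Free-k-space} and Corollary \ref{c:Free-complete}), and contradict Fact \ref{f:Free-Completness}(ii). Your write-up merely spells out details the paper leaves implicit (closedness of $K$, the transfer of local convexity across the isomorphism), so no changes are needed.
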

\begin{proof}%[Proof of Theorem \ref{t:Free-Compact}]
Suppose that $X$ contains an infinite compact subset  $K$. By Proposition \ref{p:Free-Subspace},  $\VV(K)$ is also locally convex. So $\VV(K)=L(K)$ and hence $L(K)$ is complete by Theorem \ref{t:Free-k-space}.  Since $K$ is also Dieudonn\'{e} complete, we obtain a contradiction with Fact \ref{f:Free-Completness}(ii).
\end{proof}

\begin{remark} {\em
It is known that the family $\mathcal{S}$ of all seminorms on $\VV_a(X)$ which are continuous on $X$ defines a free locally convex vector topology $\pmb{\nu}_X$ on $\VV_a(X)$. So the family $\mathcal{S}$ defines the topology  $\pmb{\mu}_X$ of $\VV(X)$ if and only if $\VV(X)=L(X)$. }
\end{remark}

Any topological vector space which is a reflexive abelian group is locally convex, see \cite{Ban}. This remark and Propositions \ref{p:Free-compatible} and \ref{p:Free-Compact} imply
\begin{corollary}
If $\VV(X)$ is a reflexive group, then $\VV(X)=L(X)$. In particular, $X$ does not contain infinite compact sets.
\end{corollary}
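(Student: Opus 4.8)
The plan is to reduce the entire statement to local convexity of $\VV(X)$ and then read off the two preceding propositions. First I would invoke the quoted fact from \cite{Ban}: a topological vector space whose underlying additive group is a reflexive abelian group is necessarily locally convex. Applied to $\VV(X)$, the hypothesis that $\VV(X)$ is a reflexive group immediately forces the free vector topology $\pmb{\mu}_X$ to be locally convex. This is the one genuinely nontrivial input, and it is entirely external to the present development.

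Next I would identify $\VV(X)$ with $L(X)$. Since every locally convex vector topology is in particular a vector topology, the free locally convex topology satisfies $\pmb{\nu}_X \le \pmb{\mu}_X$, while $\pmb{\nu}_X$ is by construction the finest locally convex vector topology on $\VV_X$ inducing the original topology of $X$. As $\pmb{\mu}_X$ is now itself locally convex and induces the original topology on the subspace $X$, maximality of $\pmb{\nu}_X$ forces $\pmb{\mu}_X \le \pmb{\nu}_X$; hence $\pmb{\mu}_X = \pmb{\nu}_X$ and $\VV(X) = L(X)$. Equivalently, a locally convex space equals its own locally convex modification, so $\VV(X) = \mathcal{L}(\VV(X)) = L(X)$ by Proposition \ref{p:Free-tvs-lcs}(i), and Proposition \ref{p:Free-compatible} confirms that passing to the locally convex modification destroys no continuous functionals. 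Finally, the ``in particular'' clause is exactly Proposition \ref{p:Free-Compact}: once $\VV(X)$ is known to be locally convex, that proposition asserts $X$ contains no infinite compact subset, which completes the argument.

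The point deserving a moment's care is the identification $\VV(X)=L(X)$: it must rest on the defining maximality of $\pmb{\mu}_X$ and $\pmb{\nu}_X$ (or, equivalently, on the locally convex modification functor) rather than on the bare observation that the two topologies share a dual, since by Mackey--Arens agreement of duals alone would pin the topology down only up to the range between the weak and the Mackey topology. Beyond this bookkeeping there is no real obstacle internal to the paper, as all the substantive content is carried by the cited result of \cite{Ban}.
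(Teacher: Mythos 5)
Your proposal is correct and takes essentially the same route as the paper: Banaszczyk's theorem from \cite{Ban} gives local convexity of $\VV(X)$, the identification $\VV(X)=L(X)$ then follows from the universal/maximality property of $\pmb{\nu}_X$ (equivalently, $\VV(X)=\mathcal{L}\big(\VV(X)\big)=L(X)$ via Proposition \ref{p:Free-tvs-lcs}(i)), and Proposition \ref{p:Free-Compact} yields the ``in particular'' clause. Your caveat is well placed: the paper's citation of Proposition \ref{p:Free-compatible} (shared duals) would not by itself pin down the topology, and the maximality/locally convex modification argument you give is the one that actually closes that step.
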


We do not know  whether the converse is true:
\begin{question} \label{q:Free-Compact}
If $X$ does not contain infinite compact subsets,  is $\VV(X)$  locally convex?
\end{question}

\begin{theorem} \label{t:Free-lcs}
If $X$ is a $k$-space, then $\VV(X)$ is locally convex if and only if $X$ is a discrete countable space, that is $\VV(X)$ equals $\phi$ or $\IR^n$ for some $n\in\NN$.
\end{theorem}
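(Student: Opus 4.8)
\emph{The plan.} I would prove the two implications separately, disposing of the ``if'' direction at once and spending the effort on showing that local convexity forces $X$ to be discrete and then countable. If $X$ is discrete and countable then either $X$ is finite with $n$ points, so that $\VV(X)=\IR^n$ is finite-dimensional and hence locally convex, or $X=\NN$, so that $\VV(\NN)=\phi=L(\NN)$ is locally convex, as already noted before Theorem~\ref{t:Free-V(N)}. Thus it remains to treat the ``only if'' direction: assuming $X$ is a $k$-space and $\VV(X)=L(X)$ is locally convex, I must show that $X$ is discrete and countable.

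\emph{Discreteness.} First I would apply Proposition~\ref{p:Free-Compact} to conclude that $X$ contains no infinite compact subset, so every compact subset of $X$ is finite. Since $X$ is Hausdorff, finite sets are closed; hence for an arbitrary subset $A\subseteq X$ and any compact $K\subseteq X$ the intersection $A\cap K$ is finite and therefore closed. As $X$ is a $k$-space, a set is closed as soon as its trace on every compact set is closed, so every $A\subseteq X$ is closed. Thus $X$ is discrete. Only this step uses the $k$-space hypothesis.

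\emph{Countability.} It remains to show that a discrete $X$ with $\VV(X)$ locally convex is countable; equivalently, that for uncountable discrete $X$ the topology $\pmb{\mu}_X$ is strictly finer than $\pmb{\nu}_X$, so that $\VV(X)\ne L(X)$. For discrete $X$ the free vector topology $\pmb{\mu}_X$ is the finest vector topology on $\VV_X$ and $\pmb{\nu}_X$ is the finest locally convex topology on $\VV_X$: indeed the finest locally convex topology already renders the Hamel basis $X$ discrete and closed, hence so does the finer finest vector topology, and therefore both ``globally finest'' topologies induce the given discrete topology on $X$ and so coincide with the respective free topologies (in the sense used in Proposition~\ref{p:Free-MMO-1}). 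The crux is then to produce a $\pmb{\mu}_X$-neighborhood of $0$ that is not a $\pmb{\nu}_X$-neighborhood. For $v=\sum_{x\in X}\lambda_x x$ with finite support set $p(v):=\sum_{x\in X}|\lambda_x|^{1/2}$ and $U_n:=\{v:p(v)<2^{-n}\}$. Each $U_n$ is balanced and absorbing because $p(\alpha v)=|\alpha|^{1/2}p(v)$, and subadditivity of $t\mapsto t^{1/2}$ yields $U_{n+1}+U_{n+1}\subseteq U_n$; hence $(U_n)_{n\ge 0}$ is a string, and so defines a vector topology in which $U_0$ is a neighborhood of $0$. As $\pmb{\mu}_X$ is the finest vector topology, $U_0$ is a $\pmb{\mu}_X$-neighborhood of $0$.

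\emph{Finishing and the main obstacle.} Finally I would check that $U_0$ contains no basic convex neighborhood $V_\eps:=\{v:\sum_{x}|\lambda_x|/\eps(x)<1\}$, where $\eps:X\to(0,\infty)$, these $V_\eps$ forming a neighborhood base at $0$ for $\pmb{\nu}_X$. If some $\eps(x)>1$ we are done, since then $t\,x\in V_\eps\setminus U_0$ for $1\le|t|<\eps(x)$; so assume $\eps(x)\le 1$ for all $x$. Since $X$ is uncountable and $X=\bigcup_{k}\{x:\eps(x)\ge 1/k\}$, some level set $L=\{x:\eps(x)\ge\delta\}$ is infinite. Picking a finite $F\subseteq L$ with $|F|>1/\delta$ and a constant $c$ with $1/|F|^2\le c<\big(\sum_{x\in F}1/\eps(x)\big)^{-1}$ (possible since $\sum_{x\in F}1/\eps(x)\le|F|/\delta<|F|^2$), the vector $c\sum_{x\in F}x$ lies in $V_\eps$ but not in $U_0$, so $V_\eps\not\subseteq U_0$. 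Hence $U_0$ is not $\pmb{\nu}_X$-open, $\pmb{\mu}_X\ne\pmb{\nu}_X$, and $\VV(X)\ne L(X)$, contradicting local convexity; therefore $X$ is countable. I expect this last argument to be the real obstacle: separating the finest vector and finest locally convex topologies in uncountable dimension. The square-root gauge $p$ and its string are the device that accomplishes this, and the infinite level set $L$---available precisely because $X$ is uncountable---is exactly what is unavailable in the countable case, consistent with the equality $\VV(\NN)=L(\NN)$.
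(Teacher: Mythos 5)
Your proof is correct, and its skeleton matches the paper's: both use Proposition~\ref{p:Free-Compact} plus the $k$-space hypothesis to force discreteness (compact subsets are finite, so traces of arbitrary sets on compacta are closed, so every set is closed), and both settle the converse via $\IR^n$ and $\phi=\VV(\NN)=L(\NN)$. The genuine difference is in the countability step: the paper simply cites Protasov's theorem that $\VV(D)$ is not locally convex for any uncountable discrete $D$, whereas you re-prove that theorem from scratch. Your device --- the gauge $p(v)=\sum_x|\lambda_x|^{1/2}$, whose sublevel sets form a string (equivalently, $d(u,v)=p(u-v)$ is a translation-invariant metric inducing a vector topology, which lets you skip the string machinery entirely), pitted against the base $V_\eps$ of convex hulls for the finest locally convex topology --- is sound: the level-set pigeonhole over $X=\bigcup_k\{x:\eps(x)\ge 1/k\}$ genuinely needs $X$ uncountable only to guarantee an infinite level set for \emph{every} $\eps$, and your explicit witness $c\sum_{x\in F}x\in V_\eps\setminus U_0$ checks out. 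What the paper's route buys is brevity and a clean separation of concerns; what yours buys is a self-contained argument that exhibits concretely the non-convex $\pmb{\mu}_X$-neighborhood separating the two topologies, and that makes transparent why countability is exactly the dividing line (for countable $X$ one can take $\eps$ decaying so fast that $V_\eps\subseteq U_0$, consistent with $\VV(\NN)=L(\NN)$). One small streamlining: for discrete $X$ the identification of $\pmb{\mu}_X$ with the finest vector topology follows in one line from the universal property, since every map from a discrete space into any tvs is continuous; the detour through Proposition~\ref{p:Free-MMO-1} is unnecessary (and that proposition is stated only for $k_\omega$-spaces, so it is better avoided).
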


\begin{proof}%[Proof of Theorem \ref{t:Free-lcs}]
Let $X$ be a $k$-space such that $\VV(X)$ is locally convex. By Proposition \ref{p:Free-Compact}, $X$ does not contain infinite compact subsets. So being a $k$-space, the space $X$ is discrete. I.~Protasov \cite{Prot} (see also \cite{Gabr}) proved that $\VV(D)$ is not locally convex for every uncountable discrete space $D$. Thus $X$ must be countable. Conversely, if $X$ is a discrete countable space, then $\VV(X)=L(X)$ by Proposition 4.1.4 of \cite{Jar}.
\end{proof}

Proposition \ref{p:Free-Compact} and Theorem \ref{t:Free-lcs} motivate the following question: {\em When does $\VV(X)$ contain an infinite-dimensional locally convex subspaces (for example $L(Y)$ for some infinite $Y$)}?

It is well known that the free groups $F(X)$ and $A(X)$ are  Fr\'{e}chet--Urysohn spaces if and only if $X$ is a discrete space (see \cite{OrdThomas}), and the space $L(X)$ is a $k$-space if and only if $X$ is a discrete countable space by Fact \ref{f:Free-L}. In Theorem \ref{t:Free-Frechet} below we prove an analogous result for $\VV(X)$. We  need the following result.

\begin{fact}[\cite{Gabr}] \label{f:Free-tightness}
If $X$ is an uncountable Tychonoff space, then $\VV(X)$ has uncountable tightness  and is not a $k$-space.
\end{fact}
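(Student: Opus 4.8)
The plan is to exhibit a single set $A\subseteq\VV(X)$ witnessing both conclusions. I work in the essential case of an uncountable \emph{discrete} $X$, where $\VV(X)=\bigoplus_{x\in X}\IR$ carries the free (coproduct) vector topology $\pmb{\mu}_X$; this topology is strictly finer than the box vector topology and than the locally convex topology $\pmb{\nu}_X$ of $L(X)$, and the gap between them is exactly where uncountability will be used. Throughout I use that $X$ is closed and discrete in $\VV(X)$, that each $\SP_n(X)$ is closed (Theorem~\ref{t:Free-exists}(vi)), that every compact subset of $\VV(X)$ lies in some $\SP_n(X)$ (Corollary~\ref{c:Free-compact}), and that for a countable $S\subseteq X$ the subspace $\VV(S,X)$ is a closed copy of $\VV(S)=\phi=\VV(\NN)$ (Proposition~\ref{p:Free-retract}), hence is $k_\omega$ and therefore sequential.

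For $k\in\NN$ and $F\subseteq X$ with $|F|=k$ set $p_{F,k}:=\tfrac{1}{k+1}\sum_{x\in F}x$, and let $A:=\{p_{F,k}:k\in\NN,\ |F|=k\}$; note $0\notin A$. Fix $n$. Every $p_{F,k}\in\SP_n(X)$ has $|F|\le n$ and each nonzero coefficient equal to $\tfrac{1}{k+1}\ge\tfrac{1}{n+1}$, so the box neighbourhood $U_n:=\{v:\ \text{every coefficient of }v\text{ is}<\tfrac{1}{2(n+1)}\text{ in absolute value}\}$, which is a genuine $\pmb{\mu}_X$-neighbourhood of $0$, is disjoint from $A\cap\SP_n(X)$; thus $0\notin\overline{A\cap\SP_n(X)}$. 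The same coordinatewise separation shows that for each fixed $k$ the set $\{p_{F,k}:|F|=k\}$ is closed and discrete: a point with finite support $T$ can be approached only by those $p_{F,k}$ with $F\subseteq T$, of which there are finitely many. Hence $A\cap\SP_n(X)=\bigcup_{k\le n}\{p_{F,k}:|F|=k\}$ is closed and discrete in $\VV(X)$.

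Granting for the moment the key claim that $0\in\overline A$, both conclusions follow. First, $A$ is not closed, yet for every compact $K$ the set $A\cap K=(A\cap\SP_n(X))\cap K$ is a closed discrete subset of a compact set (Corollary~\ref{c:Free-compact}), hence finite and closed; so $\VV(X)$ fails the $k$-space test. Second, if some countable $B\subseteq A$ satisfied $0\in\overline B$, then, putting $S:=\bigcup_{v\in B}\supp(v)$, the set $B$ would lie in the closed subspace $\VV(S,X)\cong\phi$, which is sequential, so some sequence $b_j\in B$ would converge to $0$; then $\{b_j\}\cup\{0\}$ is compact, hence contained in some $\SP_n(X)$ by Corollary~\ref{c:Free-compact}, contradicting $0\notin\overline{A\cap\SP_n(X)}$. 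Thus $0\in\overline A$ is witnessed by no countable subset, and $\VV(X)$ has uncountable tightness.

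The main obstacle is the key claim $0\in\overline A$ in the free topology $\pmb{\mu}_X$. For the coarser box topology it is immediate: given $\sigma\colon X\to(0,\infty)$, the increasing sets $\{x:\sigma(x)>\tfrac{1}{m+1}\}$ have infinite union $X$, so their cardinalities tend to infinity; choosing $m$ with $|\{x:\sigma(x)>\tfrac{1}{m+1}\}|\ge m$ and $F$ of size $m$ inside this set places $p_{F,m}$ in the box $\{v:|v_x|<\sigma(x)\ \forall x\}$. This box argument, however, is already valid for countable $X$, where the statement is \emph{false} (there $\VV(X)=\phi$ is a sequential $k$-space); hence it cannot suffice, and the genuine difficulty is to meet every $\pmb{\mu}_X$-neighbourhood of $0$. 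Such neighbourhoods form a non-convex ``staircase'' base consisting, roughly, of finite sums of boxes of successively finer scales $\sigma_1,\sigma_2,\dots$, and the task is to locate, inside any such set, an element $p_{F,k}$. I expect this to require a pressing-down/pigeonhole argument spreading the finitely many active scales of a given staircase neighbourhood over the uncountably many coordinates of $X$, keeping the pertinent level uncountable; this is precisely the step in which the uncountability of $X$ is indispensable.
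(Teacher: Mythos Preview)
First, note that the paper does not supply a proof of this statement: it is recorded as a \emph{Fact} with a citation to \cite{Gabr}, so there is no in-paper argument to compare your attempt against.

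As for the attempt itself, there is a genuine gap, and it is not merely that the key claim $0\in\overline{A}$ is unproved---it is \emph{false} for your set $A$. Using the explicit neighbourhood base for $\pmb{\mu}_X$ on a discrete $X$ recalled in the proof of Theorem~\ref{t:Free-barrel}(i), take $\sigma_k(x)=3^{-(k+1)}$ for every $x\in X$ and every $k\ge 0$, and let
\[
U=\sum_{k\ge 0}S_k,\qquad S_k=\bigcup_{x\in X}[-3^{-(k+1)},3^{-(k+1)}]\,x.
\]
Every $v\in S_0+\cdots+S_N$ has the form $v=\sum_{j=0}^{N}\mu_j y_j$ with $|\mu_j|\le 3^{-(j+1)}$, hence $\|v\|\le\sum_{j=0}^{N}3^{-(j+1)}<\tfrac12$ (in the $\ell^1$-norm $\|\cdot\|$ used in Lemma~\ref{l:Free-bounded-L(X)}). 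But $\|p_{F,m}\|=\tfrac{m}{m+1}\ge\tfrac12$ for every $m\ge 1$, so $A\cap U=\varnothing$ and $0\notin\overline{A}$. The point is that your $A$ lives inside the unit ball of the continuous $\ell^1$-seminorm, and the ``staircase'' neighbourhoods of $0$ can be chosen with arbitrarily small total $\ell^1$-mass; no pressing-down or pigeonhole argument can repair this, because the obstruction is metric rather than combinatorial, and it is completely insensitive to $|X|$.

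There is also a secondary gap: you reduce to the case of an uncountable \emph{discrete} $X$ without justification. An uncountable Tychonoff space need not contain an uncountable closed discrete subspace (e.g.\ $X=[0,1]$), so one cannot simply invoke Proposition~\ref{p:Free-retract} to embed $\VV(D)$ for an uncountable discrete $D$ into $\VV(X)$. Whatever the argument in \cite{Gabr} does, it must handle general $X$ directly or via a different reduction.
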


%First we prove the next proposition, cf. Theorem 3.9 of \cite{OrdThomas}.
\begin{proposition} \label{p:Free-Ord-Smith}
Let $\VV(X)$ be a sequential space.
 \begin{enumerate}
\item[{\rm (i)}] If $X$ is non-discrete, then $\VV(X)$ has sequential order $\omega_1$.
\item[{\rm (ii)}] If $X$ is discrete, then $X$ is countable; so $\VV(X)$ equals $\phi$ or $\IR^n$ for some $n\in\NN$.
\end{enumerate}
\end{proposition}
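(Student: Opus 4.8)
The plan is to handle the two cases separately, disposing of (ii) quickly via Fact \ref{f:Free-tightness} and reducing (i) to a single key computation: the sequential order of the free topological vector space over a nontrivial convergent sequence. Throughout I use that every sequential space is a $k$-space and that the sequential order of any sequential space is at most $\omega_1$ (the sequential closure operator stabilizes by stage $\omega_1$). For brevity write $\mathrm{so}(\cdot)$ for the sequential order and $[A]_\alpha^Z$ for the $\alpha$-th iterated sequential closure of $A$ in a space $Z$.

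For (ii), if $\VV(X)$ is sequential it is a $k$-space, so Fact \ref{f:Free-tightness} forces $X$ to be countable (an uncountable $X$ makes $\VV(X)$ fail to be a $k$-space). A countable discrete space is either finite, giving $\VV(X)=\IR^n$ with $n=|X|$, or homeomorphic to $\NN$, giving $\VV(X)=\VV(\NN)=\phi$. For (i), since $\VV(X)$ is sequential and $X$ is a closed subspace of $\VV(X)$ by Theorem \ref{t:Free-exists}(v), the space $X$ is itself sequential. As $X$ is non-discrete it has a non-isolated point $p$, so $X\setminus\{p\}$ is not closed; sequentiality then yields a sequence in $X\setminus\{p\}$ converging to $p$. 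Since a subsequence of distinct terms cannot converge to any point other than $p$, the terms take infinitely many distinct values, and passing to a subsequence produces a nontrivial convergent sequence $S\subseteq X$ homeomorphic to the standard one. Because $S$ is compact, Proposition \ref{p:Free-Subspace} gives $\VV(S,X)=\VV(S)$, and Lemma \ref{l:Free-subspace} shows this is a closed vector subspace of $\VV(X)$.

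Granting that $\VV(S)$ has sequential order $\omega_1$, I would finish (i) by a monotonicity observation: if $Y$ is closed in $Z$ and $A\subseteq Y$, then $[A]_1^Z\subseteq\overline{A}^Z\subseteq Y$, so $[A]_1^Y=[A]_1^Z$, and by transfinite induction $[A]_\alpha^Y=[A]_\alpha^Z$ for every ordinal $\alpha$. Hence the stabilization stage for subsets of $Y$ is the same computed in $Y$ or in $Z$, so $\mathrm{so}(Y)\le\mathrm{so}(Z)$. Applying this to the closed subspace $\VV(S)\subseteq\VV(X)$ yields $\mathrm{so}(\VV(X))\ge\mathrm{so}(\VV(S))=\omega_1$, and combined with the universal bound $\mathrm{so}(\VV(X))\le\omega_1$ this gives $\mathrm{so}(\VV(X))=\omega_1$.

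I expect the main obstacle to be establishing $\mathrm{so}(\VV(S))=\omega_1$, and specifically the lower bound $\mathrm{so}(\VV(S))\ge\omega_1$; the upper bound is automatic from sequentiality. For the lower bound I would argue by transfinite recursion, building for each countable ordinal $\alpha$ a subset of $\VV(S)$ whose sequential closure is attained only at stage $\alpha$, using the $k_\omega$-decomposition $\VV(S)=\bigcup_n\SP_n(S)$ together with the combinatorics of finite linear combinations of the basis points $s_n\to s_\infty$; this mirrors the known computation of the sequential order of free (abelian) topological groups over a convergent sequence. A cleaner alternative would be to transfer the group result directly: since $A(X)$ embeds as a closed topological subgroup of $\VV(X)$ with $\tau_{A(X)}=\pmb{\mu}_X|_{A_a(X)}$ by Proposition \ref{p:Free-tvs-lcs}(ii)--(iii), the copy of $A(S)$ sitting inside $\VV(S)\subseteq\VV(X)$ carries its own free group topology, so it suffices to invoke that $A(S)$ over a convergent sequence has sequential order $\omega_1$ and run the same closed-subspace monotonicity argument. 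Either way, the delicate transfinite bookkeeping witnessing all countable ordinals is where the real difficulty lies.
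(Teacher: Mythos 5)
Your proposal is correct, and your ``cleaner alternative'' is exactly the paper's argument: the paper reduces to a nontrivial convergent sequence $\mathfrak{s}\subseteq X$, embeds $\VV(\mathfrak{s})$ as a closed vector subspace of $\VV(X)$ via Proposition \ref{p:Free-Subspace} (with closedness from Lemma \ref{l:Free-subspace}), and then invokes the Ordman--Smith-Thomas theorem that $A(\mathfrak{s})$ has sequential order $\omega_1$, combined with the closed-subspace monotonicity of sequential order and the universal upper bound $\omega_1$, while part (ii) is likewise deduced from Fact \ref{f:Free-tightness}. The direct transfinite construction you sketch first is never carried out and is not needed, so the completed route you give coincides with the paper's proof.
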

\begin{proof}
(i) Since $X$ is a closed subspace of the sequential space $\VV(X)$ by Theorem \ref{t:Free-exists}, $X$ is also sequential. Being non-discrete $X$ contains a non-trivial convergent sequence $\mathfrak{s}$. Proposition \ref{p:Free-Subspace} implies that $\VV(\mathfrak{s})$ is isomorphic to a closed subspace of $\VV(X)$. So $\VV(\mathfrak{s})$ is a sequential space. As $A(\mathfrak{s})$ has sequential order $\omega_1$ by \cite[Theorem 3.9]{OrdThomas} (see also \cite[Theorem 2.3.10]{ZP2}), we obtain that also $\VV(X)$ has sequential order $\omega_1$.

(ii) follows from Fact \ref{f:Free-tightness}.
\end{proof}

It is well known that $\phi$ is a sequential non-Fr\'{e}chet--Urysohn space.
\begin{theorem} \label{t:Free-Frechet}
For a Tychonoff space $X$, the space $\VV(X)$ is Fr\'{e}chet--Urysohn if and only if $X$ is finite. In particular, $\VV(X)$ is metrizable if and only if $X$ is finite.
\end{theorem}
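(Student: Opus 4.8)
The plan is to prove the equivalence by treating the two implications separately, with essentially all of the real content deferred to Proposition \ref{p:Free-Ord-Smith} and the recalled fact that $\phi$ is sequential but not Fr\'{e}chet--Urysohn. The converse implication is immediate and I would dispose of it first: if $X$ is finite, say $|X|=n$, then $\VV(X)=\IR^n$, which is metrizable and hence first-countable, so in particular Fr\'{e}chet--Urysohn.

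For the forward direction I would argue by contradiction, assuming $\VV(X)$ is Fr\'{e}chet--Urysohn and deducing that $X$ must be finite. Since every Fr\'{e}chet--Urysohn space is sequential, the standing hypothesis of Proposition \ref{p:Free-Ord-Smith} is met, and I may split into its two cases. First I would rule out the non-discrete case: if $X$ were non-discrete, then Proposition \ref{p:Free-Ord-Smith}(i) would give that $\VV(X)$ has sequential order $\omega_1$, contradicting the fact that a Fr\'{e}chet--Urysohn space has sequential order $1$. Hence $X$ is discrete.

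With $X$ discrete, Proposition \ref{p:Free-Ord-Smith}(ii) shows that $X$ is countable and that $\VV(X)$ equals either $\phi$ or $\IR^n$ for some $n\in\NN$. Because $\phi$ is sequential but \emph{not} Fr\'{e}chet--Urysohn, the alternative $\VV(X)=\phi$ is incompatible with our assumption; thus $\VV(X)=\IR^n$, which forces $X$ to be finite. This completes the forward direction, and the metrizability statement then follows at once: a metrizable space is first-countable, hence Fr\'{e}chet--Urysohn, so metrizability of $\VV(X)$ yields finiteness of $X$ by the first part, while finiteness of $X$ gives $\VV(X)=\IR^n$, which is metrizable.

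I do not expect any genuine obstacle in this argument, since the work has already been done in Proposition \ref{p:Free-Ord-Smith} (which itself rests on the sequential-order-$\omega_1$ computation for $A(\mathfrak{s})$). The theorem is therefore a short case analysis, and the only points requiring care are the clean invocation of sequentiality as the bridge to Proposition \ref{p:Free-Ord-Smith} and the use of the two standard facts that Fr\'{e}chet--Urysohn spaces have sequential order $1$ and that $\phi$ is not Fr\'{e}chet--Urysohn.
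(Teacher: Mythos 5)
Your proof is correct and follows essentially the same route as the paper: both reduce to Proposition \ref{p:Free-Ord-Smith} to get discreteness (via the sequential order obstruction) and then use that $\phi$ is sequential but not Fr\'{e}chet--Urysohn to exclude the infinite discrete case. The only cosmetic difference is that you invoke Proposition \ref{p:Free-Ord-Smith}(ii) to conclude $\VV(X)\in\{\phi,\IR^n\}$, while the paper exhibits $\phi$ as a direct summand of $\VV(X)$ for infinite discrete $X$; the contradiction obtained is identical.
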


\begin{proof}
Assume that $\VV(X)$ is a Fr\'{e}chet--Urysohn space. Then $X$ is discrete by Proposition \ref{p:Free-Ord-Smith}. If $X$ is infinite, then $\VV(X)$ contains $\phi$ as a direct summand. So $\VV(X)$ is not Fr\'{e}chet--Urysohn, a contradiction. Thus $X$ must be finite.
Conversely, if $X$ is finite, then $\VV(X)=\IR^{|X|}$ is a locally compact metrizable space.
\end{proof}

\begin{remark} {\em
Note that $\VV(X)$ is a locally compact space if and only if $X$ is finite. This follows from the  Principal Structure Theorem for locally compact abelian groups (Theorem 25 of \cite{Morris1}) observing that $\VV(X)$ is a torsion-free divisible abelian group and so $\VV(X)=\IR^n$ for some  $n\in \NN$. }
\end{remark}

Recall (see \cite{Mich}) that a topological space $X$ is called \emph{cosmic}, if $X$ is a regular space with a countable network (a family $\mathcal{N}$ of subsets of $X$ is called a \emph{network} in $X$ if, whenever $x\in U$ with $U$ open in $X$, then $x\in N\subseteq U$ for some $N\in\mathcal{N}$). Michael proved in \cite{Mich} that a regular space $X$ is cosmic if and only if $X$ is a continuous image of a separable metric space.
\begin{proposition} \label{p:Free-cosmic}
Let $X$ be a Tychonoff space. Then $\VV(X)$ is a cosmic space if and only if $X$ is cosmic.
\end{proposition}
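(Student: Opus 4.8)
The plan is to prove both implications from elementary permanence properties of cosmic spaces, namely that the class of cosmic spaces is closed under passing to subspaces, under forming finite products, under taking continuous images lying inside a regular space, and under taking countable unions of subspaces. Each of these follows by manipulating countable networks directly (a network, unlike a base, may consist of arbitrary sets), so I would first record them, or simply cite Michael \cite{Mich}. For the forward implication, suppose $\VV(X)$ is cosmic with countable network $\mathcal N$. Since $X$ is homeomorphic to a closed subspace of $\VV(X)$ by Theorem \ref{t:Free-exists}(v), the family $\{ N\cap X : N\in\mathcal N\}$ is a countable network for $X$; as $X$ is Tychonoff, hence regular, $X$ is cosmic.

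For the reverse implication I would assume $X$ is cosmic and fix a countable network $\mathcal N$ for $X$. First note that $\VV(X)$ is a Hausdorff topological vector space by Theorem \ref{t:Free-exists}(i), hence regular, and so is every subspace of it. Next, for each $n\in\NN$ the product $[-n,n]^n\times X^n$ is cosmic: the interval power $[-n,n]^n$ is separable metrizable, the space $X^n$ carries the countable network consisting of $n$-fold products of members of $\mathcal N$, and a finite product of cosmic spaces is cosmic. The map $T_n\big((a_1,\dots,a_n)\times(x_1,\dots,x_n)\big):=a_1x_1+\cdots+a_nx_n$ from $[-n,n]^n\times X^n$ to $\VV(X)$ is continuous because $\VV(X)$ is a tvs, and its image is exactly $\SP_n(X)$. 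Pushing the countable network of $[-n,n]^n\times X^n$ forward along the continuous surjection onto $\SP_n(X)$ yields a countable network on $\SP_n(X)$, which, being regular, is therefore cosmic for every $n\in\NN$.

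Finally, since $\VV(X)=\SP(X)=\bigcup_{n\in\NN}\SP_n(X)$ by Theorem \ref{t:Free-exists}(ii), I would assemble a network for $\VV(X)$ from the pieces: if $\mathcal N_n$ is a countable network for $\SP_n(X)$, then $\bigcup_{n\in\NN}\mathcal N_n$ is a countable network for $\VV(X)$. Indeed, any point of an open set $U\subseteq\VV(X)$ lies in some $\SP_n(X)$, and a member of $\mathcal N_n$ sits between that point and $U\cap\SP_n(X)\subseteq U$. Together with the regularity of $\VV(X)$ this shows $\VV(X)$ is cosmic. I do not expect a genuine obstacle here; the only point requiring care is to use that a \emph{network} (rather than a base) behaves well under continuous images, finite products, and countable unions, which is precisely what lets the decomposition $\VV(X)=\bigcup_n\SP_n(X)$ into cosmic pieces carry the whole argument.
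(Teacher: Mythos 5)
Your proof is correct, and both directions share the paper's skeleton, but the converse runs on different machinery. The forward implication is identical to the paper's (a subspace of a cosmic space is cosmic). For the converse, you and the paper both exploit the decomposition $\VV(X)=\bigcup_{n\in\NN}\SP_n(X)$ and the continuity of the evaluation maps $T_n$ on products, but the paper concludes via Michael's characterization of cosmic spaces as continuous images of separable metric spaces: it first replaces $X$ by a separable metric space $M$ with a continuous surjection $f\colon M\to X$, forms the separable metric spaces $Y_n:=[-n,n]^n\times M^n$, and then packages everything into a single continuous surjection from the separable metric disjoint sum $Y=\bigsqcup_{n\in\NN}Y_n$ onto $\VV(X)$, so cosmicness follows from one application of Michael's theorem. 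You instead argue directly from the definition: countable networks are pushed through finite products $[-n,n]^n\times X^n$, through the continuous surjections $T_n$ onto $\SP_n(X)$, and finally through the countable union $\bigcup_n\SP_n(X)$, with regularity supplied by the fact that a Hausdorff topological vector space is completely regular. Your route is more elementary and self-contained, needing only the network definition and routine permanence facts rather than Michael's image characterization; the paper's route is shorter on the page because Michael's theorem absorbs the bookkeeping, at the cost of introducing the auxiliary metric space $M$ and the disjoint-sum trick to obtain a single surjection. Both are complete proofs.
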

\begin{proof}
If $\VV(X)$ is cosmic, then $X$  is also cosmic as a subspace of a cosmic space.

Assume that $X$ is a cosmic space. So there is a separable metric space $M$ and a continuous surjective map $f: M\to X$, see \cite{Mich}. For every $n\in\NN$ set
\[
Y_n := [-n,n]^n \times M^n ,
\]
and define the map $T_n :Y_n \to \SP_n(X)$ by
\[
T_n (a_1,\dots,a_n,y_1,\dots,y_n):= a_1 f(y_1)+\cdots +a_n f(y_n), \quad a_i\in[-n,n], y_i\in M, i=1,\dots,n.
\]
Clearly, $Y_n$ is a  separable metric space and $T_n$ is continuous and onto.

Set $Y:= \bigsqcup_{n\in\NN} Y_n$ and define the map $T:Y\to \VV(X)$ by
\[
T(y):= T_n (y), \quad \mbox{ if } \; y\in Y_n.
\]
Clearly, $Y$ is a  separable metric space and $T$ is continuous and onto. Thus $\VV(X)$ is a cosmic space by \cite{Mich}.
\end{proof}

\begin{corollary}
For a Tychonoff space $X$ the following assertions are equivalent:
\begin{enumerate}
\item[{\rm (i)}] $X$ is a cosmic space;
\item[{\rm (ii)}] $A(X)$ is  a cosmic space;
\item[{\rm (iii)}] $L(X)$ is  a cosmic space.
\end{enumerate}
\end{corollary}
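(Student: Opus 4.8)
The plan is to route everything through $\VV(X)$, exploiting Proposition \ref{p:Free-cosmic}, which already identifies being cosmic for $\VV(X)$ with the same property for $X$. The two structural facts I will rely on are that the class of cosmic spaces is closed under passing to subspaces (a countable network restricts to a countable network on a subspace, and regularity is hereditary) and that a \emph{regular} continuous image of a cosmic space is again cosmic: if $f:Z\to Y$ is a continuous surjection and $\mathcal N$ is a network for $Z$, then $\{f(N):N\in\mathcal N\}$ is a network for $Y$, so together with Michael's characterization one gets that a regular $Y$ carrying such a countable network is cosmic. These let me transfer the property along the canonical maps relating $X$, $A(X)$, $L(X)$ and $\VV(X)$.

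First I would prove the implications issuing from (i). Suppose $X$ is cosmic; then $\VV(X)$ is cosmic by Proposition \ref{p:Free-cosmic}. Since $A(X)$ embeds into $\VV(X)$ by Proposition \ref{p:Free-tvs-lcs}(ii), it is cosmic as a subspace of a cosmic space, which gives (ii). For (iii) I would observe that $L(X)$ is the locally convex modification of $\VV(X)$ by Proposition \ref{p:Free-tvs-lcs}(i), so $\pmb{\nu}_X\le\pmb{\mu}_X$ and the identity map $id:\VV(X)\to L(X)$ is a continuous surjection. As $L(X)$ is locally convex, hence regular, and is a continuous image of the cosmic space $\VV(X)$, it too is cosmic; this yields (iii).

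To close the cycle I would use the converse implications, which are immediate: $X$ embeds as a (closed) subspace into each of $A(X)$ and $L(X)$, so if either of these is cosmic then $X$ is cosmic as a subspace, giving (ii)$\Rightarrow$(i) and (iii)$\Rightarrow$(i).

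I do not anticipate a genuine obstacle, since Proposition \ref{p:Free-cosmic} and Michael's theorem carry the load. The one step to state with a little care is (i)$\Rightarrow$(iii): rather than trying to manufacture a separable metric preimage of $L(X)$ directly, one should note that $L(X)$ shares the underlying vector space $\VV_X$ with $\VV(X)$ and carries the coarser topology $\pmb{\nu}_X$, so it is a continuous image of $\VV(X)$, and then invoke regularity of $L(X)$ together with the image-of-cosmic argument. Equivalently, one could simply rerun the continuous surjection $T:Y\to\VV(X)$ from a separable metric space $Y$ constructed in the proof of Proposition \ref{p:Free-cosmic}, with $\VV(X)$ replaced by $L(X)$, since the construction depends only on the algebraic structure and on continuity of the coordinate maps, both of which persist for the coarser topology.
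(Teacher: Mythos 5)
Your proposal is correct and follows essentially the same route as the paper: both hinge on Proposition \ref{p:Free-cosmic} together with the continuity of the identity map $\VV(X)\to L(X)$ (since $\pmb{\nu}_X\le\pmb{\mu}_X$) to get (i)$\Rightarrow$(iii), and on hereditariness of the cosmic property for the remaining implications. The only cosmetic difference is that you prove (i)$\Rightarrow$(ii) via $A(X)\subseteq\VV(X)$ and close two separate loops, whereas the paper runs the single cycle (i)$\Rightarrow$(iii)$\Rightarrow$(ii)$\Rightarrow$(i) using $A(X)\subseteq L(X)$; this changes nothing of substance.
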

\begin{proof}
(i)$\Rightarrow$(iii) follows from  Proposition \ref{p:Free-cosmic} since the topology $\pmb{\nu}_X$ of $L(X)$ is weaker than the topology $\pmb{\mu}_X$ of $\VV(X)$: if $\VV(X)$ is the image of a separable metric space under a continuous map, then so is $L(X)$.

(iii)$\Rightarrow$(ii) and (ii)$\Rightarrow$(i) follow from the facts that $A(X)$ is a subspace of $L(X)$ and $X$ is a subspace of $A(X)$.
\end{proof}

Recall that a space $X$ has {\em countable tightness} if whenever $x\in \overline{A}$ and $A\subseteq X$, then $x\in \overline{B}$ for some countable $B\subseteq A$.
We use the following remarkable result of Arhangel'skii, Okunev and Pestov which shows that the topology of $A(X)$ is rather complicated and unpleasant even for the simplest case of a metrizable space $X$. Denote by $X'$ the set  of all non-isolated points in a space $X$.
\begin{fact}[\cite{AOP}] \label{tAOP}
Let $X$ be a metrizable space. Then
\begin{enumerate}
\item[{\rm (i)}] the tightness of $A(X)$ is countable if and only if the set $X'$ is separable;
\item[{\rm (ii)}] $A(X)$ is a $k$-space if and only if $X$ is locally compact and the set $X'$  is separable.
\end{enumerate}
\end{fact}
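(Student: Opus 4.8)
The statement is quoted from \cite{AOP}, so my aim is only to outline the scheme by which such a characterization is established. The plan is to work with the standard filtration $A(X)=\bigcup_{n\in\NN}A_n(X)$, where $A_n(X)$ is the set of reduced words of length at most $n$, together with the continuous surjections $i_n\colon(X\oplus(-X)\oplus\{0\})^n\to A_n(X)$ given by multiplication. Since $X$ is metrizable, each $(X\oplus(-X)\oplus\{0\})^n$ is metrizable and hence first countable, so every $A_n(X)$ already has countable tightness and is nicely describable; the whole difficulty lies in how these levels are glued and in isolating the role of the set $X'$ of non-isolated points. A first reduction I would make is to split off the isolated points: they generate a free discrete direct summand that affects neither tightness nor the $k$-property, so both statements reduce to understanding the part of $A(X)$ coming from $X'$ and its limit points.

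For part (i), the easy direction assumes $X'$ separable and fixes a countable dense $D\subseteq X'$. Given $A\subseteq A(X)$ and $p\in\overline{A}$, the element $p$ lies in some $A_m(X)$, and I would argue that the supports relevant to approaching $p$ can be pushed, via $D$, into a separable part of $X$, so that a countable subfamily of $A$ already accumulates at $p$; concretely one tracks, level by level, which coordinates of $i_m$ are forced to converge and uses separability of $X'$ to make countably many choices suffice. For the converse I would use that a non-separable metric space $X'$ contains, for some $\eps>0$, an uncountable $\eps$-separated family $\{x_\alpha:\alpha<\omega_1\}$, which is closed and discrete in $X$, and since each $x_\alpha\in X'$ I pick $y_{\alpha,n}\to x_\alpha$ with $d(y_{\alpha,n},x_\alpha)<\eps/3$. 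The goal is then to manufacture a set $A$ and a point $p\in\overline{A}$ with $p\notin\overline{B}$ for every countable $B\subseteq A$; the construction must be genuinely diagonal across the $\alpha$'s and use words of unbounded length, so that any countable $B$ omits cofinally many indices and therefore fails to accumulate at $p$.

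For part (ii), the implication from local compactness and separability of $X'$ I would prove by the decomposition above: the locally compact, separable behaviour near $X'$ makes that part a $k_\w$-space, its free group is $k_\w$ and hence a $k$-space, and the discrete summand contributes a $k$-space factor, so the coproduct is a $k$-space. The reverse implication has two halves. If $X$ is not locally compact, I would exhibit a sequence witnessing non-local-compactness and lift it to a closed subset of $A(X)$ whose trace on the compact sets is closed while the set itself is not, contradicting the $k$-property already inside $A_1(X)$ or $A_2(X)$. If $X'$ is not separable, I would reuse the $\eps$-separated family from part (i) to build a closed non-$k$ subspace of $A(X)$ directly.

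The main obstacle, in both parts, is the two ``only if'' constructions: producing explicit witnesses of uncountable tightness and of failure of the $k$-property when $X'$ is non-separable or $X$ is not locally compact. These are hard precisely because the closure and compactness operators in $A(X)$ do not localize to a single level $A_n(X)$ in any automatic way, so the diagonal set must be engineered to interact correctly with the entire filtration $\{A_n(X)\}_{n\in\NN}$ at once.
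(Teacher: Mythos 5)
This statement is not proved in the paper at all: it is imported as a Fact, with a citation to \cite{AOP}, and is then used as a black box in the proofs of Theorem \ref{t:Free-tight} and Theorem \ref{t:Free-k-space-metr}. So there is no proof of the authors' to compare against; the appropriate treatment here is citation, and your honest framing ("my aim is only to outline the scheme") already concedes that what you wrote is a plan rather than a proof. Judged as a proof attempt, it has a genuine gap: the two ``only if'' directions --- producing a witness of uncountable tightness when $X'$ is non-separable, and a witness of the failure of the $k$-property when $X$ is not locally compact or $X'$ is non-separable --- are exactly the mathematical content of the theorem, and you explicitly defer them as ``the main obstacle'' without supplying the diagonal constructions. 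Nothing in the proposal would let a reader reconstruct them.

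Moreover, your ``first reduction'' is false as stated. The set of isolated points of a metrizable space is open but in general not closed (take $X=\{1/n: n\in\NN\}\cup\{0\}$), so $X$ does not decompose as a topological disjoint sum of $X'$ and its complement, and the isolated points do not ``generate a free discrete direct summand'': in $A(\{1/n\}\cup\{0\})$ the subgroup generated by the points $1/n$ is not closed (its closure meets the generator $0$), and a discrete subgroup of a Hausdorff topological group is necessarily closed, so this subgroup is not even discrete. This is precisely why the Fact is stated in terms of $X'$ rather than $X$, and why the present paper, when it uses the Fact for $\VV(X)$, does not split off all isolated points but instead extracts a \emph{uniformly} isolated subset $D_0$ (all of whose points are $c$-isolated for a single $c>0$), which \emph{is} clopen, and only then applies Corollary \ref{c:Free-retract} to get $\VV(X)=\VV(X_0)\oplus\VV(D_0)$. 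Your subsequent arguments for both (i) and (ii) lean on this invalid decomposition (e.g.\ ``the discrete summand contributes a $k$-space factor''), so even the ``easy'' directions are not established by the proposal as written.
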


For a metric space $X$, the space $L(X)$ has countable tightness if and only if $X$ is separable, see \cite{Gab-MSJ}. The same holds also for $\VV(X)$. %In the next two theorems we shall use the following result, see Theorem 2.1 of \cite{Gabr}: if $D$ is uncountable discrete space, then $\VV(D)$ has uncountable tightness and is not a $k$-space.

\begin{theorem} \label{t:Free-tight}
Let $X$ be a metrizable space. Then $\VV(X)$ has countable tightness if and only if $X$ is separable.
\end{theorem}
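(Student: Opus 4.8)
The plan is to prove the theorem by leveraging the close relationship between $\VV(X)$ and $A(X)$ established in Proposition~\ref{p:Free-tvs-lcs}, together with the characterization of countable tightness for $A(X)$ in the metrizable case given by Fact~\ref{tAOP}(i). For a metrizable space $X$, separability of $X$ is equivalent to separability of the whole space, and in particular separability of $X$ forces the set $X'$ of non-isolated points to be separable, so the two characterizations (for $\VV(X)$ via $X$ separable, and for $A(X)$ via $X'$ separable) must be reconciled. I would therefore handle the two implications somewhat differently, and the overall strategy is to squeeze $\VV(X)$ between $A(X)$ and $L(X)$.

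First I would prove the easier direction: if $\VV(X)$ has countable tightness, then $X$ is separable. Since $X$ is a closed subspace of $\VV(X)$ by Theorem~\ref{t:Free-exists}(v), and countable tightness is inherited by subspaces, $X$ itself has countable tightness. For a metrizable space this is not yet enough to conclude separability directly, so instead I would use Proposition~\ref{p:Free-tvs-lcs}(ii): the group $A(X)$ embeds as a topological subgroup of $\VV(X)$, hence $A(X)$ also has countable tightness. By Fact~\ref{tAOP}(i) the set $X'$ is then separable. The remaining task is to upgrade separability of $X'$ to separability of all of $X$; since $X\setminus X'$ consists of isolated points, separability can fail only if there are uncountably many isolated points. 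I would rule this out by observing that an uncountable discrete subspace of isolated points would, via Proposition~\ref{p:Free-retract} and the result of Protasov cited in Theorem~\ref{t:Free-lcs}, force behaviour incompatible with countable tightness; more robustly, I expect the cleanest route is to invoke the known metric analogue for $L(X)$ (countable tightness of $L(X)$ iff $X$ separable, cited from \cite{Gab-MSJ}) applied to the locally convex modification $\mathcal{L}(\VV(X))=L(X)$ from Proposition~\ref{p:Free-tvs-lcs}(i), noting that the identity map $\VV(X)\to L(X)$ is continuous and does not increase tightness.

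For the converse, assume $X$ is separable and metrizable. I would argue that $\VV(X)$ is then a continuous image, or at least closely controlled, by countably many well-behaved pieces. Specifically, separable metrizability gives a separable metric space, and the construction in the proof of Proposition~\ref{p:Free-cosmic} shows that $\VV(X)$ is cosmic, being a continuous image of the separable metric space $Y=\bigsqcup_{n\in\NN}\left([-n,n]^n\times M^n\right)$. Since every cosmic space has a countable network and hence countable tightness, this immediately yields that $\VV(X)$ has countable tightness. This direction is therefore essentially a corollary of Proposition~\ref{p:Free-cosmic}, and I would state it that way to keep the argument short.

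The main obstacle I anticipate is the closing step of the forward direction, namely converting separability of $X'$ into separability of $X$. The subtlety is that Fact~\ref{tAOP}(i) only controls the non-isolated points, whereas the theorem asserts separability of the entire metrizable space $X$; bridging this gap requires showing that countable tightness of $\VV(X)$ is genuinely stronger than countable tightness of $A(X)$ when there are uncountably many isolated points. I expect the resolution to come from the fact that an uncountable set of isolated points yields a clopen uncountable discrete subspace $D$, and by Proposition~\ref{p:Free-retract} $\VV(D)$ embeds as a closed subspace of $\VV(X)$; then Fact~\ref{f:Free-tightness} says $\VV(D)$ has uncountable tightness, contradicting the inheritance of countable tightness to closed subspaces. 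This last argument is the crux and must be carried out carefully, since it is precisely where the vector-space structure (as opposed to the mere group structure) is used.
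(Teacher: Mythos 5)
Your main line of argument is essentially the paper's proof. For the forward direction you embed $A(X)$ into $\VV(X)$ (Proposition \ref{p:Free-tvs-lcs}), deduce separability of $X'$ from Fact \ref{tAOP}(i), and dispose of an uncountable set of isolated points by producing a clopen uncountable discrete subspace $D_0$, so that $\VV(D_0)$ sits as a closed (indeed complemented) subspace of $\VV(X)$ and Fact \ref{f:Free-tightness} yields a contradiction with the heredity of countable tightness; for the converse you cite Proposition \ref{p:Free-cosmic} to conclude $\VV(X)$ is cosmic, hence has countable tightness. This is exactly what the paper does, and the argument you commit to in your final paragraph is correct.

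Two of the detours you float along the way would fail, however, and should be deleted. First, the route you call ``cleanest'' --- passing to $L(X)=\mathcal{L}\big(\VV(X)\big)$ and citing \cite{Gab-MSJ} --- rests on the claim that the continuous identity $\VV(X)\to L(X)$ ``does not increase tightness''. That is false: countable tightness is not inherited by coarser topologies, since a continuous bijection can strictly increase tightness (for instance, the identity from the discrete topology on $[0,\omega_1]$ onto the order topology), and this is precisely why the paper cannot simply quote the $L(X)$ result. Second, Protasov's theorem cited in Theorem \ref{t:Free-lcs} concerns local convexity of $\VV(D)$ for uncountable discrete $D$, not tightness, so it produces no contradiction with countable tightness; the relevant fact is Fact \ref{f:Free-tightness}, which you do invoke in the end. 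Finally, one detail you flag but do not carry out: the set $D=X\setminus X'$ of isolated points is open but need not be closed (isolated points may accumulate at points of $X'$), so ``uncountably many isolated points yields a clopen uncountable discrete subspace'' requires an argument. The paper supplies it using metrizability: if $D$ is uncountable, then for some $c>0$ the set $D_0=\{d\in D: B_c(d)=\{d\}\}$ is uncountable, and such a uniformly $c$-separated set is clopen in $X$, whence Corollary \ref{c:Free-retract} gives $\VV(X)=\VV(X_0)\oplus\VV(D_0)$.
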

\begin{proof}
Assume that  $\VV(X)$ has countable tightness. Then $A(X)$ has countable tightness as a subspace of $\VV(X)$, see Proposition \ref{p:Free-tvs-lcs}. So $X'$ is separable by Fact \ref{tAOP}. To prove that $X$ is separable we have to show that the set $D:= X\setminus X'$ is countable.

Suppose for a contradiction that $D$ is uncountable. Then there is a positive number $c$ and an uncountable subset $D_0$ of $D$ such that $B_c(d)=\{ d\}$ for every $d\in D_0$, where $B_c(d)$ is the $c$-ball centered at $d$. It is easy to see that $D_0$ is a clopen  subset of $X$. So $X=X_0 \sqcup D_0$, where $X_0:= X\setminus D_0$. Now Corollary \ref{c:Free-retract} implies that $\VV(X)=\VV(X_0) \oplus \VV(D_0)$. Hence $\VV(D_0)$ also has countable tightness, but this contradicts  Fact \ref{f:Free-tightness}. Thus $D$ is countable, and hence $X$ is separable.

Conversely, if $X$ is separable it is a cosmic space. So $\VV(X)$ is a cosmic space by Proposition \ref{p:Free-cosmic}. Thus $\VV(X)$ has countable tightness, see \cite{Mich}.
\end{proof}

By Fact \ref{f:Free-L},  $L(X)$ is a $k$-space if and only if $X$ is a countable discrete space. For the case $\VV(X)$ the situation is more complicated.
\begin{theorem} \label{t:Free-k-space-metr}
For an infinite metrizable space $X$ the following assertions are equivalent:
\begin{enumerate}
\item[{\rm (i)}] $X$ is a locally compact separable (metric) space;
\item[{\rm (ii)}] $\VV(X)$ is a non-Fr\'{e}chet--Urysohn sequential space;
\item[{\rm (iii)}] $\VV(X)$ is a $k$-space;
\item[{\rm (iv)}] $\VV(X)$ is a $k_\omega$-space.
\end{enumerate}
\end{theorem}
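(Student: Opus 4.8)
The plan is to establish the cycle of implications $(\mathrm{i})\Rightarrow(\mathrm{iv})\Rightarrow(\mathrm{iii})\Rightarrow(\mathrm{ii})\Rightarrow(\mathrm{i})$, which is the most economical arrangement since the hard analytic work sits in the single arrow from local compactness to the $k_\omega$-property, while the remaining implications are either soft or follow from results already available in the excerpt. The engine driving the proof is Theorem \ref{t:Free-k-space}: once I know $X$ is a $k_\omega$-space, I get for free that $\VV(X)$ is a $k_\omega$-space with the explicit decomposition $\VV(X)=\bigcup_{n\in\NN}\SP_n(C_n)$.

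\textbf{Proof outline.} For $(\mathrm{i})\Rightarrow(\mathrm{iv})$, I would first observe that a locally compact separable metric space is $\sigma$-compact, hence a $k_\omega$-space: writing $X=\bigcup_{n\in\NN} C_n$ with each $C_n$ compact and $C_n\subseteq\Int C_{n+1}$, local compactness guarantees that the inductive limit topology coincides with the given topology of $X$, so $X$ is a genuine $k_\omega$-space. Theorem \ref{t:Free-k-space} then yields directly that $\VV(X)$ is a $k_\omega$-space, giving (iv). The implication $(\mathrm{iv})\Rightarrow(\mathrm{iii})$ is immediate since every $k_\omega$-space is a $k$-space by definition. For $(\mathrm{ii})\Rightarrow(\mathrm{i})$ I would argue contrapositively using the two metrizable dichotomies already assembled: if $X$ is a sequential space then $X$ is Fr\'echet--Urysohn (being metrizable), so $X$ is separable by Theorem \ref{t:Free-tight} via countable tightness, and then local compactness must be extracted. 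Concretely, if $X$ were not locally compact, I would produce a closed copy inside $\VV(X)$ whose topology fails to be sequential, contradicting (ii); here I expect to invoke Fact \ref{tAOP}(ii) applied to the embedded $A(X)\subseteq\VV(X)$ from Proposition \ref{p:Free-tvs-lcs}, since $A(X)$ being a $k$-space forces $X$ locally compact with $X'$ separable.

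\textbf{The main obstacle} will be the implication $(\mathrm{iii})\Rightarrow(\mathrm{ii})$, i.e.\ upgrading the abstract $k$-space hypothesis to a concrete sequential (and non-Fr\'echet--Urysohn) structure. The plan is to use $(\mathrm{iii})$ to force $X$ into the separable locally compact regime first: since $\VV(X)$ is a $k$-space, so is the closed subgroup $A(X)$, whence Fact \ref{tAOP}(ii) gives that $X$ is locally compact and $X'$ is separable, and combined with separability of $D=X\setminus X'$ (obtained as in the proof of Theorem \ref{t:Free-tight}, since a $k$-space has no unpleasant tightness obstruction of the kind in Fact \ref{f:Free-tightness}) I conclude $X$ is separable. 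Thus (iii) already routes back through (i), and by $(\mathrm{i})\Rightarrow(\mathrm{iv})$ I recover that $\VV(X)$ is a $k_\omega$-space, which is a sequential space. To see it is \emph{non}-Fr\'echet--Urysohn and thereby close the loop at (ii), I would apply Theorem \ref{t:Free-V(N)}: the infinite space $X$ contains a closed copy of $\phi=\VV(\NN)$ as a direct summand, and since $\phi$ is sequential but not Fr\'echet--Urysohn, neither is $\VV(X)$. This is precisely the point where the three prior structural theorems must be stitched together, and verifying that the $k_\omega$-decomposition genuinely yields sequentiality (rather than only the $k$-property) is the delicate step.

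\textbf{A remark on bookkeeping.} Throughout I would lean on the fact that for metrizable $X$ the classes ``sequential'' and ``Fr\'echet--Urysohn'' coincide, so that the hypotheses in (ii) about $\VV(X)$ translate cleanly into properties of the base space $X$ via the closed embedding $X\hookrightarrow\VV(X)$ from Theorem \ref{t:Free-exists}(v). The essential insight is that once $X$ is pinned down as locally compact separable metric, all four conditions collapse onto the single $k_\omega$ picture supplied by Theorem \ref{t:Free-k-space}, with Theorem \ref{t:Free-V(N)} supplying the non-metrizability witness $\phi$ that distinguishes the sequential-but-not-Fr\'echet--Urysohn case from the finite-dimensional collapse in Theorem \ref{t:Free-Frechet}.
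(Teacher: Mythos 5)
Your architecture is essentially the paper's: (i)$\Rightarrow$(iv) via Theorem \ref{t:Free-k-space}; the extraction of local compactness and separability from the $k$-space property via the closed copy of $A(X)$ (Proposition \ref{p:Free-tvs-lcs}), Fact \ref{tAOP}(ii), and the clopen uncountable discrete set $D_0$ argument with Fact \ref{f:Free-tightness}; and the use of $\phi$ to rule out the Fr\'echet--Urysohn property. But there is one genuine gap, and it sits exactly where you flag ``the delicate step'' and then never return to it: you assert that $\VV(X)$, being a $k_\omega$-space, ``is a sequential space.'' That implication is false in general. Every compact Hausdorff space is trivially a $k_\omega$-space, yet $\beta\NN$ is not sequential (it has no nontrivial convergent sequences). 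So deducing (ii) from the $k_\omega$ property requires more than the bare definition, and your proposal contains no argument for sequentiality at all --- only an acknowledgment that one is needed.

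The paper closes this gap by exploiting the specific form of the decomposition together with the metrizability hypothesis on $X$ (this is precisely where that hypothesis earns its keep). Since $\VV(X)$ is $k_\omega$, its closed subspace $X$ is $k_\omega$; choose a $k_\omega$-decomposition $X=\bigcup_{n\in\NN}C_n$, so each $C_n$ is a \emph{metrizable} compactum, and Theorem \ref{t:Free-k-space} gives the decomposition $\VV(X)=\bigcup_{n\in\NN}\SP_n(C_n)$. Each $\SP_n(C_n)$ is a continuous image of the compact metrizable space $[-n,n]^n\times C_n^n$, hence is itself a metrizable compactum. A space that is the inductive limit of an increasing sequence of metrizable compacta is sequential: if $A$ is sequentially closed, then each $A\cap\SP_n(C_n)$ is sequentially closed, hence closed, in the metrizable compactum $\SP_n(C_n)$, and the $k_\omega$ property then makes $A$ closed. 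This is the missing step you need to insert. (A minor slip elsewhere: Theorem \ref{t:Free-V(N)} produces a closed vector subspace of $\VV(X)$ topologically isomorphic to $\phi$, not a direct summand, and it lives in $\VV(X)$, not in $X$; since the Fr\'echet--Urysohn property is hereditary, this still suffices for the non-Fr\'echet--Urysohn half of (ii), which is how the paper argues via Theorem \ref{t:Free-Frechet}.)
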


\begin{proof}
(i)$\Rightarrow$(vi). Since $X$ is locally compact metrizable and separable, $X$ is a $k_\omega$-space. So $\VV(X)$ is  a $k_\omega$-space by Theorem \ref{t:Free-k-space}.

  (iv)$\Rightarrow$(iii) is trivial.

(iii)$\Rightarrow$(i). If $ \VV(X)$ is a $k$-space, then $A(X)$ is a $k$-space as a closed subspace of $\VV(X)$. Then $X$ is locally compact and the set $X'$  is separable by Fact \ref{tAOP}. To prove that $X$ is separable we have to show that the set $D:= X\setminus X'$ is countable. We repeat our argument from the proof of Theorem \ref{t:Free-tight}.

Suppose for a contradiction that $D$ is uncountable. Then there is a positive number $c$ and an uncountable subset $D_0$ of $D$ such that $B_c(d)=\{ d\}$ for every $d\in D_0$, where $B_c(d)$ is the $c$-ball centered at $d$. It is easy to see that $D_0$ is a clopen  subset of $X$. So $X=X_0 \sqcup D_0$, where $X_0:= X\setminus D_0$. Now Corollary \ref{c:Free-retract} implies that $\VV(X)=\VV(X_0) \oplus \VV(D_0)$. Hence $\VV(D_0)$ also  is a $k$-space, but this contradicts Fact \ref{f:Free-tightness}. Thus $D$ is countable, and hence $X$ is separable.

(ii)$\Rightarrow$(iii) is trivial.

Let us prove  (iv)$\Rightarrow$(ii). Since $\VV(X)$ is a $k_\omega$-space, the closed subset $X$ is also a $k_\omega$-space. Let $X=\bigcup_{n\in\NN} C_n $ be a $k_\omega$-decomposition of $X$. Then $\VV(X)=\bigcup_{n\in\NN} \SP_n(C_n)$ is a $k_\omega$-decomposition of $\VV(X)$, see Theorem \ref{t:Free-k-space}. As each $\SP_n(C_n)$ is a metrizable compactum, the space $\VV(X)$ is sequential. Since $X$ is infinite, Theorem \ref{t:Free-Frechet} implies that $\VV(X)$ is a non-Fr\'{e}chet--Urysohn space.
\end{proof}

%%%%%%%%%%%%%%%%%%%%%%%%%%%%%%%%%%%%%%%%
%%%%%%%%%%%%%%%%%%%%%%%%%%%%%%%%%%%%%%%%
%%%%%%%%%%%%%%%%%%%%%%%%%%%%%%%%%%%%%%%%
%%%%%%%%%%%%%%%%%%%%%%%%%%%%%%%%%%%%%%%%
%%%%%%%%%%%%%%%%%%%%%%%%%%%%%%%%%%%%%%%%
%%%%%%%%%%%%%%%%%%%%%%%%%%%%%%%%%%%%%%%%

\section{Vector space properties of free topological vector spaces}

%%%%%%%%%%%%%%%%%%%%%%%%%%%%%%%%%%%%%%%%
%%%%%%%%%%%%%%%%%%%%%%%%%%%%%%%%%%%%%%%%
%%%%%%%%%%%%%%%%%%%%%%%%%%%%%%%%%%%%%%%%
%%%%%%%%%%%%%%%%%%%%%%%%%%%%%%%%%%%%%%%%
%%%%%%%%%%%%%%%%%%%%%%%%%%%%%%%%%%%%%%%%

First we note that any topological vector space is a quotient space of a free topological vector space.
\begin{proposition} \label{p:Free-quot-tvs}
Let $E$ be any topological vector space and $\VV(E)$ the free topological vector space on $E$. Then the canonical continuous linear map of $\VV(E)$ onto $E$ is a quotient map.
\end{proposition}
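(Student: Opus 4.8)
The plan is to exhibit the canonical map explicitly and show it satisfies the definition of a quotient map of topological vector spaces. Let $E$ be a topological vector space. Applying the identity map $\mathrm{id}_E : E \to E$, which is continuous into the tvs $E$, the universal property of $\VV(E)$ (Definition \ref{Def:FreeVS}) produces a unique continuous linear operator $q := \overline{\mathrm{id}_E} : \VV(E) \to E$ with $\mathrm{id}_E = q \circ i_E$. Since $\SP(E) = \VV(E)$ by Theorem \ref{t:Free-exists}(ii) and $q$ is linear and agrees with the identity on the generating set $E = i_E(E)$, the operator $q$ is surjective; indeed $q$ is simply the linear map sending a formal combination $\lambda_1 x_1 + \cdots + \lambda_n x_n$ in $\VV(E)$ to the genuine vector $\lambda_1 x_1 + \cdots + \lambda_n x_n$ computed in $E$.

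It remains to verify that the topology of $E$ coincides with the quotient topology induced by $q$. First I would set $H := \ker(q)$ and let $j : \VV(E) \to \VV(E)/H$ be the quotient map, so that $q$ factors as $q = f \circ j$ for an induced continuous linear bijection $f : \VV(E)/H \to E$. The content of the claim is that $f$ is a topological isomorphism, equivalently that $f^{-1}$ is continuous, equivalently that $q$ is an open map onto $E$. I would argue this along the lines of the proof of Theorem \ref{t:Free-exists}(vii): take an arbitrary tvs $V$ and a continuous linear map $t : E \to V$; then $t \circ i_E : E \to V$ is continuous, so by the universal property it lifts to a continuous $\overline{t \circ i_E} : \VV(E) \to V$, which is constant on cosets of $H$ and hence descends to a continuous linear $T : \VV(E)/H \to V$ with $T \circ j = \overline{t \circ i_E}$. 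Taking $V = E$ and $t = \mathrm{id}_E$ identifies the universal comparison map, and a short diagram chase shows $T = t \circ f$ for every such $t$; choosing $V$ and $t$ appropriately forces $f^{-1}$ to be continuous.

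The cleanest route, which I would actually carry out, is to show directly that $E$ already has the quotient topology. By definition the quotient topology on $\VV(E)/H \cong E$ (transported via $f$) is the finest vector topology making $q$ continuous; call it $\tau_q$, and let $\tau_E$ denote the original topology of $E$. Since $q$ is continuous into $(E,\tau_E)$ we have $\tau_E \leq \tau_q$. For the reverse inequality, observe that $i_E : (E,\tau_E) \to \VV(E)$ is continuous and $q \circ i_E = \mathrm{id}_E$; composing, the identity map $(E,\tau_E) \to (E,\tau_q)$ equals $j \circ i_E$ followed by the transport $f^{-1}$, which is continuous, giving $\tau_q \leq \tau_E$. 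Hence $\tau_q = \tau_E$ and $q$ is a quotient map.

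The main obstacle is a subtle one of definitions rather than of hard analysis: one must be careful that ``quotient map'' in $\mathbf{TVS}$ means the codomain carries the finest \emph{vector} topology making $q$ continuous, not merely the finest topology in the set-theoretic sense, and then check that the original topology $\tau_E$ is already a vector topology of this maximal kind. The delicate point is verifying $\tau_q \leq \tau_E$, i.e.\ that the continuous section-like map $i_E$ genuinely witnesses continuity of the identity in the reverse direction; this is exactly where the universal property is indispensable and where one cannot merely invoke surjectivity and continuity of $q$. Once that inequality is secured, the conclusion is immediate.
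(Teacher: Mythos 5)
Your proposal is correct and is essentially the paper's own argument: the paper extends $\mathrm{id}_E$ to $\Phi:\VV(E)\to E$ and observes that if the quotient vector topology $\tau_1$ were strictly finer than $\tau_E$, then the identity map of $E$ --- which, exactly as you note, factors continuously through $\VV(E)$ as (the transport of) $j\circ i_E$ --- would fail to be continuous, a contradiction. Your direct two-inequality version ($\tau_E\leq\tau_q$ from continuity of $q$, and $\tau_q\leq\tau_E$ from $q\circ i_E=\mathrm{id}_E$) is just a contradiction-free packaging of the same idea, with only a harmless notational slip in writing $f^{-1}$ where the identification $f$ itself is meant.
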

\begin{proof}
Denote by $\tau$ the topology of $E$. The identity map $\phi:E\to E$ can be extended to a continuous linear map $\Phi:\VV(E)\to E$. If $\Phi$ is not a quotient map, then  the quotient vector space topology $\tau_1$ on the underlying vector space $E_a$ of $E$ is strictly  finer than $\tau$. Therefore $\phi$ is not continuous, a contradiction. Thus $\tau_1=\tau$.
\end{proof}

Recall that  a topological vector space $E$ is called {\em barrelled} if every barrel in $E$ is a neighborhood of zero.
Following Saxon \cite{Sa}, a topological vector space $E$ is called {\em Baire-like} if every increasing sequence $\{ A_{n}\}_{n\in\NN}$  of absolutely convex closed subsets covering $E$ contains a member which is a neighborhood of zero. Clearly,   Baire lcs $\Rightarrow$ Baire-like.

\begin{theorem} \label{t:Free-barrel}
Let $X$ be a Tychonoff space. Then
\begin{enumerate}
\item[{\rm (i)}]  $\VV(X)$ is barrelled if and only if $X$ is discrete.
\item[{\rm (ii)}] Let  $X$ be discrete. Then $\VV(X)$ is a Baire-like space if and only if $X$ is finite.
\item[{\rm (iii)}]  Let  $X$ be discrete. Then $\VV(X)$ is a Baire space if and only if $X$ is finite.
\end{enumerate}
\end{theorem}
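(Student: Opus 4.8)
The plan is to reduce all three parts to the locally convex modification $L(X)=\mathcal L(\VV(X))$ (Proposition \ref{p:Free-tvs-lcs}(i)) by means of one elementary lemma: for any topological vector space $(E,\tau)$ and its locally convex modification $(E,\widehat\tau)$, an \emph{absolutely convex} set $B$ is a $\tau$-neighbourhood of $0$ if and only if it is a $\widehat\tau$-neighbourhood of $0$. Indeed $\widehat\tau\le\tau$ gives one implication, and if $B$ is absolutely convex and a $\tau$-neighbourhood then it contains a balanced $\tau$-neighbourhood $U$, whence $\conv(U)\subseteq B$ is a $\widehat\tau$-neighbourhood by the very definition of $\widehat\tau$. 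Since barrels are absolutely convex, this lets me test the neighbourhood property of a barrel in $\pmb{\mu}_X$ or in $\pmb{\nu}_X$ interchangeably; in particular $\VV(X)$ barrelled implies $L(X)$ barrelled. Moreover, because $\pmb{\nu}_X\le\pmb{\mu}_X$ (Proposition \ref{p:Free-compatible}), every $\pmb{\nu}_X$-closed set is $\pmb{\mu}_X$-closed, so the barrel I construct below for the converse will be $\pmb{\nu}_X$-closed and will therefore witness non-barrelledness of $\VV(X)$ and of $L(X)$ at the same time — this is precisely the mechanism behind the advertised applications to $L(X)$.

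For (i), if $X$ is discrete then by Proposition \ref{p:Free-coproduct} and Proposition \ref{p:Free-tvs-lcs}(i) the space $L(X)$ is the locally convex direct sum $\bigoplus_{x\in X}\IR$, i.e. the finest locally convex topology on $\VV_X$, whose neighbourhood base at $0$ is the collection of \emph{all} absolutely convex absorbing sets. Hence every barrel of $\VV(X)$, being absolutely convex and absorbing, is already a $\pmb{\nu}_X$-neighbourhood, and so a $\pmb{\mu}_X$-neighbourhood by the lemma; thus $\VV(X)$ is barrelled. For the converse I prove the contrapositive: assuming $X$ has a non-isolated point $a$, I exhibit a barrel that is not a neighbourhood, built from the common dual $\VV(X)'=L(X)'=C(X)$ (Proposition \ref{p:Free-compatible}). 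For a suitable family $\mathcal F\subseteq C(X)$ set
\[
\widetilde q(v):=\sup_{f\in\mathcal F}\bigl|\,\overline f(v)\,\bigr|,\qquad B:=\{v\in\VV(X):\widetilde q(v)\le 1\}.
\]
As a supremum of $\pmb{\nu}_X$-continuous seminorms, $\widetilde q$ is lower semicontinuous, so $B$ is absolutely convex and $\pmb{\nu}_X$-closed (hence $\pmb{\mu}_X$-closed); if $\mathcal F$ is pointwise bounded on $X$ then $\widetilde q$ is finite, $B$ is absorbing, and $B$ is a genuine barrel.

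The heart of the argument — and the step I expect to be the main obstacle — is to choose $\mathcal F$ so that $\widetilde q$ is \emph{not} continuous. Since $\overline f(x)=f(x)$ on the basis, the restriction $\widetilde q|_X$ equals the envelope $\Psi(x):=\sup_{f\in\mathcal F}|f(x)|$; and continuity of $\widetilde q$ would give $\widetilde q\le C\,p$ for some seminorm $p$ with $p|_X$ continuous, forcing $\Psi\le C\,p|_X$ to be bounded on a neighbourhood of $a$. So it suffices to produce a pointwise bounded family $\mathcal F\subseteq C(X)$ whose finite envelope $\Psi$ is unbounded on every neighbourhood of $a$; equivalently, a finite lower semicontinuous weight $\psi\ge 0$ with $\psi(a)=0$ that is unbounded near $a$, realised by enough small-support continuous bumps dominated by $\psi$. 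This is routine when $a$ is first countable — take disjoint bumps along a sequence clustering at $a$ with heights tending to $\infty$ — but for a non-first-countable point (e.g. $\omega_1$ in $[0,\omega_1]$, which no sequence reaches) it forces an uncountable family of clopen single-point or small-support bumps ranged along cofinally many open sets clustering at $a$; arranging the selection so that $\mathcal F$ stays pointwise bounded while $\Psi$ blows up on every neighbourhood of $a$ is the delicate point, which I would isolate as a purely topological lemma on non-isolated points of Tychonoff spaces.

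Finally, (ii) and (iii) follow quickly under the discreteness hypothesis. If $X$ is finite then $\VV(X)=\IR^{|X|}$ is a Banach space, hence Baire, and a Baire topological vector space is Baire-like (in an increasing closed absolutely convex cover the Baire property yields a non-meagre, hence interior-bearing, member, which — being balanced and convex — is a neighbourhood of $0$). If $X$ is infinite it contains a clopen copy of $\NN$, so Corollary \ref{c:Free-retract} gives $\VV(X)=\phi\oplus\VV(X\setminus\NN)$ with $\phi=\VV(\NN)$ a complemented subspace; and $\phi=\bigcup_n\IR^n$ is the increasing union of the proper closed subspaces $\IR^n$, none a neighbourhood, so $\phi$ is not Baire-like. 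Since Baire-likeness passes to complemented subspaces (pull an increasing closed absolutely convex cover of the summand back through the continuous projection, apply Baire-likeness upstairs, and intersect the resulting neighbourhood with the summand), $\VV(X)$ is not Baire-like, proving (ii); and as Baire implies Baire-like, $\VV(X)$ is not Baire either, proving (iii).
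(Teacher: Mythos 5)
Your reduction lemma, your proof that discreteness implies barrelledness (for discrete $X$ the topology $\pmb{\nu}_X$ is the finest locally convex topology on $\VV_X$, so every absolutely convex absorbing set is already a $\pmb{\nu}_X$-neighbourhood, hence a $\pmb{\mu}_X$-neighbourhood), and your treatment of (ii) and (iii) via the complemented copy of $\phi$ are all correct; in fact your converse of (i) is cleaner than the paper's, which instead invokes Protasov's explicit neighbourhood basis of $\pmb{\mu}_X$ over a discrete space. The problem is the direction barrelled $\Rightarrow$ discrete.

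The gap is exactly where you flagged it, and it is not a repairable technicality: the ``purely topological lemma'' you need is false. Your argument detects that $B=\{v:\widetilde q(v)\le 1\}$ fails to be a neighbourhood of zero \emph{only} through unboundedness of $\Psi=\sup_{f\in\mathcal F}|f|$ on every neighbourhood of $a$. Such a pointwise bounded family exists if and only if there is an increasing sequence $(F_n)$ of closed sets covering $X$ none of which contains a neighbourhood of $a$: one way take $F_n=\{\Psi\le n\}$; conversely, given such a cover, take $\mathcal F=\{f\in C(X):\ |f|\le n+1 \text{ on } F_n \text{ for every } n\}$, which is pointwise bounded and, using Tychonoff bumps $(n+1)g$ with $g=0$ on $F_n$ and $g(x)=1$ at some $x\in V\setminus F_n$, has $\Psi$ unbounded on every neighbourhood $V$ of $a$. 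But consistently there are Tychonoff spaces with a non-isolated point admitting no such cover: let $\kappa$ be a measurable cardinal, $\mathcal U$ a $\sigma$-complete free ultrafilter on $\kappa$, and $X=\kappa\cup\{a\}$ with all points of $\kappa$ isolated and $\{\{a\}\cup A:A\in\mathcal U\}$ the neighbourhoods of $a$. Then $a$ is not isolated, yet for any increasing closed cover $(F_n)$ of $X$, $\sigma$-completeness of $\mathcal U$ forces $F_n\cap\kappa\in\mathcal U$ for some $n$, so some $F_k$ contains the neighbourhood $\{a\}\cup(F_n\cap\kappa)$ of $a$. Hence your lemma is not a theorem of ZFC, and your proof cannot establish Theorem \ref{t:Free-barrel}(i) for all Tychonoff spaces.

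The conceptual miss is that unboundedness of $\Psi$ near $a$ is sufficient but far from necessary for a polar barrel to fail to be a neighbourhood: what is really needed is failure of \emph{equicontinuity} at $a$, and that can occur with uniformly bounded families. This is exactly the mechanism in the paper's proof. Take a net $N=\{x_i\}_{i\in I}$ in $X\setminus\{a\}$ converging to $a$ and consider the family $\mathcal G=\{g\in C(X):\ |g|\le 2 \text{ on } N,\ |g|\le 1 \text{ on } X\setminus N\}$ (the paper phrases this as the absolutely convex hull of $\bigcup_i[-\tfrac12,\tfrac12]x_i\cup\bigcup_{x\notin N}[-1,1]x$; its polar form is $B=\{v:\ |\bar g(v)|\le 1\ \forall g\in\mathcal G\}$). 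Here $\mathcal G$ is uniformly bounded by $2$, so $B$ is a closed absolutely convex absorbing set and no pointwise-boundedness lemma is needed; but $\mathcal G$ is not equicontinuous at $a$, since for each $i$ there is $g\in\mathcal G$ with $g(x_i)=1$, $g(a)=-1$, $|g|\le 1$. Consequently $x_i-a\notin B$ for every $i$, while $x_i-a\to 0$ in $\VV(X)$, so $B$ is not a neighbourhood of zero. Replacing ``heights blowing up'' by this bounded jump across a convergent net is what makes the argument work at an arbitrary non-isolated point.
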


\begin{proof}
(i) Assume that $\VV(X)$ is a barrelled tvs. Suppose for a contradiction that $X$ is not discrete. Let $x_0\in X$ be a non-isolated point and take a net $N=\{ x_i\}_{i\in I}$ in $X$ converging to $x_0$ (we assume that $x_0\not\in N$). Set
\[
A:=\bigcup \left\{ \left[ -\frac{1}{2},\frac{1}{2}\right]x_i : \; i\in I \right\} \cup \bigcup \big\{ [-1,1]x :\: x\in X\setminus N \big\} \subset\VV_X,
\]
and let $B$ be the absolute convex hull of $A$. Then $B$ is a barrel in $\VV(X)$ and $B=\conv(A)$. Note that, for every $i\in I$,
\begin{equation} \label{equ:Free-bar}
\lambda x_i + \mu x_0 \in B \; \mbox{  if and only if } \; 2\lambda,\mu \in [-1,1].
\end{equation}

We show that $B$ is not a neighborhood of zero in $\VV(X)$. Indeed, otherwise we can find a neighborhood $U$ of $x_0$ such that $x-x_0 \in B $ for every $x\in U$. So, for every $j\in I$ such that $x_j\in U$, we obtain $x_j - x_0 \in B$ that contradicts (\ref{equ:Free-bar}). Thus $X$ is discrete.

Conversely, let $X$ be a discrete space.
We shall use the following simple description of the topology $\pmb{\mu}_X$ of $\VV(X)$ given in the proof of Theorem 1 in \cite{Prot}.  For each $x\in X$, choose some $\lambda_x  >0$, and denote by $\mathcal{S}_X$ the family of all subsets of $\VV_X$ of the form
\[
\bigcup \big\{ [-\lambda_x,\lambda_x ] x : x\in X \big\}.
\]
For every sequence $\{ S_k\}_{k\geq 0}$ in $\mathcal{S}_X$, we put
\[
\sum_{k\geq 0} S_k := \bigcup_{k\geq 0} (S_0 + S_1 +\cdots + S_k),
\]
and denote by $\mathcal{N}_X$ the family of all subsets of $\VV_X$ of the form $\sum_{k\geq 0} S_k$. Then $\mathcal{N}_X$ is a basis at zero, $\mathbf{0}$, for $\pmb{\mu}_X$.

Now let $B$ be a barrel in $\VV(X)$. For every $x\in X$ choose $\lambda_x >0$ such that $[-\lambda_x,\lambda_x] x \subseteq B$ and set
\[
B_0 :=\conv\left\{ \bigcup \{ [-\lambda_x,\lambda_x] x : x\in X \} \right\}.
\]
Then $B_0$ is a barrel in $\VV(X)$ and $B_0 \subseteq B$. For every integer $k\geq 0$,  set
\[
S_k := \bigcup \left\{ \left[ -\frac{\lambda_x}{2^{k+1}},\frac{\lambda_x}{2^{k+1}}\right]x : \; x\in X \right\}.
\]
Then the neighborhood $\sum_{k\geq 0} S_k$ of zero in $\VV(X)$ is a subset of $B_0$. Therefore $B$ is also a neighborhood of zero in $\VV(X)$. Thus $\VV(X)$ is a barrelled space.

(ii) If $X$ is infinite and $S=\{ x_n\}_{n\in\NN}$ is a sequence in $X$ consisting of distinct elements, then
\[
\VV(X)=\bigcup_{n\in\NN} A_n, \; \mbox{ where } A_n :=  [-n,n]x_1 +\cdots+[-n,n]x_n  + \VV_{X\setminus S}.
\]
Since $X$ is discrete, $A_n$ is closed, and hence $A_n$ is a meager closed subset of $\VV(X)$ for every $n\in \NN$. Therefore $\VV(X)$ is not Baire. Thus $X$ is  finite.

If $X$ is finite, then $\VV(X)=\IR^{|X|}$ is a Baire space.

(iii) follows from (ii).
\end{proof}

We shall identify elements $\delta(x)\in L(X)$ with the Dirac measure $\delta_x$ on $X$. So for every element $\mu= a_1 x_1 +\cdots + a_n x_n$ of $L(X)$ with distinct $x_1,\dots,x_n$ we can define the norm of $\mu$ setting
\[
\|\mu\|:=|a_1|+\cdots +|a_n|.
\]
We need the following lemma whose proof  actually can be extracted from the proof of Lemma 10.11.3 of \cite{Banakh-Survey}. Recall that a subset $M$ of a topological vector space $E$ is called {\em bounded} if for every neighborhood $U$ of zero there is $\lambda >0$ such that $M\subseteq \lambda U$.
\begin{lemma} \label{l:Free-bounded-L(X)}
Let $X$ be a Tychonoff space and $M$ a bounded subset of $L(X)$. Then the set $\{ \| \mu\|: \mu\in M\}$ is bounded in $\IR$.
\end{lemma}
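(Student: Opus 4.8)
The plan is to recognise the norm $\|\mu\|$ as the operator norm of a functional naturally attached to $\mu$ on the Banach space $C_b(X)$ of bounded continuous real-valued functions on $X$, and then to invoke the classical Uniform Boundedness Principle. Recall that, by the universal property of $L(X)$, each $f\in C(X)$ lifts to a continuous linear functional $\bar f:L(X)\to\IR$ with $\bar f\big(\sum_i a_i x_i\big)=\sum_i a_i f(x_i)$. For $\mu=\sum_{i=1}^k a_i x_i\in L(X)$ (with the $x_i$ distinct and $a_i\neq 0$) I would therefore define $\Lambda_\mu:C_b(X)\to\IR$ by $\Lambda_\mu(f):=\bar f(\mu)=\sum_i a_i f(x_i)$. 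Since $\supp(\mu)$ is finite, $\Lambda_\mu$ is a bounded linear functional on $(C_b(X),\|\cdot\|_\infty)$.

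The first key step is to verify that $\|\Lambda_\mu\|_{C_b(X)^*}=\|\mu\|$. The inequality $\le$ is immediate from $|\Lambda_\mu(f)|\le \sum_i|a_i|\,\|f\|_\infty=\|\mu\|\,\|f\|_\infty$. For the reverse inequality I would use that $X$ is Tychonoff: the finitely many distinct points $x_1,\dots,x_k$ admit pairwise disjoint open neighbourhoods, and complete regularity then yields functions $g_i:X\to[0,1]$ with $g_i(x_i)=1$ and $g_i$ vanishing off the $i$-th neighbourhood. Because these neighbourhoods are disjoint, $f:=\sum_i \mathrm{sign}(a_i)\,g_i$ lies in $C_b(X)$ with $\|f\|_\infty\le 1$ and $f(x_i)=\mathrm{sign}(a_i)$, so that $\Lambda_\mu(f)=\sum_i|a_i|=\|\mu\|$. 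Hence $\|\Lambda_\mu\|_{C_b(X)^*}=\|\mu\|$.

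The second key step is that the family $\{\Lambda_\mu:\mu\in M\}$ is pointwise bounded. Fixing $f\in C_b(X)\subseteq C(X)$, the lift $\bar f$ is a continuous linear map, so it sends the bounded set $M$ to a bounded subset of $\IR$; that is, $\sup_{\mu\in M}|\Lambda_\mu(f)|=\sup_{\mu\in M}|\bar f(\mu)|<\infty$. Since $C_b(X)$ with the supremum norm is complete, hence a Banach space, the Uniform Boundedness Principle yields $\sup_{\mu\in M}\|\Lambda_\mu\|_{C_b(X)^*}<\infty$, which by the norm identity of the previous step is precisely $\sup_{\mu\in M}\|\mu\|<\infty$, as required.

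The step I expect to demand the most care is the exact norm computation $\|\Lambda_\mu\|_{C_b(X)^*}=\|\mu\|$, since this is where complete regularity is genuinely used: one must produce a \emph{single} bounded continuous function realising the prescribed signs on the whole finite support simultaneously, which is why disjoint neighbourhoods are chosen before applying the Tychonoff separation property. I would stress that this functional-analytic route deliberately sidesteps the naive alternative of constructing, for an allegedly norm-unbounded sequence $\mu_n\in M$, one continuous $f$ with $\bar f(\mu_n)\to\infty$; such a direct construction is awkward precisely because the supports $\supp(\mu_n)$ may overlap and accumulate in $X$, whereas the Uniform Boundedness Principle converts the per-$f$ boundedness supplied by the hypothesis directly into the desired uniform bound, with no explicit separation of the $\mu_n$ needed.
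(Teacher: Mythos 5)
Your proof is correct, and it rests on the same analytic engine as the paper's: the Banach--Steinhaus theorem applied to the elements of $M$ viewed as continuous linear functionals on a Banach space of continuous functions. The difference lies in how that viewpoint is reached. The paper embeds $L(X)$ into $L(\beta X)$ via the Stone--\v{C}ech compactification, cites the structural fact (Flood, Uspenski\u{\i}) that $L(\beta X)$ sits inside $\CC(\CC(\beta X))$ so that its elements act as measures on $\beta X$, notes that the embedding ${\bar i}$ preserves the norm $\|\mu\|$, and then applies Banach--Steinhaus to the weak*-bounded set ${\bar i}(M)$ in the dual Banach space $\CC(\beta X)'$. You instead work directly on $C_b(X)$, define $\Lambda_\mu$ by hand, and prove the key identity $\|\Lambda_\mu\|_{C_b(X)^{*}}=\|\mu\|$ by an explicit Urysohn-type construction (pairwise disjoint neighbourhoods of the finitely many support points, plus complete regularity to realise the prescribed signs); pointwise boundedness of $\{\Lambda_\mu : \mu\in M\}$ comes, exactly as in the paper, from continuity of the lifts $\bar f$ on $L(X)$. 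Since $C_b(X)$ is isometrically $C(\beta X)$, the two arguments are essentially isomorphic, but yours buys self-containedness: it needs nothing about $L(\beta X)$ beyond the universal property of $L(X)$, at the modest cost of the explicit norm computation that the paper gets for free from the cited embedding and norm-preservation of ${\bar i}$. Both routes correctly reduce the lemma to the uniform boundedness principle.
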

\begin{proof}
Let $i:X \to \beta X$ be the identity inclusion of $X$ into the Stone--\v{C}ech compactification $\beta X$ of $X$, and let ${\bar i} : L(X)\to L(\beta X)$ be an injective linear continuous extension of $i$. Since $L(\beta X)$ is a subspace of $\CC(\CC(\beta X))$ (see \cite{Flo2,Usp2}) %by Theorem \ref{t:Ascoli-free-map}
and the map ${\bar i}$ preserves the norm of measures $\mu\in L(X)$, it follows that ${\bar i}(M)$ is a bounded subset of the dual Banach space $\CC(\beta X)'$ endowed with the weak* topology. Now the Banach--Steinhaus theorem \cite[3.88]{fabian-10} implies that $M$ is bounded.
\end{proof}

\begin{theorem} \label{t:Free-L(X)-quasibarrelled}
For a Tychonoff space  $X$, the following assertions are equivalent:
\begin{enumerate}
\item[{\rm (i)}] $X$ is discrete;
\item[{\rm (ii)}] $L(X)$ is barrelled;
\item[{\rm (iii)}] $L(X)$ is quasibarrelled.
\end{enumerate}
\end{theorem}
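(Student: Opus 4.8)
The plan is to prove the cycle $(\mathrm{ii})\Rightarrow(\mathrm{iii})\Rightarrow(\mathrm{i})\Rightarrow(\mathrm{ii})$. The implication $(\mathrm{ii})\Rightarrow(\mathrm{iii})$ is immediate, since every bornivorous barrel is a barrel, so a barrelled space is quasibarrelled. The whole content lies in $(\mathrm{iii})\Rightarrow(\mathrm{i})$, where Lemma~\ref{l:Free-bounded-L(X)} does the essential work; the remaining implication $(\mathrm{i})\Rightarrow(\mathrm{ii})$ I would reduce to Theorem~\ref{t:Free-barrel}(i) through the locally convex modification functor $\mathcal{L}$.

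For $(\mathrm{i})\Rightarrow(\mathrm{ii})$: if $X$ is discrete then $\VV(X)$ is barrelled by Theorem~\ref{t:Free-barrel}(i), so it suffices to show that the locally convex modification of a barrelled tvs is barrelled, applied to $\mathcal{L}(\VV(X))=L(X)$ via Proposition~\ref{p:Free-tvs-lcs}(i). Writing $\VV(X)=(\VV_X,\pmb{\mu}_X)$ and $L(X)=(\VV_X,\pmb{\nu}_X)$ with $\pmb{\nu}_X\le\pmb{\mu}_X$, any barrel $B$ of $L(X)$ is $\pmb{\nu}_X$-closed, hence $\pmb{\mu}_X$-closed, and therefore is a barrel of $\VV(X)$; by barrelledness of $\VV(X)$ it is a $\pmb{\mu}_X$-neighborhood of zero. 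Being absolutely convex, $B$ contains $\conv(U)$ for some $\pmb{\mu}_X$-neighborhood $U$ of zero, and $\conv(U)$ is a basic $\pmb{\nu}_X$-neighborhood by the definition of the locally convex modification; hence $B$ is a $\pmb{\nu}_X$-neighborhood of zero.

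For $(\mathrm{iii})\Rightarrow(\mathrm{i})$ I argue by contraposition, using the standard characterization that a locally convex space $E$ is quasibarrelled if and only if every $\beta(E',E)$-bounded (strongly bounded) subset of $E'$ is equicontinuous; equivalently, I exhibit a bornivorous barrel that is not a neighborhood of zero. Suppose $X$ is not discrete and fix a non-isolated point $x_0$ together with a net $\{x_\iota\}_{\iota\in I}$ in $X\setminus\{x_0\}$ converging to $x_0$. As $X$ is Tychonoff, choose for each $\iota$ a function $f_\iota\in C(X)$ with $0\le f_\iota\le 1$, $f_\iota(x_0)=0$ and $f_\iota(x_\iota)=1$, and let $\phi_\iota\in L(X)'$ be determined by $\phi_\iota\big(\sum_k a_k z_k\big)=\sum_k a_k f_\iota(z_k)$. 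Since $\lvert\phi_\iota(\mu)\rvert\le\|f_\iota\|_\infty\,\|\mu\|\le\|\mu\|$ for every $\mu$, and since by Lemma~\ref{l:Free-bounded-L(X)} the norms $\|\mu\|$ are uniformly bounded on each bounded subset of $L(X)$, the family $H:=\{\phi_\iota:\iota\in I\}$ is strongly bounded. On the other hand $H$ is not equicontinuous: if some neighborhood $W$ of zero satisfied $\lvert\phi_\iota\rvert\le 1$ on $W$ for all $\iota$, then, because the inclusion $X\hookrightarrow L(X)$ and the vector operations are continuous, the net $x_\iota-x_0$ would converge to zero in $L(X)$ and so would eventually lie in $\tfrac12 W$, forcing $\lvert\phi_\iota(x_\iota-x_0)\rvert\le\tfrac12$; but $\phi_\iota(x_\iota-x_0)=f_\iota(x_\iota)-f_\iota(x_0)=1$, a contradiction. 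Hence $L(X)$ is not quasibarrelled.

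The main obstacle is precisely $(\mathrm{iii})\Rightarrow(\mathrm{i})$, and within it the verification that $H$ is strongly bounded: this is exactly where Lemma~\ref{l:Free-bounded-L(X)} is indispensable, since the sup-norm bound on the $f_\iota$ controls $\phi_\iota$ only through the measure norm $\|\cdot\|$, which in general need not be bounded on bounded subsets of $L(X)$. I would also be careful to work with a net rather than a sequence, since a non-isolated point of a general Tychonoff space need not be the limit of any nontrivial convergent sequence; the equicontinuity argument above is insensitive to this and goes through verbatim for nets.
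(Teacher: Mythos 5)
Your proof is correct, and the key implication is handled by a genuinely different argument from the paper's. Your (i)$\Rightarrow$(ii) and (ii)$\Rightarrow$(iii) coincide with the paper's: it too views a barrel $B$ of $L(X)$ as a barrel of $\VV(X)$, invokes Theorem~\ref{t:Free-barrel}(i), and pushes the neighborhood property back through Proposition~\ref{p:Free-tvs-lcs}(i) using $\conv(B)=B$. The difference is in (iii)$\Rightarrow$(i). The paper works on the primal side: it recycles the explicit set $A$ and its absolutely convex hull $B$ from the proof of Theorem~\ref{t:Free-barrel}(i), observes that $B$ fails to be a $\pmb{\mu}_X$-neighborhood of zero (hence also fails for $\pmb{\nu}_X$), and uses Lemma~\ref{l:Free-bounded-L(X)} to check that $B$ is bornivorous, so $B$ itself witnesses the failure of quasibarrelledness. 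You work on the dual side: you build the family $H=\{\phi_\iota\}$ of functionals extending Urysohn-type functions separating $x_\iota$ from $x_0$, use Lemma~\ref{l:Free-bounded-L(X)} to verify that $H$ is $\beta\big(L(X)',L(X)\big)$-bounded, and show $H$ is not equicontinuous by evaluating on the null net $x_\iota-x_0$, then invoke the standard dual characterization of quasibarrelledness. The two arguments are polar to one another --- the polar $H^{\circ}$ is precisely a bornivorous barrel that is not a neighborhood of zero --- but yours has the advantage of depending only on the \emph{statement} of Theorem~\ref{t:Free-barrel}(i) (and only for (i)$\Rightarrow$(ii)), whereas the paper's (iii)$\Rightarrow$(i) must cite the internals of that theorem's proof to know its barrel is not a neighborhood of zero. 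Conversely, the paper's version is more self-contained in that it avoids importing the equicontinuity characterization of quasibarrelled spaces and reuses a construction already at hand. Both proofs hinge on the same essential ingredient, Lemma~\ref{l:Free-bounded-L(X)}, and your identification of it as the crux matches the paper exactly.
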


\begin{proof}
(i)$\Rightarrow$(ii) Let $B$ be a barrel in $L(X)$. Then $B$ is a neighborhood of zero in $\VV(X)$ by Theorem \ref{t:Free-barrel}. Since $\conv(B)=B$,  Proposition \ref{p:Free-tvs-lcs} implies that $B$ is a neighborhood of zero in $L(X)$. Thus $L(X)$ is barrelled.
(ii)$\Rightarrow$(iii) is clear. Let us prove (iii)$\Rightarrow$(i).

Suppose for a contradiction that $X$ is not discrete and $x_0\in X$ is a non-isolated point of $X$. As in the proof of (i) of Theorem \ref{t:Free-barrel}, take a net $N=\{ x_i\}_{i\in I}$ in $X$ converging to $x_0$ (we assume that $x_0\not\in N$) and set
\[
A:=\bigcup \left\{ \left[ -\frac{1}{2},\frac{1}{2}\right]x_i : \; i\in I \right\} \cup \bigcup \big\{ [-1,1]x :\: x\in X\setminus N \big\} \subset\VV_X,
\]
and let $B$ be the absolute convex hull of $A$. Then $B$ is a barrel in $L(X)$ which is not a neighbourhood of zero even in $\VV(X)$ by the proof of (i) of Theorem \ref{t:Free-barrel}. The construction of $B$ and Lemma \ref{l:Free-bounded-L(X)} imply that $B$ is bornivorous. Thus $L(X)$ is not quasibarrelled. This contradiction shows that $X$ is discrete.
\end{proof}
\begin{corollary}
Let $X$ be a Tychonoff space. Then $L(X)$ is a Baire space if and only if $X$ is finite.
\end{corollary}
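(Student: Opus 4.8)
The plan is to establish the two implications separately, disposing of the easy direction first and then reducing the converse entirely to results already proved in the paper. The trivial direction is immediate: if $X$ is finite, then $X$ is discrete and $\VV(X)=L(X)=\IR^{|X|}$ is a finite-dimensional Banach space, hence a complete metric space and therefore a Baire space.

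For the converse, suppose $L(X)$ is a Baire space. The first step is to observe that a Baire locally convex space is automatically barrelled. Indeed, a Baire lcs is Baire-like (as already noted before Theorem \ref{t:Free-barrel}), and any Baire-like space is barrelled: given a barrel $B$, the sequence $\{nB\}_{n\in\NN}$ is an increasing sequence of absolutely convex closed sets covering $L(X)$, so by the Baire-like property some $nB$, and hence $B$ itself, is a neighborhood of zero. Consequently $L(X)$ is barrelled, and Theorem \ref{t:Free-L(X)-quasibarrelled} forces $X$ to be discrete. Note that this single observation already handles every non-discrete $X$ at once.

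It then remains to exclude the case that $X$ is discrete and \emph{infinite}. Here I would extract a countably infinite subset $D=\{x_n:n\in\NN\}$ of $X$; since $X$ is discrete, $D$ is clopen, hence a retract of $X$, so the $L$-analogue of Corollary \ref{c:Free-retract} (equivalently, the fact that the free lcs functor preserves coproducts) yields a topological direct-sum decomposition $L(X)=L(X\setminus D)\oplus L(D)$, where $L(D)=L(\NN)=\phi$ by Theorem \ref{t:Free-lcs}. The canonical projection $p:L(X)\to L(D)$ is a continuous, linear, open surjection, and continuous open surjective images of Baire spaces are Baire; hence $\phi$ would have to be a Baire space. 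But $\phi=\VV(\NN)$ is \emph{not} Baire by Theorem \ref{t:Free-barrel}(iii) (concretely, $\phi=\bigcup_n\IR^n$ is the union of the closed nowhere-dense subspaces $\IR^n$ and so is meager in itself). This contradiction shows $X$ must be finite, completing the proof.

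The only genuinely nontrivial point, and the one I would take care with, is the inheritance of the Baire property by the complemented copy of $\phi$. I expect to justify it through the openness of the canonical projection onto a topological direct summand together with the standard preservation of the Baire property under continuous open surjections; the decomposition of $L(X)$ and the identification $L(D)=\phi$ are then routine consequences of the locally convex analogues of the Section~2 results and of Theorem \ref{t:Free-lcs}.
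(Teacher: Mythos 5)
Your proof is correct, and its first half is exactly the paper's own argument: the paper likewise passes from Baire to barrelled (citing \cite[Theorem 11.8.6]{NaB} rather than running the Baire-like argument you spell out, which is the same content) and then applies Theorem \ref{t:Free-L(X)-quasibarrelled} to conclude that $X$ is discrete. Where you genuinely diverge is in ruling out infinite discrete $X$. The paper invokes ``the proof of'' Theorem \ref{t:Free-barrel}(ii): for a sequence $S=\{x_n\}_{n\in\NN}$ of distinct points of $X$, the sets $A_n=[-n,n]x_1+\cdots+[-n,n]x_n+\VV_{X\setminus S}$ cover $L(X)$ by closed nowhere dense sets, so $L(X)$ is meager in itself. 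Transferring that argument from $\VV(X)$ to $L(X)$ hides a small point the paper leaves implicit: the $A_n$ were shown closed in the finer topology $\pmb{\mu}_X$, and one needs them closed in $\pmb{\nu}_X$; this does hold, because the $A_n$ are absolutely convex and the two topologies are compatible (Proposition \ref{p:Free-compatible}), so their closed convex sets coincide. Your route instead splits off a complemented copy of $\phi=L(\NN)$ via the $L$-analogue of Corollary \ref{c:Free-retract} --- not stated in the paper, but valid, since the free lcs functor is a left adjoint and so preserves coproducts, and a coproduct of two locally convex spaces carries the product topology --- and then transports the Baire property through the open canonical projection onto $\phi$, which is meager in itself as $\bigcup_n\IR^n$. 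What your approach buys is that the non-Baireness computation is done once, intrinsically, in $\phi$, sidestepping the closedness-in-$\pmb{\nu}_X$ subtlety entirely; the cost is the extra (standard but unproved-in-the-paper) ingredients of the direct-sum decomposition of $L(X)$ and the preservation of Baireness under continuous open surjections.
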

\begin{proof}
Assume that $L(X)$ is Baire. Then $L(X)$ is a barrelled space by \cite[Theorem 11.8.6]{NaB}. Therefore $X$ is discrete by Theorem \ref{t:Free-L(X)-quasibarrelled}. Thus $X$ is finite by (the proof of) Theorem \ref{t:Free-barrel}(ii). The converse assertion is trivial.
\end{proof}

%%%%%%%%%%%%%%%%%%%%%%%%%%%
%%%%%%%%%%%%%%%%%%%%%%%%%%%
%%%%%%%%%%%%%%%%%%%%%%%%%%%
%%%%%%%%%%%%%%%%%%%%%%%%%%%

%%%%%%%%%%%%%%%%%%%%%%%%%%%
%%%%%%%%%%%%%%%%%%%%%%%%%%%
%%%%%%%%%%%%%%%%%%%%%%%%%%%
%%%%%%%%%%%%%%%%%%%%%%%%%%%

\section{Acknowledgments}

The second author gratefully acknowledges the financial support of the research Center for Advanced Studies in Mathematics of the Ben-Gurion University of the Negev.

\bibliographystyle{amsplain}

\end{document}